\documentclass[11pt]{amsart}


\usepackage{amsthm}
\usepackage{amsmath}
\usepackage{amssymb}
\usepackage{enumerate}
\usepackage{overpic}
\usepackage{color}
\usepackage[dvipsnames]{xcolor}
\usepackage{tikz-cd}
\usepackage{microtype}
\usepackage[hidelinks,pagebackref,pdftex]{hyperref}


\setlength{\textwidth}{6.1in}
\setlength{\textheight}{8.5in}
\setlength{\topmargin}{0in}
\setlength{\oddsidemargin}{.25in}
\setlength{\evensidemargin}{.25in}

\AtBeginDocument{%
   \def\MR#1{}
}


\usepackage{marginnote}
\long\def\@savemarbox#1#2{\global\setbox#1\vtop{\hsize\marginparwidth 
  \@parboxrestore\tiny\raggedright #2}}
\marginparwidth .75in \marginparsep 7pt 

\renewcommand*{\backref}[1]{}
\renewcommand*{\backrefalt}[4]{
  \ifcase #1
  [No citations.]
  \or [#2]
  \else [#2]
  \fi }

\numberwithin{equation}{section}
\theoremstyle{plain}
\newtheorem{theorem}[equation]{Theorem}

\newtheorem{lemma}[equation]{Lemma}

\newtheorem{proposition}[equation]{Proposition}

\newtheorem*{namedtheorem}{\theoremname}
\newcommand{\theoremname}{testing}

\theoremstyle{definition}
\newtheorem{definition}[equation]{Definition}
\newtheorem{remark}[equation]{Remark}

\newtheorem{example}[equation]{Example}

\newcommand{\from}{\colon} 

\newcommand{\HH}{{\mathbb{H}}}
\newcommand{\RR}{{\mathbb{R}}}
\newcommand{\ZZ}{{\mathbb{Z}}}
\newcommand{\NN}{{\mathbb{N}}}
\newcommand{\CC}{{\mathbb{C}}}

\newcommand{\PP}{{\mathbb{P}}}

\newcommand{\refthm}[1]{Theorem~\ref{Thm:#1}}
\newcommand{\reflem}[1]{Lemma~\ref{Lem:#1}}
\newcommand{\refprop}[1]{Proposition~\ref{Prop:#1}}

\newcommand{\refrem}[1]{Remark~\ref{Rem:#1}}

\newcommand{\refeqn}[1]{\eqref{Eqn:#1}}

\newcommand{\refdef}[1]{Definition~\ref{Def:#1}}
\newcommand{\refsec}[1]{Section~\ref{Sec:#1}}
\newcommand{\reffig}[1]{Figure~\ref{Fig:#1}}
\newcommand{\refexa}[1]{Example~\ref{Exa:#1}}

\newcommand{\bdy}{\partial}

\newcommand{\Real}{\operatorname{Re}}
\newcommand{\Imag}{\operatorname{Im}}

\newcommand{\vol}{\operatorname{vol}}

\newcommand{\area}{\operatorname{area}}

\newcommand{\smod}{{ \!\!\! \mod{} }}

\newcommand{\rmin}{{r_{\min}}}
\newcommand{\rmax}{{r_{\max}}}


\newcommand{\trad}{{\operatorname{trad}}}

\newcommand{\injrad}{{\operatorname{injrad}}}

\newcommand{\Hhat}{{\hat{\HH}^3}}
\newcommand{\Isom}{{\operatorname{Isom}}}

\DeclareMathOperator{\arccosh}{arccosh}
\DeclareMathOperator{\arcsinh}{arcsinh}

\renewcommand{\setminus}{-}

\title{Effective distance between nested Margulis tubes}

\author[D.~Futer]{David Futer}
\address[]{Department of Mathematics, Temple University,
Philadelphia, PA 19122, USA}
\email[]{dfuter@temple.edu}

\author[J.~Purcell]{Jessica S.~Purcell}
\address[]{School of Mathematical Sciences, Monash University, VIC 3800, Australia }
\email[]{jessica.purcell@monash.edu}

\author[S.~Schleimer]{Saul Schleimer}
\address[]{Department of Mathematics, 
University of Warwick, Coventry CV4 7AL, UK}
\email[]{s.schleimer@warwick.ac.uk}

\subjclass[2010]{57M50, 30F40}

\begin{document}

\begin{abstract}
We give sharp, effective bounds on the distance between tori of fixed injectivity radius inside a Margulis tube in a hyperbolic $3$--manifold. 
\end{abstract}

\maketitle

\section{Introduction}
\label{Sec:Intro}

A key tool in the study of hyperbolic manifolds is the \emph{thick-thin decomposition}.  For any number $\epsilon > 0$, a manifold $M$ is decomposed into the \emph{$\epsilon$--thin} part, consisting of points on essential loops of length less than $\epsilon$, and its complement the \emph{$\epsilon$--thick} part.  Margulis proved the foundational result that there is a universal constant $\epsilon_3 > 0$ such that for any hyperbolic $3$--manifold $M$, the  $\epsilon_3$--thin part is a disjoint union of cusps and tubes.  Analogous statements hold in all dimensions, and for more general symmetric spaces.  This result has had numerous important consequences in the study and classification of hyperbolic $3$--manifolds and Kleinian groups. 
Thurston and Jorgensen used the Margulis lemma to describe the structure of the set of volumes of hyperbolic manifolds, with limit points occurring only via Dehn filling~\cite{thurston:notes}. 
The Margulis lemma also plays a major role in the construction of model manifolds used in the proofs of the Ending Lamination Theorem~\cite{minsky:punctured-tori, brock-canary-minsky:elc} and the Density Theorem~\cite{namazi-souto:density, Ohshika05}.

Since tubes and cusps are well-understood quotients of hyperbolic space by elementary groups, it seems that the thin parts of manifolds should be easy to analyze.  However, in practice, the thin parts of a manifold are often very difficult to control. For example, the optimal value for the Margulis constant $\epsilon_3$ is still unknown.  The best known estimate is 
due to Meyerhoff~\cite{meyerhoff}.  Additionally, given an $\epsilon$--thin tube, it is very difficult to analyze and bound simple quantities such as the radius of the tube in full generality.  This is because the radius depends not only on $\epsilon$, but also on the rotation and translation --- the \emph{complex length} --- at the core of the tube.  Although the radius is a continuous function of these parameters, it is non-differentiable in many places.  See \refprop{TubeRadFormula} for a formula, and \reffig{Juarez} for a graph.

In this paper, we address the geometry of thin parts of (possibly singular) hyperbolic $3$--manifolds. Given $0 < \delta < \epsilon$ and a tube of injectivity radius $\epsilon/2$ (the $\epsilon$--tube, for short), we determine sharp, effective bounds on the distance between the boundaries of the $\epsilon$--tube and the $\delta$--tube. These bounds are independent of the complex length $\lambda + i\tau$ of the core of the tube. Our main result is  the following.

\begin{theorem}
\label{Thm:EffectiveDistTubes}
Suppose that $0 < \delta < \epsilon \leq 0.3$.  Let $N = N_{\alpha, \lambda, \tau}$ be a hyperbolic solid torus whose core geodesic has complex length $\lambda + i\tau$ and cone angle $\alpha \leq 2\pi$, where $\lambda \leq \delta$. 
Then the distance $d_{\alpha, \lambda, \tau}(\delta,\epsilon)$ between the $\delta$ and $\epsilon$ tubes satisfies
\[
\max \left\{ \frac{\epsilon - \delta}{2}, \, 
\arccosh \frac{\epsilon}{\sqrt{7.256 \, \delta}} - 0.0424 \right\}
\leq
d_{\alpha, \lambda, \tau}(\delta,\epsilon)
\leq
\arccosh \sqrt{ \frac{\cosh \epsilon - 1}{\cosh \delta - 1}  }.
\]
\end{theorem}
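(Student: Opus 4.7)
The plan is to exploit Fermi coordinates $(r, \theta, z)$ about the core. An essential loop in the homotopy class $(m, n) \in \ZZ^2 \setminus \{(0,0)\}$ has length $\ell_{m,n}(r)$ at Fermi radius $r$ satisfying the hyperbolic distance formula
\[
\cosh \ell_{m,n}(r) = \cosh^2 r \cdot \cosh(m\lambda) - \sinh^2 r \cdot \cos(m\tau + n\alpha),
\]
equivalently (after $\cosh x - 1 = 2\sinh^2(x/2)$ manipulation)
\[
\sinh^2\!\bigl(\ell_{m,n}(r)/2\bigr) = \sinh^2(m\lambda/2) + \half K_{m,n} \sinh^2 r,
\]
where $K_{m,n} = \cosh(m\lambda) - \cos(m\tau + n\alpha) \geq 0$. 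The systole $\ell(r) = \min_{(m,n)} \ell_{m,n}(r)$ is a continuous increasing function of $r$; define $r_\delta, r_\epsilon$ by $\ell(r_\delta)=\delta$, $\ell(r_\epsilon)=\epsilon$, so that $d_{\alpha,\lambda,\tau}(\delta,\epsilon) = r_\epsilon - r_\delta$.

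The trivial lower bound $(\epsilon - \delta)/2$ is immediate: the injectivity radius is $1$-Lipschitz on $N$, since transporting the shortest loop at one point to another yields a loop of length exceeding the original by at most twice the transport distance.

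For the upper bound, I would show that $\sinh^2(\ell_{m,n}(r)/2)/\cosh^2 r$ is non-decreasing in $r$ for every $(m,n)$. Differentiating the identity above and using $K_{m,n} - 2\sinh^2(m\lambda/2) = 1 - \cos(m\tau + n\alpha) = 2\sin^2((m\tau+n\alpha)/2)$ gives
\[
\frac{d}{dr}\!\left(\frac{\sinh^2(\ell_{m,n}(r)/2)}{\cosh^2 r}\right) = \frac{2\sinh r \cdot \sin^2\!((m\tau + n\alpha)/2)}{\cosh^3 r} \geq 0.
\]
Since $\ell_{m,n}(r_\delta) \geq \delta$ for every $(m,n)$, taking the minimum yields $\sinh(\ell(r)/2) \geq \sinh(\delta/2) \cdot \cosh r / \cosh r_\delta$ for all $r \geq r_\delta$. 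Setting $r = r_\delta + D$ with $D = \arccosh(\sinh(\epsilon/2)/\sinh(\delta/2))$, the elementary inequality $\cosh(r_\delta + D)/\cosh r_\delta \geq \cosh D$ forces $\ell(r_\delta + D) \geq \epsilon$, hence $r_\epsilon \leq r_\delta + D$. This matches the stated bound since $\sinh(\epsilon/2)/\sinh(\delta/2) = \sqrt{(\cosh\epsilon - 1)/(\cosh\delta - 1)}$.

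For the non-trivial lower bound, I would apply Hermite's inequality to the flat cross-sectional torus at radius $r$, which has area $\alpha\lambda \sinh r \cosh r$: its Euclidean systole (an upper bound on $\ell(r)$, since $3$-dimensional geodesics can only shorten representatives) satisfies $\ell(r)^2 \leq (2/\sqrt{3})\cdot\mathrm{area}$. Evaluating at $r_\epsilon$ and using $\lambda \leq \delta$, $\alpha \leq 2\pi$, and $\sinh r_\epsilon \leq \cosh r_\epsilon$ gives $\cosh^2 r_\epsilon \geq \epsilon^2 \sqrt{3}/(4\pi\delta)$, and because $4\pi/\sqrt{3} \approx 7.256$ we obtain $r_\epsilon \geq \arccosh(\epsilon/\sqrt{7.256\delta})$. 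The bound $d \geq \arccosh(\epsilon/\sqrt{7.256\delta}) - 0.0424$ then follows from an upper estimate $r_\delta \leq 0.0424$ valid in the regime where the $\arccosh$ term is positive. The main obstacle is producing this clean additive constant: it requires a careful case analysis of which homotopy class $(m,n)$ realizes the systole at $r_\delta$ (the identity $\sinh^2(\delta/2) = \sinh^2(m\lambda/2) + \half K_{m,n}\sinh^2 r_\delta$ controls $r_\delta$ in terms of $K_{m,n}$), together with the constraints $\delta < \epsilon \leq 0.3$ and $\lambda \leq \delta$ to extract a uniform numeric bound.
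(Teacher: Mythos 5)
Your treatment of the trivial lower bound and the upper bound is sound. In fact, the upper bound argument is slightly cleaner than the paper's: by showing $\sinh^2(\ell_{m,n}(r)/2)/\cosh^2 r$ is non-decreasing in $r$ for every class $(m,n)$ simultaneously, you bypass the case analysis (loxodromic versus elliptic realizing isometry, $\alpha < \pi$ versus $\alpha \geq \pi$) that the paper carries out in \refprop{MultiplicativeRadGap}. The inequality $\cosh(r_\delta + D)/\cosh r_\delta \geq \cosh D$ is exactly the paper's \reflem{CoshMult}, so the endgame is the same. This part is correct.

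The non-trivial lower bound has two genuine gaps. First, the Hermite/packing argument on the flat torus $T_{r}$ does not work for nonsingular tubes ($\alpha = 2\pi$). In that case the meridian of $T_r$ (rotation by $2\pi$) is nullhomotopic in $N$, so it does \emph{not} contribute to $\ell(r)$, yet it is a lattice vector of Euclidean length $2\pi\sinh r$. When $\sinh r$ is small, the Euclidean systole of $T_r$ is realized by a meridian, and the inequality $\ell(r) \leq \operatorname{sys}_E(T_r)$ fails. (Equivalently, the $\epsilon/2$-disk used in the packing argument is not embedded in the cylinder $\widetilde T_r \subset \HH^3$ once $\epsilon > 2\pi\sinh r$.) This is precisely the obstruction that the paper handles with the Zagier/Cao--Gehring--Martin lemma (\reflem{ZagierCGM}), a Diophantine statement guaranteeing some $m \geq 1$ with $\cosh(m\lambda) - \cos(m\tau) \leq 2\pi\lambda/\sqrt{3}$, i.e., an \emph{essential} class whose lattice vector is Hermite-short. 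Your approach as written proves \refprop{TubeRadBound} only when $\alpha < 2\pi$.

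Second, the claim that $r_\delta \leq 0.0424$ in the regime where $\arccosh(\epsilon/\sqrt{7.256\,\delta}) > 0$ is false. That regime only constrains $\delta \leq \epsilon^2/7.256 \lesssim 0.0124$; it places no upper bound on $r(\delta)$, which can be arbitrarily large when $\lambda \ll \delta$. The paper splits into a shallow case ($r(\delta) \leq 0.0424$), where your approach works verbatim, and a deep case ($r(\delta) \geq 0.0424$), which requires a genuinely different argument (\reflem{LowerBoundDeep}): one compares Euclidean and hyperbolic lengths on the boundary tori (\reflem{EucInjectivity}) and uses the exponential growth of length under radial projection (\reflem{TorusProjection}) to bound $r(\epsilon) - r(\delta) \gtrsim \log(\epsilon/\delta)$; a final calculus estimate (\reflem{ComparisonFunction}) shows this matches or beats $\arccosh(\epsilon/\sqrt{7.256\,\delta}) - 0.0424$. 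Without the deep-case argument, your bound does not hold when $r(\delta)$ is large.
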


We remark that the argument of $\arccosh$ in the lower bound of \refthm{EffectiveDistTubes} may be less than $1$, hence $\arccosh(\cdot)$ is undefined. To remedy this, we employ the convention that an undefined value does not realize the maximum.  The real point is that the lower bound is not very strong (less than $\epsilon/2$) for any pair $(\delta, \epsilon)$ such that $\epsilon < \sqrt{ 7.256 \, \delta } $. On the other hand, the lower bound of \refthm{EffectiveDistTubes} is sharp up to additive error for any pair $( \delta, \epsilon)$ where $\epsilon \geq \sqrt{ 7.256 \, \delta } $. The upper bound is sharp for every pair $( \delta, \epsilon)$. See \refsec{TubeDistSharpness} 
for more details.

\subsection{Motivation and applications}
\label{Sec:Motivation}

Ineffective bounds on the distances between thin tubes were previously observed by Minsky \cite[Lemma 6.1]{minsky:punctured-tori} and Brock--Bromberg~\cite[Theorem~6.9]{brock-bromberg:density}, who credit the bound to Brooks and Matelski~\cite{brooks-matelski}. Universal bounds of this sort, depending only on $\delta$ and $\epsilon$, are required in the proof of the Ending Lamination Theorem, both for punctured tori \cite{minsky:punctured-tori} and for general surfaces \cite{minsky:models-bounds, brock-canary-minsky:elc}. In particular, Minsky used such bounds in the proof of the ``a priori bounds'' theorem \cite{minsky:models-bounds} that curves appearing in a hierarchy have universally bounded length. One consequence of ``a priori bounds'' is Brock's volume estimate for quasifuchsian manifolds and for mapping tori \cite{brock:fibered, brock:quasifuchsian}. A second consequence is the result (due to Minsky, Bowditch, and Brock--Bromberg \cite{bowditch:elc, brock-bromberg:inflexibility}) that  distance in the curve complex  of a surface $S$ is coarsely comparable to electric distance in a $3$--manifold of the form $S \times \RR$.

In a slightly different direction, Brock and Bromberg applied the ineffective bounds on distances between tubes to cone-manifolds, establishing uniform bilipschitz estimates between the thick part of a cusped $3$--manifold and the thick parts of its long Dehn fillings \cite{brock-bromberg:density}. This application requires a version of \refthm{EffectiveDistTubes} for solid tori with a cone-singularity at the core.
In turn, the Brock--Bromberg result has been combined with the Ending Lamination Theorem to show that geometrically finite hyperbolic $3$--manifolds are dense in the space of all (tame) hyperbolic $3$--manifolds~\cite{namazi-souto:density, Ohshika05}.

The past few years have witnessed an intense effort to make theorems in coarse geometry \emph{effective}, that is, to make the constants explicit. Recent effective results include, for instance, \cite{ATW:DistanceFormula, futer-schleimer:cusp-geometry, HPW:SlimUnicorns}. Finding an effective version of the distance between thin tubes has been a major obstacle to extending those efforts. \refthm{EffectiveDistTubes} provides such an effective result.

\refthm{EffectiveDistTubes} is already being applied to obtain effective versions of several results mentioned above. Futer and Taylor have outlined an effective ``a priori bounds'' theorem, combining \refthm{EffectiveDistTubes} with sweepout arguments \cite{futer-schleimer:cusp-geometry} and effective results about hierarchies \cite{ATW:DistanceFormula}. Aougab, Patel, and Taylor have found an effective ``electric distance'' theorem, again using a combination of \refthm{EffectiveDistTubes} and sweepout arguments. Finally, the authors of this paper have used \refthm{EffectiveDistTubes} in combination with a number of cone-manifold estimates due to Hodgson and Kerckhoff \cite{hk:univ, hk:shape} to effectivize the Brock--Bromberg bilipschitz theorem \cite{FPS:EffectiveBilipschitz}. Our effective results on cone-manifolds require \refthm{EffectiveDistTubes} to hold for singular solid tori.

Finally, \refthm{EffectiveDistTubes} offers a useful step toward finding the Margulis constant $\epsilon_3$. 
The current state of  knowledge is that $0.104 \leq \epsilon_3 \leq 0.775$, with the lower bound due to Meyerhoff \cite{meyerhoff} and the upper bound realized by the Weeks manifold.
  Furthermore, a theorem of Shalen \cite{shalen:margulis-numbers, Shalen:SmallOptimalMargulis}, building on earlier work of Culler and Shalen \cite{culler-shalen:margulis}, says that $0.29$ is a Margulis number for all but finitely many hyperbolic $3$--manifolds. That is, for all but finitely many choices of $M$, the $0.29$--thin part of $M$ is a disjoint union of cusps and tubes. Any manifold $M$ failing this property must be closed and must have $\vol(M) < 52.8$. By combining \refthm{EffectiveDistTubes} with our work on cone-manifolds, we produce an explicit lower bound on the injectivity radius of any exception to Shalen's theorem. 
  This makes it theoretically feasible (although computationally impractical) to enumerate all manifolds with $\vol(M) < 52.8$ and injectivity radius bounded below, and to determine their Margulis numbers  \cite{FPS:EffectiveBilipschitz}. 
  
%
%

\subsection{Distance between tubes, as a function}

Let $0 < \delta \leq \epsilon$ be injectivity radii, and consider a solid torus $N = N_{\alpha, \lambda, \tau}$ whose core geodesic has complex length $\lambda + i\tau$ and cone angle $\alpha \leq 2\pi$.  The distance between the $\epsilon$-- and $\delta$--tubes in $N$ is defined carefully in \refdef{TubeRad}.  We denote this distance by $d_{\alpha, \lambda,\tau}(\delta,\epsilon)$.  For our current discussion, it helps to note that $d_{\alpha, \lambda,\tau}(\delta,\epsilon)$ is the difference of tube radii of the $\epsilon$--tube and $\delta$--tube, and that each tube radius is determined by taking a maximum of many smooth functions.  See \refprop{TubeRadFormula} for an exact formula.  As a consequence, $d_{\alpha,\lambda,\tau}(\delta, \epsilon)$ is a continuous but only piecewise smooth function of the quantities $\delta$, $\epsilon$, $\lambda$, and $\tau$.

\begin{figure}[htbp]
\begin{overpic}[width=5in]{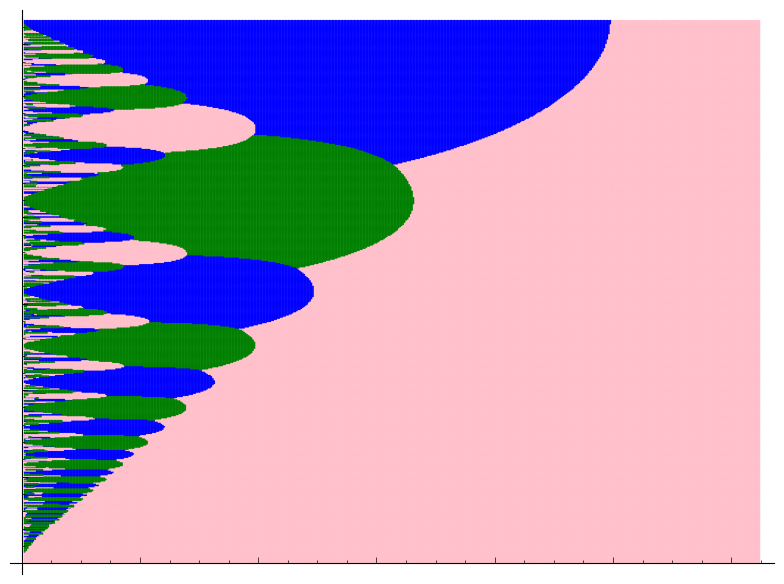}
\put(15.62,0.00){\scalebox{0.75}{$0.02$}}
\put(30.65,0.00){\scalebox{0.75}{$0.04$}}
\put(45.70,0.00){\scalebox{0.75}{$0.06$}}
\put(60.88,0.00){\scalebox{0.75}{$0.08$}}
\put(76.00,0.00){\scalebox{0.75}{$0.10$}}
\put(91.00,0.00){\scalebox{0.75}{$0.12$}}
\put(96,-0.5){$\lambda$}
\put(-1.0,12.80){\scalebox{0.75}{$0.5$}}
\put(-1.0,24.00){\scalebox{0.75}{$1.0$}}
\put(-1.0,34.80){\scalebox{0.75}{$1.5$}}
\put(-1.0,46.00){\scalebox{0.75}{$2.0$}}
\put(-1.0,57.00){\scalebox{0.75}{$2.5$}}
\put(-1.0,68.20){\scalebox{0.75}{$3.0$}}
\put(0,71.5){$\tau$}
\put(69.00,20.00){\scalebox{6.000}{\color{black}$1$}}
\put(48.50,60.50){\scalebox{3.000}{\color{white}$2$}}
\put(33.00,46.50){\scalebox{2.500}{\color{white}$3$}}
\put(27.00,36.00){\scalebox{1.500}{\color{white}$4$}}
\put(22.00,57.00){\scalebox{1.100}{\color{black}$5$}}
\put(22.00,29.50){\scalebox{1.100}{\color{white}$5$}}
\put(18.50,25.00){\scalebox{0.750}{\color{white}$6$}}
\put(17.00,61.75){\scalebox{0.500}{\color{white}$7$}}
\put(17.00,41.75){\scalebox{0.500}{\color{black}$7$}}
\put(17.00,22.00){\scalebox{0.500}{\color{white}$7$}}
\put(15.50,54.25){\scalebox{0.400}{\color{white}$8$}}
\put(15.50,19.65){\scalebox{0.400}{\color{white}$8$}}
\put(14.00,63.90){\scalebox{0.375}{\color{black}$9$}}
\put(14.00,33.10){\scalebox{0.375}{\color{black}$9$}}
\put(14.00,17.80){\scalebox{0.375}{\color{white}$9$}}
\put(12.30,44.00){\scalebox{0.275}{\color{white}$10$}}
\put(12.30,16.25){\scalebox{0.275}{\color{white}$10$}}
\put(11.00,65.40){\scalebox{0.250}{\color{white}$11$}}
\put(11.00,52.82){\scalebox{0.250}{\color{black}$11$}}
\put(11.00,40.30){\scalebox{0.250}{\color{white}$11$}}
\put(11.00,27.58){\scalebox{0.250}{\color{black}$11$}}
\put(11.00,15.08){\scalebox{0.250}{\color{white}$11$}}
\put(10.50,60.20){\scalebox{0.225}{\color{white}$12$}}
\put(10.50,13.95){\scalebox{0.225}{\color{white}$12$}}
\put(10.00,66.45){\scalebox{0.225}{\color{black}$13$}}
\put(10.00,55.73){\scalebox{0.225}{\color{white}$13$}}
\put(10.00,45.10){\scalebox{0.225}{\color{black}$13$}}
\put(10.00,34.50){\scalebox{0.225}{\color{white}$13$}}
\put(10.00,23.80){\scalebox{0.225}{\color{black}$13$}}
\put(10.00,13.07){\scalebox{0.225}{\color{white}$13$}}
\end{overpic}
\caption{In each region, all complex lengths $\lambda + i\tau$ have the indicated \emph{power} for $\epsilon = 0.2$.  That is: every based loop of length $\epsilon$ that realizes injectivity radius is freely homotopic to this power of the core.  This figure was inspired by~\cite[Figure~2]{CullerShalen:VolumeBetti2}. }
\label{Fig:EpsRadius}
\end{figure}

The failure of global smoothness can be explained as follows. For each value of $\epsilon > 0$ and each complex length $\lambda + i\tau$ of the core of $N$, the radius of the $\epsilon$--tube is determined by some power of the generator of $\pi_1(N)$.  This power can change as the data $\lambda,\tau,\epsilon$ changes. For instance, \reffig{EpsRadius} shows the power of the core when $\epsilon = 0.2$ is fixed but $\lambda + i\tau$ varies. Meanwhile, \refexa{Pi} shows how the power can depend on $\epsilon$.

\begin{figure}[t]
\centering 
\begin{overpic}[width=5in]{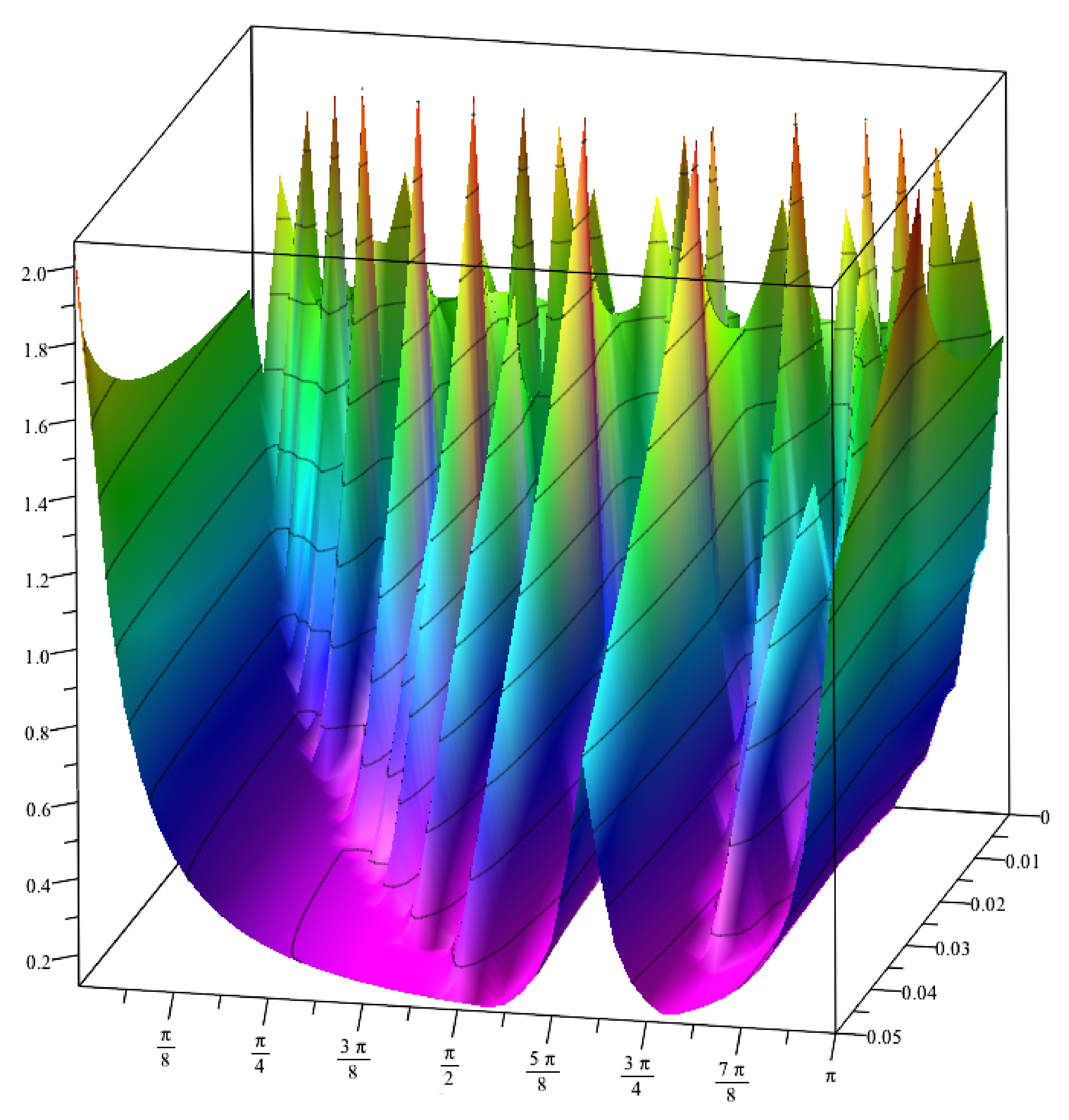}
\put(89,15){$\lambda$}
\put(36,2){$\tau$}
\end{overpic}
\caption{The graph of $d_{2\pi,\lambda, \tau}(\delta,\epsilon)$ when $\epsilon = 0.2$ and $\delta = 0.05$ are fixed, while $\lambda$ and $\tau$ are varying. The values of $d_{2\pi,\lambda, \tau}(\delta,\epsilon)$ range from approximately $0.117$ to $2.065$. By comparison, \refthm{EffectiveDistTubes} gives $\frac{\epsilon - \delta}{2} = 0.075$ as a fairly good lower bound and $2.0650 \ldots$ as a sharp upper bound.}
\label{Fig:Juarez}
\end{figure}

 \reffig{Juarez} shows the graph of $d_{2 \pi, \lambda, \tau}(\delta, \epsilon)$ when $\delta = 0.05$ and $\epsilon = 0.2$ are fixed but $(\lambda, \tau)$ vary.  Since the graph is the difference of a pair of wildly varying, piecewise-smooth functions, it is an extremely complicated terrain of deep valleys, narrow ridges, and sharp peaks.   
 The sharp ridges are points of non-differentiabilty, and occur where the power of the core for $\delta$ changes. Other points of non-differentiability, where the power for $\epsilon$ changes, occur in the valleys.
  Even though $\delta$ and $\epsilon$ are fixed, the value of  $d_{\alpha, \lambda,\tau}(\delta,\epsilon)$ ranges a great deal: from approximately $0.117$ to $2.065$. Nevertheless, \refthm{EffectiveDistTubes} gives upper and lower bounds that depend only on $\delta$ and $\epsilon$.

\subsection{Sharpness and numerical constants}
\label{Sec:TubeDistSharpness}

The hypothesis $\epsilon \leq 0.3$ in \refthm{EffectiveDistTubes} is slightly arbitrary. 
This hypothesis is not needed at all in the upper bound (see \refprop{EffectiveDistUpper}). In the lower bound,  our line of argument requires $\epsilon$ to be bounded in some way, with the choice of bound affecting the additive constant ($-0.0424$) in the statement. (See \refthm{EffectiveDistGeneral} for a generalized statement that holds for larger $\epsilon$.)
We chose the value $0.3$ because of its connection to currently available estimates on the Margulis constant $\epsilon_3$. In particular, as described in \refsec{Motivation}, applying \refthm{EffectiveDistTubes} with $\epsilon = 0.29$ can help determine the finite list of manifolds for which $0.29$ fails to be a Margulis number.

There are interesting examples illustrating the sharpness of both the upper and lower bounds of \refthm{EffectiveDistTubes}. As \refprop{EffectiveDistUpper} will show, the upper bound of \refthm{EffectiveDistTubes} is sharp for every pair $(\delta,\epsilon)$. It is attained if and only if $N$ is a nonsingular tube whose core has complex length $\lambda+i\tau = \delta + 0$; that is, the core has length $\delta$ and trivial rotational part.

The lower bound of \refthm{EffectiveDistTubes} is sharp up to additive error, which can be seen as follows. For every pair $(\delta, \epsilon)$ such that  $0 < \sqrt{7.256 \delta} \leq \epsilon \leq 0.3$, \refthm{Sharpness} constructs a solid torus $N = N_{2\pi, \lambda, \tau}$ such that
\begin{equation}
\label{Eqn:LowerBoundSharpness}
d_{2\pi, \lambda, \tau}(\delta, \epsilon) \: \leq \: 
\arccosh  \left( 1.116 \frac{\epsilon}{\sqrt{\delta} } \right).
\end{equation}
The core of $N$ has complex length $\lambda+i\tau = 1/n^2 + 2\pi i /n$, where $n$ is the least natural number such that $1/n^2 \leq \delta$. Meanwhile, \refthm{EffectiveDistTubes} gives
\begin{equation}
\label{Eqn:LowerBoundRestate}
d_{2\pi, \lambda, \tau}(\delta, \epsilon) \: \geq \: \arccosh \left( \frac{\epsilon}{\sqrt{7.256 \, \delta}} \right) - 0.0424.
\end{equation}
Since $\arccosh( x) \sim \log(2x)$ for large $x$, the expressions in~\refeqn{LowerBoundSharpness} and~\refeqn{LowerBoundRestate} 
differ by an additive error. In fact, the additive difference is less than $2.2$. 

One consequence of the above  paragraphs is that \refthm{EffectiveDistTubes} is sharpest when the solid torus $N$ is nonsingular; that is, when $N$ is the quotient of $\HH^3$ by a loxodromic isometry. Thus, while extending \refthm{EffectiveDistTubes} to singular tubes introduces a few technical complications (for example see Propositions~\ref{Prop:TubeRadFormula} and~\ref{Prop:MultiplicativeRadGap}), this extension does not weaken the statement in any way. 
We emphasize that the extension to singular tubes is needed for our forthcoming applications to cone-manifolds and to bounding the Margulis numbers of (nonsingular) hyperbolic manifolds.

   \smallskip

The  results of this paper have an interesting relation to the discussion of Margulis tubes in the work of Minsky (see \cite[Section 6]{minsky:punctured-tori} and \cite[Section 3.2.2]{minsky:models-bounds}). On the one hand, \refprop{TubeRadBound} confirms and effectivizes Minsky's assertion (\cite[proof of Lemma~6.1]{minsky:punctured-tori} and \cite[Equation~(3.6)]{minsky:models-bounds}) that, for $\epsilon \leq \epsilon_3$, there is a constant $c = c(\epsilon)$ such that
the radius of an $\epsilon$--tube of core length $\lambda + i \tau$ satisfies
\[
r_{2\pi,\lambda, \tau}(\epsilon) \geq \log \left( \frac{1}{\sqrt{\lambda}} \right) -c .
\]
On the other hand, the examples of \refthm{Sharpness} satisfy~\refeqn{LowerBoundSharpness} and therefore contradict  \cite[Equation~(3.7)]{minsky:models-bounds}. Of course, this does not affect the overall correctness of \cite{minsky:models-bounds}, as \emph{any} lower bound depending only on $\epsilon$ and $\delta$ that grows large as $\delta \to 0$ (for instance, the lower bound of \refthm{EffectiveDistTubes}) suffices for Minsky's work toward the classification of Kleinian surface groups.

\subsection{Cusps}
Recall that the thin part of a hyperbolic 3--manifold is a disjoint union of cusps and tubes. Although \refthm{EffectiveDistTubes} is only stated for tubes, there is a simpler and stronger statement for cusps.

\begin{proposition}
\label{Prop:EffectiveDistCusps}
Let $0 < \delta < \epsilon$.  Let $N$ be a horocusp whose $\epsilon$--thick part, $N^{\geq \epsilon}$, is non-empty.
Then the $\delta$--thin and $\epsilon$--thick parts of $N$ are separated by distance
\[
d_N(\delta, \epsilon) = d(N^{\leq \delta}, N^{\geq \epsilon}) = \log \left( \frac{ \sinh(\epsilon/2) }{ \sinh (\delta/2) } \right) .
\]
\end{proposition}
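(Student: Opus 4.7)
The plan is to work in the upper half-space model $\HH^3 = \{(x,y,t) : t > 0\}$, placing the cusp at $\infty$. In this model, the parabolic subgroup $\Gamma$ covering $N$ acts by Euclidean translations, forming a lattice (of rank $1$ or $2$) in the horizontal plane. Let $L > 0$ denote the shortest Euclidean translation length in $\Gamma$. Since the other elements of $\Gamma$ have Euclidean length $\geq L$, the shortest loop through a point at height $t$ is realized by a translate of this minimum-length generator.

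Next I would compute the loop length. Between two points $(x_0, y_0, t)$ and $(x_0 + L, y_0, t)$ at the same height, the hyperbolic distance is $2\arcsinh \bigl( L/(2t) \bigr)$, using the formula $\cosh d = 1 + d_E^2/(2t^2)$ together with the identity $\cosh(2x) = 1 + 2\sinh^2 x$. Therefore a point at height $t$ lies on an essential loop of length exactly $2\arcsinh\bigl(L/(2t)\bigr)$, and belongs to $N^{\leq \epsilon}$ precisely when $t \geq t_\epsilon := L / \bigl(2\sinh(\epsilon/2)\bigr)$. The hypothesis that $N^{\geq \epsilon}$ is nonempty guarantees that $t_\epsilon$ is finite and that the relevant horoball at infinity is embedded in $N$.

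The level set $\{t = t_\epsilon\}$ is therefore the boundary horosphere of the $\epsilon$--thick part; similarly, $\{t = t_\delta\}$ bounds the $\delta$--thin part, and $t_\delta > t_\epsilon$ since $\delta < \epsilon$. Because both level sets are horospheres centered at $\infty$, the distance between them is realized along any vertical geodesic, and equals
\[
\log \frac{t_\delta}{t_\epsilon} \: = \: \log \frac{\sinh(\epsilon/2)}{\sinh(\delta/2)},
\]
as claimed. The only mildly subtle point is the minimization step: one must verify that horospheres are equidistant in $\HH^3$, so that the vertical segment really achieves the minimum distance between them and projects to a minimizing arc in the quotient $N$. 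I expect this to be the main (small) obstacle, but it follows quickly from the fact that the vertical geodesic is perpendicular to both horospheres at infinity and from the standard computation of hyperbolic distance between points at different heights along a vertical line.
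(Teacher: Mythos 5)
Your proof is correct and takes essentially the same approach as the paper: both express the injectivity radius at a point of the cusp in terms of the horocyclic (Euclidean) translation length at the corresponding height, solve for the heights of the bounding horospheres, and compute the horosphere distance as the log of the height ratio. The only cosmetic difference is that the paper invokes a cited lemma for the relation $\text{(horocyclic length)} = 2\sinh(d/2)$, whereas you derive it directly via the upper half-space distance formula $\cosh d = 1 + d_E^2/(2t^2)$.
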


\begin{proof}
Let $x \in N$ be a point such that $\injrad(x) = \delta / 2$. Then
there must be a lift $\widetilde x \in \widetilde N$ and a parabolic covering transformation $\varphi$, such that $d(\widetilde x, \varphi \widetilde x) = \delta$. By \cite[Lemma A.2]{cfp:tunnels}, a horocyclic segment from $\widetilde x$ to $\varphi  \widetilde x$ has length $2 \sinh( \delta / 2)$. Similarly, if $y \in N$ is a point such that $\injrad(y) = \epsilon / 2$, then there is a  horocyclic segment from $\widetilde y$ to $\varphi \widetilde y$ of length $2 \sinh( \epsilon / 2)$. Now, a standard calculation shows that the horospheres containing $\widetilde x$ and $\widetilde y$ are separated by distance 
\[
d_N(\delta, \epsilon) = d(N^{\leq \delta}, N^{\geq \epsilon}) = \log \left( \frac{2 \sinh(\epsilon/2) }{2 \sinh (\delta/2) } \right) . \qedhere
\]
\end{proof}

We observe that the distance $d_N(\delta, \epsilon)$ satisfies both the upper and lower bounds of \refthm{EffectiveDistTubes}. Thus \refthm{EffectiveDistTubes} also applies to cusps.

\subsection{Organization}

\refsec{Equidistant} lays out definitions and sets up notation that will be used for the remainder of the paper. \refsec{TubeRad} proves \refprop{TubeRadFormula}, which gives an exact formula for the radius of an $\epsilon$--thin tube.

\refsec{Sharpness} describes a family of examples showing the sharpness of the lower bound of \refthm{EffectiveDistTubes}.  \refsec{Upper} proves the upper bound of \refthm{EffectiveDistTubes} and shows that it is sharp.

The lower bound of \refthm{EffectiveDistTubes} requires a delicate case analysis, treating shallow and deep tubes separately. For a maximally sharp estimate, we rely on a result of 
Zagier~\cite{meyerhoff}, later improved by Cao, Gehring, and Martin~\cite{CaoGehringMartin}. 
 We obtain a bound on the Euclidean metric on the tube boundary in \refsec{Euclidean}, and prove a lower bound on the depth of an $\epsilon$--tube in \refsec{TubeDepth}. These ingredients are combined to give the final proof in \refsec{Lower}.

Appendix~\ref{Sec:Trig} contains several elementary lemmas in hyperbolic trigonometry that are useful elsewhere in the paper.

\subsection{Acknowledgements}
Futer was  supported in part by NSF grant DMS--1408682. Purcell was supported in part by the Australian Research Council.  All three authors acknowledge support from NSF grants DMS--1107452, 1107263, 1107367, ``RNMS: Geometric Structures and Representation Varieties'' (the GEAR Network), which funded an international trip to collaborate on this paper.

We thank Ian Biringer and Yair Minsky for a number of enlightening conversations.  We also thank Tarik Aougab, Marc Culler, Priyam Patel, and Sam Taylor for their helpful suggestions. 

\section{Tubes and equidistant tori}
\label{Sec:Equidistant}

In this section, we set notation and give definitions for solid tori, tubes, and injectivity radii that will be used for the remainder of the paper.

To form a nonsingular hyperbolic tube, one starts with a neighborhood of a geodesic in $\HH^3$ and takes a quotient under a loxodromic isometry fixing that geodesic. In order for our results to hold for both nonsingular and singular tubes, we will take a quotient of a more complicated space, as in the following definition.  Fix $0 \in \HH^3$ to be an arbitrary basepoint. 

\begin{definition}
\label{Def:SingularModel}
Let $\sigma \subset \HH^3$ be a bi-infinite geodesic. Let $\Hhat$ denote the metric completion of the universal cover of $(\HH^3 \setminus \sigma)$. Let $\hat{\sigma}$ be the set of points added in the completion.

The space $\Hhat$ can be regarded as an infinite cyclic branched cover of $\HH^3$, branched over $\sigma$. The branch set $\hat{\sigma} \subset \Hhat$ is a singular geodesic with infinite cone angle. 

There is a natural action of $\CC$ (as an additive group) on $\Hhat$, where $z \in \CC$ translates $\hat{\sigma}$ by distance $\Real(z)$ and rotates by angle $\Imag(z)$. Since $\hat{\sigma}$ has infinite cone angle, we have that angles of rotation are real-valued. Conversely, every isometry $\varphi$ of $\Hhat$ that preserves orientation on both $\Hhat$ and $\hat{\sigma}$ comes from this action, and has a well-defined \emph{complex length} $z = \zeta+i\theta$. We can therefore write $\varphi = \varphi_{\zeta+i\theta}$.

We endow $\Hhat$ with a system of cylindrical coordinates $(r, \zeta, \theta)$, as follows. Choose a reference ray perpendicular to $\hat{\sigma}$, and let the points of this ray have coordinates $(r,0,0)$, where $r \geq 0$ measures distance from $\hat{\sigma}$. Then, let $(r,\zeta,\theta)$ be the image of $(r,0,0)$ under the isometry $\varphi_{\zeta+i\theta}$. The distance element in these coordinates is $ds$, where
\begin{equation}
\label{Eqn:CylindricalCoords}
ds^2 = dr^2 + \cosh^2 r \, d\zeta^2 + \sinh^2 r \, d\theta^2 .
\end{equation}
\end{definition}

\begin{definition}
\label{Def:ModelSolidTorus}
Consider a group $G = \ZZ \times \ZZ$ of isometries of $\Hhat$, generated by an elliptic $\psi_{i\alpha}$ and a loxodromic $\varphi = \varphi_{\lambda+i\tau}$, where $\alpha > 0$ and $\lambda > 0$. The quotient space $N_{\alpha,\lambda,\tau}$ is an open solid torus whose core curve $\Sigma$ is a closed geodesic of complex length $\lambda + i\tau$, and with a cone singularity of angle $\alpha$ at the core. We call $N = N_{\alpha,\lambda,\tau}$ a \emph{model solid torus}.

Note that if $\alpha = 2\pi$, the quotient of $\Hhat$ by the elliptic $\psi_{i\alpha}$ recovers $\HH^3$. In this case, the model solid torus $N_{2\pi, \lambda, \tau}$ is \emph{nonsingular}, and can be identified with $\HH^3/ \langle \varphi_{\lambda+i \tau} \rangle$, where $\varphi_{\lambda+i\tau}$ is a loxodromic isometry of $\HH^3$ with complex length $\lambda + i \tau$.
\end{definition}

\begin{definition}
\label{Def:Injectivity}
Let $N = N_{\alpha, \lambda, \tau}$ be a  model solid torus, and let $x \in N$. Then the \emph{injectivity radius} of $x$, denoted $\injrad(x)$, is the supremal radius $r$ such that an open metric $r$--ball about $x$ is isometric to a ball $B_r(0) \subset \HH^3$. Since we are using open balls, the supremal radius is attained. If $\alpha \neq 2\pi$ and $x$ lies on the singular core geodesic, we set $\injrad(x) = 0$.

For $\epsilon > 0$, the \emph{$\epsilon$--thick part} of $N$ is 
\[
N^{\geq \epsilon} = \{ x \in N : \injrad(x) \geq \epsilon / 2 \} .
\]
The \emph{$\epsilon$--thin part} is $N^{< \epsilon} = N \setminus N^{\geq \epsilon}$.
We define $N^{\leq \epsilon}$ and $N^{> \epsilon}$ similarly.
\end{definition}

We emphasize that our definition of the \emph{$\epsilon$--thick part} corresponds to injectivity radius $\epsilon/2$ rather than $\epsilon$.
 Both choices seem to be common in the literature on Kleinian groups. Our convention agrees with that of Minsky~\cite{minsky:punctured-tori, minsky:models-bounds} and Brock--Canary--Minsky~\cite{brock-canary-minsky:elc}, while differing from the convention of Brock--Bromberg \cite{brock-bromberg:density} and Namazi--Souto \cite{namazi-souto:density}. 

If $M$ is a non-singular manifold, every essential loop through a point $x \in M^{\geq \epsilon}$ has length at least $\epsilon$. A similar statement holds for singular tubes.


\begin{lemma}
\label{Lem:InjRadLoop}
Let $N = N_{\alpha, \lambda, \tau}$ be a model solid torus whose core is singular. That is, assume $N$ is the quotient of $\Hhat$ by $\ZZ^2 \cong \langle \psi_{i\alpha}, \varphi_{\lambda+i\tau} \rangle$ for $\alpha \neq 2\pi$. Choose a point $x \in N$ and a lift $\widetilde x \in \Hhat$. Set $\epsilon = 2\, \injrad(x)$. Then 
\begin{align}
\label{Eqn:InjectivityTrans}
\epsilon
& =  \min \left\{ d( \widetilde x, \eta \widetilde x)
 :  \eta \in \ZZ^2 \setminus \{ 0 \}  \right\}. 
\end{align}
Similarly, if $N$ is a nonsingular solid torus, the quotient of $\HH^3$ by
$\ZZ \cong \langle \varphi_{\lambda+i\tau}\rangle$, and $\widetilde x \in \HH^3$ is a point covering $x$, then 
\begin{align}
\label{Eqn:NonsingInjectivityTrans}
\epsilon
& =  \min \left\{ d( \widetilde x, \eta \widetilde x)
 :  \eta = \varphi_{\lambda+i\tau}^n \in \ZZ \setminus \{ 0 \}  \right\}. 
 \end{align}
\end{lemma}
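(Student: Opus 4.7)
My plan is to interpret $\injrad(x)$ using the covering $\pi\from \Hhat \to N$, express $2\injrad(x)$ as the minimum of two natural quantities, and then show that one of them always dominates the other by exploiting the infinite cone angle along $\hat\sigma$. Write $G = \langle \psi_{i\alpha}, \varphi_{\lambda+i\tau} \rangle \isom \ZZ^2$, let $d_0 = d(\widetilde x, \hat\sigma)$, and let $\rho$ denote the right-hand side of~\refeqn{InjectivityTrans}. The degenerate case $\widetilde x \in \hat\sigma$ is immediate: by convention $\injrad(x) = 0$, while $\psi_{i\alpha}$ is a nontrivial element of $G$ that fixes $\widetilde x$, so $\rho = 0$ as well. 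Thus I may assume $\widetilde x \notin \hat\sigma$, where $G$ acts freely.

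I would next verify that the open ball $B_r(x) \subset N$ is isometric to a ball of radius $r$ in $\HH^3$ if and only if two conditions hold for the lift $B_r(\widetilde x) \subset \Hhat$: (a) $B_r(\widetilde x) \cap \hat\sigma = \emptyset$, equivalently $r \leq d_0$, so that $B_r(\widetilde x)$ lies in the locally $\HH^3$ part of $\Hhat$; and (b) $\pi$ is injective on $B_r(\widetilde x)$, which by a standard triangle-inequality argument is equivalent to $d(\widetilde x, \eta \widetilde x) \geq 2r$ for every $\eta \in G \setminus \{0\}$. Consequently $\injrad(x) = \min(d_0, \rho/2)$, and the lemma reduces to the inequality $\rho \leq 2 d_0$.

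To establish $\rho \leq 2 d_0$, I would take the specific element $\eta = \psi_{i\alpha}$. Using the cylindrical coordinates of~\refeqn{CylindricalCoords}, arrange that $\widetilde x = (d_0, 0, 0)$, so that $\psi_{i\alpha} \widetilde x = (d_0, 0, \alpha)$. The concatenation of the radial segment from $\widetilde x$ inward to the foot of the perpendicular on $\hat\sigma$ with the radial segment from that foot back outward to $\psi_{i\alpha} \widetilde x$ is a broken path of length $2 d_0$. Because $\hat\sigma$ has infinite cone angle in $\Hhat$, arbitrary turns at $\hat\sigma$ are admissible, so this concatenation is a genuine path in the length space $\Hhat$. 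Hence $d(\widetilde x, \psi_{i\alpha} \widetilde x) \leq 2 d_0$, giving $\rho \leq 2 d_0$ as required. The nonsingular statement~\refeqn{NonsingInjectivityTrans} then follows from the same template with $\Hhat$ replaced by $\HH^3$, $G = \ZZ = \langle \varphi_{\lambda+i\tau} \rangle$, and condition (a) vacuous; this is the classical fact about hyperbolic quotients.

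The main subtlety I anticipate lies in the third step: justifying rigorously that the broken radial path through $\hat\sigma$ is admissible and so gives a legitimate upper bound on the distance in $\Hhat$. This appeals to the metric completion description of $\Hhat$ in \refdef{SingularModel}, where the cone angle at $\hat\sigma$ is unbounded; alternatively, one can approximate the broken path by smooth detours inside $\Hhat \setminus \hat\sigma$ that dip closer and closer to $\hat\sigma$ and pass to the length limit. Once this point is verified, everything else is routine bookkeeping on the covering space.
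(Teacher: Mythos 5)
Your proof is correct, and the central observation is the same as the paper's: the elliptic generator $\psi_{i\alpha}$ always moves $\widetilde x$ by at most $2d(\widetilde x, \hat\sigma)$, via the broken radial path through $\hat\sigma$ (a fact the paper extracts from the triangle inequality through the tangency point $z \in \Sigma$). Where you differ is in organization. The paper argues directly at the critical radius $\epsilon/2$: it shows $\min_\eta d(\widetilde x, \eta\widetilde x) \geq \epsilon$ from the embedded ball, then examines the continuous extension of $f$ to $\overline{B_{\epsilon/2}(0)}$ and splits into cases (fails to be one-to-one, or meets $\Sigma$), producing a tangent pair of lifted balls in either case. You instead first establish the structural identity $\injrad(x) = \min\bigl(d_0, \rho/2\bigr)$ and then collapse the minimum by proving $\rho \leq 2d_0$, which handles both of the paper's cases at once and also dispatches the degenerate case $\widetilde x \in \hat\sigma$ cleanly. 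The one place your sketch leans on unstated work is the ``if and only if'' characterization of when $B_r(x)$ is isometric to a hyperbolic ball: sufficiency of conditions (a) and (b) needs a small argument (for instance that under (b) the ball $B_r(\widetilde x)$ sits in the interior of a Dirichlet domain for $G$ based at $\widetilde x$, hence projects isometrically, and that under (a) the developing map on $B_r(\widetilde x)$ is injective because $\Hhat \setminus \hat\sigma \to \HH^3 \setminus \sigma$ is a universal cover and the downstairs ball is simply connected). You flag this as routine, which is fair, but it is precisely the bookkeeping the paper sidesteps by arguing with the embedding $f$ directly rather than stating a formula for $\injrad(x)$.
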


\begin{proof}
We focus on the singular case, as the nonsingular case is well-known. 
For an arbitrary basepoint $0 \in \HH^3$, there is an isometric embedding $f \from B_{\epsilon/2}(0) \to N $, such that $f(0) = x$. Then any non-trivial element $\eta \in \ZZ^2$ must translate $B_{\epsilon/2}(\widetilde x)$ by distance at least $\epsilon$, so
\[
\min \{ d(\widetilde x, \eta\widetilde x) : \eta\in \ZZ^2\setminus\{0\} \} \geq \epsilon. 
\]

Next, since $\injrad(x) = \epsilon/2$, the continuous extension of $f$ to $\overline{B_{\epsilon/2}(0)}$ either hits the core $\Sigma$ or fails to be one to one. If it fails to be one to one, then a lifted ball is tangent to a translate of itself under a nontrivial element $\eta \in \ZZ^2$. On the other hand, if it meets the core $\Sigma$, then there is a point $z \in f(\overline{B_{\epsilon/2}(0)}) \cap \Sigma$. The preimage $\widetilde z$ of $z$ is fixed by an elliptic subgroup $\langle \psi_{i \alpha} \rangle \subset \ZZ^2$. Thus a translate of the lifted ball by the generator $\eta = \psi_{i \alpha}$ will be tangent to the ball. 

In either case, there are two distinct lifts of the ball, namely $B_{\epsilon/2}(\widetilde x)$ and $B_{\epsilon/2}(\eta \widetilde x)$, that are tangent in $\Hhat$.  Therefore, $d( \widetilde x, \eta \widetilde x) = \epsilon$, and the minimum over all group elements must be at most $\epsilon$.  The minimum is attained because $\ZZ^2$ acts discretely.
\end{proof}

\begin{definition}
\label{Def:Power}
If $N$ is a nonsingular tube and $\epsilon \geq \lambda$, we define the \emph{power} for $\epsilon$ to be any $n \in \NN$ so that the deck transformation $\eta = \varphi^n$ realizes the minimum in Equation~\refeqn{NonsingInjectivityTrans}.  If $N$ is a singular tube and $\epsilon > 0$, we define the \emph{power} for $\epsilon$ to be any $n \in \NN \cup \{ 0 \}$ so that the deck transformation $\eta = \varphi^n \psi^m$ realizes the minimum in Equation~\refeqn{InjectivityTrans}, for some $m \in \ZZ$.  The power is uniquely defined for almost every $\epsilon$.
\end{definition}

We remark that the power for $\epsilon$ is genuinely a function of $\epsilon$ and $N$, and does not depend on the choice of $x \in N$ such that $\injrad(x) = \epsilon/2$. 
This is because for every $\epsilon > 0$ and every loxodromic $\eta \in \Isom(\Hhat)$, the set $\{y \in \Hhat : d(y, \eta y) = \epsilon \}$ is a Euclidean plane at fixed radius from the core geodesic $\hat \sigma$ (or equal to $\hat \sigma$, or empty). The quotient of this set in $N$ is equidistant from the core of $N$. This is a distinctly $3$--dimensional phenomenon: already for $\eta \in \Isom(\HH^4)$, the set of points moved by distance $\epsilon$ can be far more complicated \cite{Susskind}.

\section{Tube radii}
\label{Sec:TubeRad}

We now provide a bit of background for tube radii in hyperbolic $3$--manifolds, well-known to the experts.  Let $N = N_{\alpha, \lambda, \tau}$ be a model solid torus, as in \refdef{ModelSolidTorus}.  Our eventual goal is to bound distances between the boundaries of the $\epsilon$--thin and $\delta$--thin tubes $N^{\leq\epsilon}$ and $N^{\leq\delta}$, independently of $\alpha$, $\lambda$, and $\tau$.  In this section, we derive a formula for the radius of one such tube. 

\begin{definition}
\label{Def:TubeRad}
Let $\epsilon > 0$, and assume the $\epsilon$--thin part $N^{< \epsilon}$ is non-empty. Then $T^\epsilon = \bdy N^{\leq\epsilon}$ is a torus consisting of points whose injectivity radius is exactly $\epsilon/2$. All of the points of $T^\epsilon$ lie at the same radius from the core geodesic $\Sigma$. We denote this radius by
\[
r(\epsilon) = r_{\alpha, \lambda, \tau} (\epsilon).
\]
We let $T_r$ denote the equidistant torus at radius $r$ from the core of $N$. Subscripts denote radius, while superscripts denote thickness.  Thus
\[
T^\epsilon = T_{r(\epsilon)}.
\]
Given $0 < \delta < \epsilon$, where $N^{< \delta} \neq \emptyset$, we define the distance $d(\delta, \epsilon)$ between $T^\delta$ and $ T^\epsilon$ as follows:
\begin{equation}
\label{Eqn:DistDef}
d(\delta,\epsilon) = d_{\alpha,\lambda,\tau}(\delta,\epsilon) = r_{\alpha, \lambda, \tau} (\epsilon) - r_{\alpha, \lambda, \tau} (\delta).
\end{equation}
\end{definition}

Our next goal is to find a formula for $r(\epsilon)=r_{\alpha,\lambda,\tau}(\epsilon)$ in terms of complex lengths of isometries stabilizing the singular geodesic of $\Hhat$.

\begin{definition}
\label{Def:TranslationRad}
Let $\varphi_{\lambda+i\tau}$ be an isometry of $\Hhat$, fixing the singular geodesic $\hat \sigma$, with complex length $\lambda+i\tau$.
For $\epsilon \geq \lambda$, define the \emph{translation radius}, denoted $\trad_{\lambda,\tau}(\epsilon)$, to be the value of $r$ such that $\varphi_{\lambda+i\tau}$ translates all points of the form $(r,\zeta,\theta) \in \Hhat$ by distance $\epsilon$.

We also need to compute the translation radius of a loxodromic isometry $\varphi_{\lambda+i \tau}$ acting on $\HH^3$ instead of $\Hhat$. Since angles in $\HH^3$ are only defined modulo $2\pi$, this radius coincides with the translation radius of $\varphi_{\lambda+i (\tau \smod 2\pi)}$ acting on $\Hhat$, namely    $\trad_{\lambda, \, \tau \smod 2\pi}(\epsilon)$. \end{definition}

The usage $(\tau \smod 2\pi)$ can be generalized to angles modulo other numbers.

\begin{definition}
\label{Def:Mod}
 Given $a \in \RR$ and $b > 0$, we define $(a \, \smod b)$ to be 
\[
a \smod b = x \in [-b/2, \, b/2) \quad \text{such that} \quad (a - x) \in b \ZZ.
\]
\end{definition}

In many situations, the translation radius can be computed in closed form.

\begin{lemma}
\label{Lem:TubeTransDistance} 
Let $(r, \zeta_1, \theta_1)$ and $(r, \zeta_2, \theta_2)$ be points of $\Hhat$ in cylindrical coordinates, and let $d$ be the distance between those points. If $|\theta_1 - \theta_2| \leq \pi$, then
\begin{align}
\cosh d &= \cosh (\zeta_1 - \zeta_2) \cosh^2 r - \cos (\theta_1 - \theta_2) \sinh^2 r \nonumber \\
             &= (\cosh (\zeta_1 - \zeta_2) - \cos (\theta_1 - \theta_2)) \cosh^2 r + \cos (\theta_1 - \theta_2). \label{Eqn:GMMDistance}
\end{align}
If $|\theta_1 - \theta_2| \geq \pi$, then 
\[
\cosh d = (\cosh (\zeta_1 - \zeta_2) + 1) \cosh^2 r) -1.
\]
Consequently, when $\epsilon \geq \lambda$ and $0 \leq |\tau | \leq \pi$, we have
\begin{equation}
\label{Eqn:TradDistance}
r = \trad_{\lambda,\tau}(\epsilon) = \arccosh \sqrt{ \frac{ \cosh \epsilon - \cos (\tau)}{ \cosh \lambda - \cos (\tau)}.    }
\end{equation}
\end{lemma}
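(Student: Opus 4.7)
The plan is to derive the two distance formulas by reducing the geometry of $\Hhat$ to that of $\HH^3$, and then to obtain the translation radius formula by substitution and algebraic rearrangement. The split at $|\theta_1 - \theta_2| = \pi$ reflects whether a length-minimizing path in $\Hhat$ can stay away from the branch locus $\hat{\sigma}$.

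\textbf{Case 1: $|\theta_1 - \theta_2| \leq \pi$.} First I would derive the formula in $\HH^3$ itself. Working in the hyperboloid model, place the axis $\sigma$ along a standard coordinate direction; then each point $p_i = (r, \zeta_i, \theta_i)$ has explicit ambient coordinates in $\RR^{3,1}$, and a direct Minkowski inner-product computation yields
\[
\cosh d(p_1, p_2) \: = \: -\langle p_1, p_2\rangle \: = \: \cosh(\zeta_1 - \zeta_2)\cosh^2 r \, - \, \cos(\theta_1 - \theta_2)\sinh^2 r.
\]
This establishes the first form of the formula in $\HH^3$; the second form follows from $\cosh^2 r = \sinh^2 r + 1$. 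To transfer the result to $\Hhat$, I observe that any slab of angular width at most $2\pi$ embeds isometrically into $\HH^3$ via the covering map. When $|\theta_1 - \theta_2| \leq \pi$ both points lie in some such slab, the unique $\HH^3$--geodesic joining their images stays at positive distance from $\sigma$, and this geodesic lifts to a geodesic in $\Hhat$. So the two distances agree.

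\textbf{Case 2: $|\theta_1 - \theta_2| \geq \pi$.} This is the main obstacle, since the length-minimizing path must pass through $\hat{\sigma}$. I would argue that any path in $\Hhat \setminus \hat\sigma$ from $p_1$ to $p_2$ has total angular change equal to $\theta_2 - \theta_1$, and when this exceeds $\pi$ in absolute value the projected path in $\HH^3\setminus\sigma$ has nontrivial winding around $\sigma$. A direct length comparison using the metric $ds^2 = dr^2 + \cosh^2 r\, d\zeta^2 + \sinh^2 r\, d\theta^2$ shows that any such winding path is longer than the concatenation of two geodesic segments from $p_1$ and $p_2$ to a common point of $\hat\sigma$. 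Hence the minimizer touches $\hat\sigma$, and unfolding its two halves into a single copy of $\HH^3$ reflects the endpoints to points at the same radius $r$, with $\zeta$--separation $\zeta_1 - \zeta_2$ and angular separation exactly $\pi$. Substituting $\theta_1 - \theta_2 = \pi$ into the Case 1 formula and using $\cos\pi = -1$ yields $\cosh d = (\cosh(\zeta_1-\zeta_2)+1)\cosh^2 r - 1$.

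\textbf{Translation radius.} The isometry $\varphi_{\lambda + i\tau}$ sends $(r, \zeta, \theta)$ to $(r, \zeta + \lambda, \theta + \tau)$, so the distance at radius $r$ is computed from the two formulas above with $\zeta_1 - \zeta_2 = \lambda$ and $\theta_1 - \theta_2 = \tau$. Since $|\tau| \leq \pi$, Case 1 applies, and setting $d = \epsilon$ in the second form yields
\[
\cosh\epsilon - \cos\tau \: = \: (\cosh\lambda - \cos\tau)\cosh^2 r.
\]
Solving for $r$ produces the stated closed form. The hypothesis $\epsilon \geq \lambda$ guarantees $\cosh\epsilon - \cos\tau \geq \cosh\lambda - \cos\tau > 0$, so the argument of $\arccosh$ is at least $1$ and $r \geq 0$ is well-defined.
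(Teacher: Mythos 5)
Your proof is correct, but it takes a more self-contained route than the paper does.  The paper proves~\refeqn{GMMDistance} essentially by citation: the nonsingular case is attributed to Gabai--Meyerhoff--Milley, and the extension to $\Hhat$ to Hodgson--Kerckhoff.  You instead derive the formula from scratch via the hyperboloid model (place $\sigma$ along a coordinate axis, write $p_i = (\cosh r\cosh\zeta_i, \cosh r\sinh\zeta_i, \sinh r\cos\theta_i, \sinh r\sin\theta_i)$, and expand $-\langle p_1,p_2\rangle$); this computation is elementary and checks out.  Your transfer to $\Hhat$ relies implicitly on $\Hhat$ being uniquely geodesic --- you need that the lift of the $\HH^3$--geodesic is not merely a local geodesic in $\Hhat$ but the global minimizer --- which is true because $\Hhat$ has infinite cone angle along $\hat\sigma$ and is therefore CAT$(-1)$; it would strengthen the argument to say so, though the paper leans on the same fact through its citation.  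For the case $|\theta_1 - \theta_2| \geq \pi$, both you and the paper assert (rather than fully prove) that the minimizer passes through $\hat\sigma$; your unfolding argument to reduce to $\Delta\theta = \pi$ is a nice concrete way to then extract the formula, and is equivalent to the paper's one-line observation that distances to and from $\hat\sigma$ do not see the $\theta$--coordinate.  The derivation of~\refeqn{TradDistance} by substitution is the same in both.  What your approach buys is independence from the cited references; what it costs is a slightly longer exposition and the need to make the CAT$(-1)$ input explicit.
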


\begin{proof}
Equation~\refeqn{GMMDistance} is proved in Gabai--Meyerhoff--Milley \cite[Lemma~2.1]{GabaiMeyerhoffMilley:VolTubes}. See Hodgson--Kerckhoff~\cite[Lemma~4.2]{hk:univ} for the extension to singular tubes in the case where $|\theta_1 - \theta_2| \leq \pi$. If $|\theta_1 - \theta_2| \geq \pi$, then the geodesic in $\Hhat$ between the two points will pass through the singular axis $\hat \sigma$. Thus the distance remains the same if we set $|\theta_1 - \theta_2| = \pi$, hence $\cos(\theta_1 - \theta_2) = -1$.

For Equation~\refeqn{TradDistance}, we substitute $d = \epsilon$ and  $(\zeta_1, \theta_1) = (\lambda, \tau)$ and $(\zeta_2, \theta_2) = (0, 0)$. Solving for $r$ in~\refeqn{GMMDistance} gives the result.
\end{proof}

\begin{remark}
\label{Rem:WeirdArccosh}
It will be convenient to refer to \refdef{TranslationRad} and \reflem{TubeTransDistance} even in situations when  $\epsilon < \lambda$, hence when there \emph{do not exist} points of $\Hhat$ that $\varphi_{\lambda+ i \tau}$ translates by distance $\epsilon$. If $\epsilon < \lambda$, we define   $\trad_{\lambda,\tau}(\epsilon) = -\infty$. Similarly, when $x < 1$, we set $\arccosh(x) = - \infty$.
Under this convention,~\refeqn{TradDistance} holds for all pairs $(\lambda, \epsilon)$. 
\end{remark}

The translation radius $\trad_{\lambda,\tau}(\epsilon)$ is related to the radius of the $\epsilon$--thin part of $N = N_{\alpha, \lambda, \epsilon} $, but they are not necessarily identical.

\begin{figure}
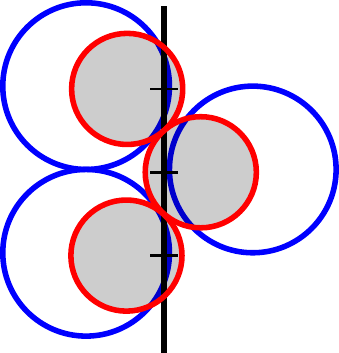
\caption{Schematic showing how the tangency pattern of balls $B_\epsilon$, $\varphi(B_\epsilon)$, and $\varphi^2(B_\epsilon)$ depends on $\epsilon$. In this figure, $\alpha=2\pi$, $\lambda=0.1$, $\tau=\pi$ are fixed. When $\epsilon$ is small, the balls $B_\epsilon$ and $\varphi(B_\epsilon)$ are tangent first (shaded). When $\epsilon$ is larger, $B_\epsilon$ and $\varphi^2(B_\epsilon)$ become tangent first. }
\label{Fig:Example}
\end{figure}

\begin{example}
\label{Exa:Pi}
Set $\alpha=2\pi$, $\lambda=0.1$, and $\tau=\pi$. Then $N= N_{\alpha, \lambda, \tau}$ is a quotient of $\HH^3$.
The generator $\varphi = \varphi_{\lambda, \tau}$ of $\pi_1 N \cong \ZZ$ will translate any point $\widetilde x \subset \HH^3$  along the invariant geodesic $\sigma$, and also rotate it by $\pi$ about $\sigma$. When $\epsilon < 0.1$,  we have $N^{\leq \epsilon} = \emptyset$, or equivalently $\trad_{\lambda,\tau}(\epsilon) = - \infty$. When $0.1 \leq \epsilon \leq 0.2$, the map $\varphi^n$ for $n\geq 2$ will translate any point of $\HH^3$, including points of $\sigma$, by a distance larger than $\epsilon$.
Thus, the radius of $N^{\leq \epsilon}$ is governed by $\varphi$ alone: that is, $r(\epsilon) = \trad_{\lambda, \tau}(\epsilon)$ and the power for $\epsilon$ is $1$.

Now, consider what happens when $\epsilon$ is ever so slightly larger than $0.2$, say $\epsilon = 0.201$. Because $\varphi^2$ has trivial rotational part, a point $\widetilde x$ can lie relatively far from $\sigma$ (at radius $r = 0.1001 \ldots$) and still be translated by distance $\epsilon$. In symbols, for a point $\widetilde x = (r, 0, 0) \in \HH^3$, we have
\[
d(\widetilde x, \varphi^2 \widetilde x) = \epsilon
\qquad \text{and} \qquad
\trad_{2\lambda, \, 2\tau \!\smod 2\pi}(\epsilon) = \trad_{2\lambda, \, 0 }(\epsilon) =  r = 0.1001 \ldots.
\]
Meanwhile, because $\varphi$ rotates points by angle $\pi$ about $\sigma$, it will move the same point $\widetilde x = (r, 0, 0) \in \HH^3$ by a distance much larger than $\epsilon$, whereas the points translated by distance $\epsilon$ lie closer to $\sigma$:
\[
d(\widetilde x, \varphi \widetilde x) = 0.2239 \ldots > \epsilon
\qquad \text{and} \qquad
\trad_{\lambda,\tau}(\epsilon) = 0.0871 \ldots < r.
\]
Finally, $\varphi^n$ for $n > 2$ has both translational and rotational part larger than that of $\varphi^2$, so $\varphi^n$ will definitely move any point of $\HH^3$ further than $\varphi^2$. In fact, for every $\epsilon \geq 0.201$, the radius of $N^{\leq \epsilon}$ is governed by $\varphi^2$: that is,  $r(\epsilon) = \trad_{2\lambda, 2\tau}(\epsilon)$ and the power for $\epsilon$ is $2$. See \reffig{Example}.
\end{example}

The above example is instructive in two ways. First, it illustrates that the power for $\epsilon$ is locally constant but jumps when $\epsilon$ crosses certain isolated values. Additional examples of this phenomenon are described in  \refsec{Sharpness}.  Second, the displayed equations above show that while $\epsilon = 2\injrad(x)$ is determined by taking a \emph{minimum} distance over all nonzero powers of $\varphi$ (see \reflem{InjRadLoop}), the tube radius $r(\epsilon)$ is determined by taking a \emph{maximum} value of $\trad_{n\lambda,n\tau}(\epsilon)$ over all nonzero powers. \refprop{TubeRadFormula} makes this idea precise, for singular as well as nonsingular tubes.

Before moving on, we note that the power for $\epsilon$ can only jump upward as $\epsilon$ increases; see \refrem{PowerMonotonicity}. When $\epsilon$ is fixed but $\lambda + i \tau$ varies, the jumping behavior of powers follows the combinatorics of the Farey graph, as illustrated in \reffig{EpsRadius}. 
The jumping behavior also explains why the function $r_{\alpha,\lambda,\tau}(\epsilon)$ and the related function $d_{\alpha,\lambda,\tau}(\delta, \epsilon)$ have points of non-differentiability, as visible in \reffig{Juarez}.

We are now ready to give a formula for the radius of the $\epsilon$--tube $N^{\leq \epsilon}$. The following is the main result of this section. 

\begin{proposition}
\label{Prop:TubeRadFormula}
Let $N = N_{\alpha, \lambda, \tau}$ be a model solid torus of cone angle $\alpha \leq 2\pi$. For any $\epsilon$ such that $N^{\leq \epsilon} \neq \emptyset$, the radius $r(\epsilon) = r_{\alpha,\lambda,\tau}(\epsilon)$ of the tube $N^{\leq \epsilon}$ can be computed as follows.

When the tube is nonsingular (that is, $\alpha = 2\pi$)
\begin{equation*}
r_{2\pi, \lambda, \tau}(\epsilon) = \max_{n \in \NN}  \big \{ \trad_{n\lambda, \, n\tau \!\smod 2\pi}(\epsilon) \big \}
=  \max_{n \in \NN}  \left \{    \arccosh \sqrt{ \frac{ \cosh \epsilon - \cos (n \tau)}{ \cosh (n\lambda) - \cos (n\tau)}    }  \right \} .
\end{equation*}

When the tube is singular (that is, $0 < \alpha < 2\pi$)
\begin{equation*}
  r_{\alpha,\lambda,\tau}(\epsilon) =\max\left\{ \trad_{0,\alpha}(\epsilon),\,
    \max_{n \in \NN}  \left\{ \trad_{n\lambda, \, n\tau \!\smod \alpha}(\epsilon) \right\}\right\}. 
\end{equation*}
Furthermore, if $\pi \leq \alpha <2\pi$, then $\trad_{0,\alpha}(\epsilon) = \epsilon/2.$
%
\end{proposition}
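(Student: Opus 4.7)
The plan is to reduce the computation of $r(\epsilon)$ to an optimization over the deck group, by combining \reflem{InjRadLoop} (which characterizes injectivity radius via minimal translation distances) with \reflem{TubeTransDistance} (which gives distances in cylindrical coordinates). The key observation is that, by \refdef{Injectivity} and \reflem{InjRadLoop}, a point $x \in N$ lies in $N^{\leq \epsilon}$ if and only if some nontrivial deck transformation $\eta$ satisfies $d(\widetilde x, \eta \widetilde x) \leq \epsilon$. For each fixed $\eta$, Equation~\refeqn{GMMDistance} shows that the set $\{y \in \Hhat : d(y, \eta y) \leq \epsilon\}$ is a closed tube around $\hat\sigma$ of some radius $R_\eta(\epsilon)$, so taking a union over $\eta$ and descending to $N$ gives
\[
r(\epsilon) \;=\; \max_{\eta \neq 0} R_\eta(\epsilon).
\]
The remaining work is to enumerate the deck transformations and compute this maximum.

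In the nonsingular case $\alpha = 2\pi$, the deck group is $\langle \varphi \rangle \cong \ZZ$, and the evenness of $\cos$ in Equation~\refeqn{GMMDistance} lets me restrict to $\eta = \varphi^n$ with $n \in \NN$. Its complex length in $\HH^3$ is $n\lambda + i(n\tau)$, with angle well-defined modulo $2\pi$. Substituting into Equation~\refeqn{TradDistance} yields $R_{\varphi^n}(\epsilon) = \trad_{n\lambda,\, n\tau \smod 2\pi}(\epsilon)$, and maximizing over $n$ gives the stated formula.

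In the singular case $0 < \alpha < 2\pi$, the deck group is $\ZZ^2 \cong \langle \psi_{i\alpha}, \varphi_{\lambda+i\tau}\rangle$, and a nontrivial element is $\eta = \varphi^n \psi^m$ of complex length $n\lambda + i(n\tau + m\alpha)$ in $\Hhat$. For fixed $n \neq 0$, Equation~\refeqn{GMMDistance} makes $R_\eta(\epsilon)$ a strictly increasing function of the cosine of the (capped) angular displacement, so it is maximized by choosing $m$ to minimize $|n\tau + m\alpha|$. That minimum is precisely $|(n\tau) \smod \alpha| \leq \alpha/2 \leq \pi$, which lies in the first regime of \reflem{TubeTransDistance}, so Equation~\refeqn{TradDistance} applies directly and gives $\max_m R_{\varphi^n \psi^m}(\epsilon) = \trad_{n\lambda,\, (n\tau) \smod \alpha}(\epsilon)$. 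For the remaining sub-case $n = 0$, $m \neq 0$, a short trigonometric comparison using \reflem{TubeTransDistance} shows that $m = \pm 1$ maximizes $R_{\psi^m}(\epsilon)$ and that this maximum equals $\trad_{0, \alpha}(\epsilon)$. Combining these sub-cases yields the singular-case formula.

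Finally, the assertion $\trad_{0, \alpha}(\epsilon) = \epsilon/2$ for $\pi \leq \alpha < 2\pi$ follows immediately from the second formula of \reflem{TubeTransDistance}: plugging $\zeta_1 = \zeta_2$ and $d = \epsilon$ into the $|\theta_1 - \theta_2| \geq \pi$ case gives $\cosh \epsilon = 2\cosh^2 r - 1 = \cosh(2r)$, hence $r = \epsilon/2$. The main technical subtlety of the argument is the optimization in the singular case: one must carefully track when $m$ pushes the effective angular displacement across the threshold $\pi$ and therefore switches which formula of \reflem{TubeTransDistance} applies. Since the dependence of the tube radius on angle is monotone in $\cos$, however, each case reduces to a short direct calculation.
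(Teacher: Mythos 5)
Your proof is correct and takes essentially the same approach as the paper: both reduce $r(\epsilon)$ to a maximum of translation radii $\trad$ over the deck group via \reflem{InjRadLoop} and \reflem{TubeTransDistance}, handle the singular case by first optimizing over $m$ in $\eta = \varphi^n\psi^m$ for each fixed $n$, and treat the purely elliptic $n=0$ case separately. Your reframing via the union of sublevel sets $\{y : d(y,\eta y) \leq \epsilon\}$ and your algebraic derivation of $\trad_{0,\alpha}(\epsilon)=\epsilon/2$ from the $|\theta_1-\theta_2|\geq\pi$ branch of \reflem{TubeTransDistance} are cosmetic variants of the paper's pointwise and geometric arguments, respectively.
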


\begin{remark}
\label{Rem:TradMax}
In each case of \refprop{TubeRadFormula}, it suffices to take a maximum over a finite set. Indeed, when $n\lambda > \epsilon$, the translation length of $\varphi^n$ is sure to be larger than $\epsilon$.  Thus, by \refrem{WeirdArccosh}, all values of $n$ larger than $\epsilon/\lambda$ will contribute $- \infty$ to the set over which we are taking a maximum, and can therefore be ignored. 

The integer $n \in \NN \cup \{ 0 \}$ that realizes the maximum is the same as the power for $\epsilon$, defined in \refdef{Power}.
This shows that $r(\epsilon)$ is actually a maximum rather than a supremum.

\end{remark}

\begin{proof}[Proof of \refprop{TubeRadFormula}]
First, assume that $N$ is nonsingular. In this case, the solid torus $N$ can be described as $N_{2\pi, \lambda, \tau} = \HH^3 / \langle \varphi_{\lambda+ i \tau} \rangle \cong \HH^3 / \ZZ $. Let $x \in N$ be a point such that $2\injrad(x) = \epsilon$, and let $\widetilde x$ be a preimage in $\HH^3$. By \reflem{InjRadLoop}, we have
\[
\epsilon = \min \left\{ d( \widetilde x, \varphi^n \widetilde x)
 :   n \in \ZZ \setminus \{ 0 \}  \right\} .
\]
Let $m \in \ZZ \setminus \{ 0 \}$ be a power realizing the minimum. Without loss of generality, we may assume $m > 0$. Then, for every $n \in \NN$, we have $d( \widetilde x, \varphi^n \widetilde x) \geq d( \widetilde x, \varphi^m \widetilde x)$. In other words, a point $\widetilde y \in \HH^3$ that $\varphi^n$ moves by distance $\epsilon$ would have to be \emph{closer} to the core geodesic than $\widetilde x$ is, hence
$\trad_{ m \lambda, \, m \tau \smod 2\pi}(\epsilon) \geq \trad_{ n \lambda, \, n \tau \smod 2\pi}(\epsilon)$.
(This includes the possibility that no such point $\widetilde y$ exists: that is, $\trad_{n\lambda,n\tau \smod 2\pi}(\epsilon) = - \infty$; see \refrem{WeirdArccosh}.) We conclude that
\[
r_{2\pi, \lambda, \tau}(\epsilon) = \trad_{ m \lambda, \, m \tau \smod 2\pi}(\epsilon)  = \max_{n \in \NN}  \big \{ \trad_{n\lambda, \, n\tau \smod 2\pi}(\epsilon) \big \}.
\]
For every $n \in \NN$, we may compute $\trad_{n\lambda, \, n\tau \smod 2\pi}(\epsilon) $ via Equation~\refeqn{TradDistance}:
\[
\trad_{n\lambda,\,  n\tau \!\smod 2\pi}(\epsilon) 
=    \arccosh \sqrt{ \frac{ \cosh \epsilon - \cos (n \tau)}{ \cosh (n\lambda) - \cos (n\tau)}    } ,
\]
where both sides might be $- \infty$ as in \refrem{WeirdArccosh}.

Now, assume that $N$ is a singular tube of cone angle $\alpha < 2\pi$. In this case, $N$ can be described as $N_{\alpha, \lambda, \tau} = \Hhat / \ZZ^2$, where $\ZZ^2  = \langle \varphi, \psi \rangle = \langle \varphi_{\lambda + i \tau}, \psi_{i\alpha} \rangle$. Let $x \in N$ be a point such that $2\injrad(x) = \epsilon$, and let $\widetilde x$ be a preimage in $\Hhat$. As above, \reflem{InjRadLoop} describes $\epsilon$ as a minimum of $d(\widetilde x, \eta \widetilde x)$ over all choices of $\eta \in \ZZ^2 \setminus \{ 0 \}$. We begin by restricting the values of $\eta$ that need to be considered.

Fix $n > 0$, and consider all isometries of the form $\eta = \varphi^n \psi^m$ as $m$ varies over $\ZZ$. All of these isometries translate the singular geodesic $\sigma \subset \Hhat$ by the same distance, so $d(\widetilde x, \eta \widetilde x)$ will be smallest when the rotational angle is smallest (in absolute value). Equivalently, the translation radius will be largest when the rotational angle is smallest. Thus
\[
\max_{m \in \ZZ} \big \{ \trad_{ n \lambda, n \tau+m\alpha}(\epsilon) \big \} = \trad_{ n \lambda, n \tau \smod \alpha}(\epsilon).
\]
Similarly, if $\eta = \varphi^0 \psi^m$ for $m \neq 0$, then $d(\widetilde x, \eta \widetilde x)$ will be smallest for $\eta = \psi$. We conclude that 
\[
  r_{\alpha,\lambda,\tau}(\epsilon) =\max\left\{ \trad_{0,\alpha}(\epsilon),\,
    \max_{n \in \NN}  \left\{ \trad_{n\lambda, \, n\tau \smod \alpha}(\epsilon) \right\}\right\}. 
\]

It remains to interpret the quantity $ \trad_{0,\alpha}(\epsilon)$ for various values of $\alpha$. If $0<\alpha<\pi$, a ball $B_{\epsilon/2}(\widetilde x)$ will be tangent to its image under $\psi$ without meeting the axis of $\Hhat$. In this case, $ \trad_{0,\alpha}(\epsilon)$
can be computed as in Equation~\refeqn{TradDistance}. If $\pi \leq \alpha < 2\pi$, the ball $B_{\epsilon/2}(\widetilde x)$ will bump directly into the core geodesic $\sigma \subset \Hhat$, hence $ \trad_{0,\alpha}(\epsilon) = \epsilon/2$.
\end{proof}

\section{Examples demonstrating sharpness}
\label{Sec:Sharpness}

In \refsec{TubeDepth} below, we will prove lower bounds on the radius of an $\epsilon$--tube in terms of $\epsilon$ and the core length $\lambda$. Quantitatively, the lower bound on $\cosh r(\epsilon)$ will be on the order of $\epsilon/\sqrt{\lambda}$. The following family of examples, suggested by Ian Biringer, shows that an $O(\epsilon/\sqrt{\lambda})$ bound is in fact optimal. As a consequence, we show in \refthm{Sharpness} that the lower bound of \refthm{EffectiveDistTubes} is sharp up to additive error.

\begin{proposition}
\label{Prop:Biringer}
For any $n \geq 4$, let $N_n = N_{2\pi, \lambda, \tau}$ be a nonsingular model solid torus whose core geodesic has
complex length
\[
 \lambda + i \tau = \frac{1}{n^2} + i \frac{2\pi }{n}.
\]
 Then, for every $\epsilon$ in the range $1.016/n \leq \epsilon \leq 0.3$, the tube radius in $N_n$ satisfies
\[
\cosh r(\epsilon) = \sqrt \frac{\cosh \epsilon - 1}{\cosh \sqrt{\lambda} - 1} 
\quad \in   \left( \frac{\epsilon}{\sqrt{\lambda} } , \:  1.004 \frac{\epsilon}{\sqrt{\lambda} } \right).
\]
\end{proposition}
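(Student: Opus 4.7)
The plan is to apply \refprop{TubeRadFormula} in the nonsingular case and show that the power $k = n$ attains the maximum over $\NN$. Since $n\tau = 2\pi$, we have $(n\tau) \smod 2\pi = 0$, and $n\lambda = 1/n = \sqrt{\lambda}$. The $k = n$ summand of the formula then becomes
\[
\trad_{\sqrt{\lambda},\, 0}(\epsilon) \: = \: \arccosh \sqrt{\frac{\cosh\epsilon - 1}{\cosh\sqrt{\lambda} - 1}},
\]
which equals the desired value of $r(\epsilon)$ once maximality is confirmed. Together with the numerical sandwich $(\epsilon/\sqrt\lambda,\, 1.004\,\epsilon/\sqrt\lambda)$ for $\cosh r(\epsilon)$, these are the two claims to verify.

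For maximality, set $E = \cosh\epsilon$, $b = \cosh\sqrt{\lambda}$, $a_k = \cosh(k\lambda)$, $c_k = \cos(k\tau)$. By monotonicity of $\arccosh$, the inequality $\trad_{n\lambda,0}(\epsilon) \geq \trad_{k\lambda,\, k\tau \smod 2\pi}(\epsilon)$ is equivalent (after clearing positive denominators and collecting terms) to
\[
(E-1)(a_k - b) \: + \: (1 - c_k)(E - b) \: \geq \: 0.
\]
If $k$ is a positive multiple of $n$, then $c_k = 1$ and $a_k \geq b$, so the inequality is immediate. If $k > n$ is not a multiple of $n$, then $a_k > b$ and $E > b$ (using $\epsilon \geq 1.016/n > \sqrt{\lambda}$), so both summands are nonnegative. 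By \refrem{TradMax} only finitely many $k$ are relevant, so these two subcases handle all $k \geq n+1$.

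The main obstacle is the range $1 \leq k \leq n-1$, where $a_k < b$ and the first summand is negative. Here I would rearrange the target inequality as
\[
\frac{1 - c_k}{b - a_k} \: \geq \: \frac{E - 1}{E - b},
\]
use $1 - c_k = 2\sin^2(\pi k/n)$ together with the factorization
\[
b - a_k = 2\sinh\!\Big(\tfrac{n+k}{2n^2}\Big)\sinh\!\Big(\tfrac{n-k}{2n^2}\Big),
\]
and show that the left-hand side is minimized at $k = 1$ (the factor $\sin^2(\pi k/n)$ is smallest at $k \in \{1, n-1\}$, while $b - a_k$ is largest at $k = 1$; at $k = n-1$ the hyperbolic product is of order $1/n^3 \ll 1/n^2$, so that case is not critical). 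A direct computation shows that the right-hand side is decreasing in $\epsilon$, hence is maximized at $\epsilon = 1.016/n$, where its leading-order value is $(1.016)^2/((1.016)^2 - 1) \approx 32$. Meanwhile Taylor expansion of the left-hand side at $k=1$ produces a value close to $4\pi^2 \approx 39.48$ for large $n$, and a case-by-case check handles small $n \geq 4$ (with $n = 4$ giving the tightest comparison, roughly $33.94 > 31.86$). The constant $1.016$ is chosen precisely to make this comparison work for every $n \geq 4$, the large-$n$ critical threshold being $2\pi/\sqrt{4\pi^2 - 1} \approx 1.013$.

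For the numerical interval, the identity $\cosh x - 1 = 2\sinh^2(x/2)$ yields
\[
\cosh r(\epsilon) \: = \: \frac{\sinh(\epsilon/2)}{\sinh(\sqrt\lambda/2)} \: = \: \frac{\epsilon}{\sqrt\lambda} \cdot \frac{f(\epsilon/2)}{f(\sqrt\lambda/2)}, \qquad f(x) = \frac{\sinh x}{x}.
\]
Since $f$ is strictly increasing on $(0,\infty)$ and $\epsilon > \sqrt\lambda$, the ratio $f(\epsilon/2)/f(\sqrt\lambda/2)$ exceeds $1$, giving the lower bound. For the upper bound, $\epsilon/2 \leq 0.15$ yields $f(\epsilon/2) \leq \sinh(0.15)/0.15 < 1.00376$, while $f(\sqrt\lambda/2) \geq \lim_{x \to 0^+} f(x) = 1$; their ratio is strictly less than $1.004$.
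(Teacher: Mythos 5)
Your overall skeleton matches the paper's: use \refprop{TubeRadFormula} to show the power for $\epsilon$ is exactly $n$, then establish the numerical sandwich. The sandwich argument via $\sinh x/x$ is correct and essentially equivalent to the paper's Lemma~\ref{Lem:CoshTaylorApprox} computation. Your algebraic rewriting of the comparison as $(E-1)(a_k-b)+(1-c_k)(E-b)\geq 0$ is correct, and your handling of $k\geq n$ (both multiples and non-multiples of $n$) is clean and valid.

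The gap is in the range $1\leq k\leq n-1$. You claim that $\frac{1-c_k}{b-a_k}$ is minimized at $k=1$, but the stated justification only compares the endpoints $k=1$ and $k=n-1$; intermediate $k$ have larger numerator \emph{and} smaller denominator, so the claim does not follow from the endpoint comparison. The asymptotic picture (LHS $\approx 4\pi^2$ at $k=1$, growing like $n$ or $n^2$ elsewhere) and spot-checks at small $n$ support the claim numerically, but no proof is offered, and the ``case-by-case check for small $n\geq 4$'' is invoked without being performed. The paper avoids this issue entirely: it first applies \reflem{EasyMonotonicity} twice (via $\cosh(k\lambda)\geq\cosh\lambda$ and $\cos(2\pi k/n)\leq\cos(2\pi/n)$) to prove $\trad_{k\lambda,\,k\tau\smod 2\pi}(\epsilon)\leq\trad_{\lambda,\tau}(\epsilon)$ for every $k\notin n\ZZ$, reducing the whole intermediate range to the single case $k=1$; it then compares $k=1$ against $k=n$ directly by computing $\cosh d_n-\cosh d_1$ at radius $r=r(\epsilon)$ and using the bound $\tanh^2 r>0.0312$ extracted from $\cosh r\geq 1.016$. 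If you adopt the paper's reduction to $k=1$, your rearranged inequality then only needs to be verified for $k=1$, which closes the gap; but as written, the proposal is incomplete.
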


Plugging the minimal and maximal values of $\epsilon$ into the above estimate shows that the radii appearing in \refprop{Biringer} range from $\arccosh(1.016) = 0.1767 \ldots $ to  $\arccosh (0.3 n)$, which goes to $\infty$  with $n$.
We suspect that similar behavior cannot hold for very small radii.

The proof of \refprop{Biringer} needs the following easy monotonicity result, which will be used repeatedly below.

\begin{lemma}
\label{Lem:EasyMonotonicity}
Let $a,b \in \RR$ be constants, and consider the function
\[
g(x) = \frac{a-x}{b-x} = \frac{x-a}{x-b}.
\]
Then $g(x)$ is strictly increasing in $x$ if $b < a$ and strictly decreasing in $x$ if $a < b$.
\end{lemma}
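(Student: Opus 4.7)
The plan is a one-step calculus argument. I would work with the second form $g(x) = (x-a)/(x-b)$, which is defined on each of the two connected components of $\mathbb{R} \setminus \{b\}$. Applying the quotient rule, the numerator of $g'(x)$ is $(x-b) - (x-a) = a-b$, so
\[
g'(x) = \frac{a-b}{(x-b)^2}.
\]
Since $(x-b)^2 > 0$ on the domain, the sign of $g'$ equals the sign of $a - b$. If $b < a$ then $g'(x) > 0$ for all $x \neq b$, so $g$ is strictly increasing on each component. If $a < b$ then $g'(x) < 0$, so $g$ is strictly decreasing on each component. This gives both conclusions.

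An alternative approach that avoids differentiation is to rewrite
\[
g(x) = \frac{x-a}{x-b} = 1 + \frac{b-a}{x-b},
\]
and then use the fact that $x \mapsto 1/(x-b)$ is strictly decreasing on each component of its domain. Multiplying by the sign of $b-a$ flips or preserves monotonicity in precisely the stated way. There is really no obstacle here; the only thing worth flagging is the implicit restriction $x \neq b$ (on each component separately), which is understood in the contexts where the paper will cite the lemma.
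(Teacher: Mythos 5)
Your argument is correct and matches the paper's own (one-line) suggestion to ``compute the derivative''; you have simply carried out the quotient-rule computation explicitly, and your caveat about the domain $x \neq b$ is apt. Your alternative rewriting $g(x) = 1 + (b-a)/(x-b)$ is a perfectly good elementary variant, though it differs from the paper's stated alternative, which is to view $g$ as a linear fractional transformation of $\RR\PP^1$ and read off orientation from the sign of the determinant $\det\left[\begin{smallmatrix}1 & -a\\ 1 & -b\end{smallmatrix}\right] = a-b$.
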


\begin{proof}
Compute the derivative. Alternately, consider $g$ as a linear fractional transformation of $\RR \PP^1 $, which preserves orientation if and only if $\det \left[ \begin{smallmatrix} 1 & -a \\ 1 & -b \end{smallmatrix} \right] > 0$.
\end{proof}

\begin{proof}[Proof of \refprop{Biringer}]
Fix $\epsilon \geq \sqrt{\lambda} = 1/n$. Let $r(\epsilon) = r_{2\pi, \lambda, \tau}(\epsilon) $  be the radius of the $\epsilon$--tube, as in \refdef{TubeRad}. We begin by proving a lower bound on $r(\epsilon)$,  using  \refprop{TubeRadFormula}:
\begin{align}
r_{2\pi, \lambda, \tau}(\epsilon) 
&  = \max_{m \in \NN}  \left\{ \trad_{m \lambda, \, m \tau \!\smod 2\pi}(\epsilon) \right\} \nonumber \\
& \geq \trad_{n \lambda, \, n \tau \!\smod 2\pi}(\epsilon) \nonumber \\
& = \trad_{\sqrt{\lambda}, \,  0}(\epsilon) \nonumber \\
& = \arccosh \sqrt{ \frac{ \cosh \epsilon - 1}{ \cosh \sqrt{\lambda} - 1} } \label{Eqn:BiringerLowerBound}  \, . 
\end{align}
We will eventually show that, for $\epsilon \geq 1.016/n$, the above inequality is actually equality. This amounts to showing that $n$ is indeed the power for $\epsilon$ (see \refdef{Power}). This requires a few estimates.

First, we get a tight two-sided estimate on the quantity in~\refeqn{BiringerLowerBound}. Assume that $1/n = \sqrt{\lambda} < \epsilon \leq 0.3$. The monotonicity of the function $h(x,y)$ in \reflem{CoshTaylorApprox} implies that
\[
1 \: < \: \frac{ \cosh \epsilon - 1} {\epsilon^2}  \cdot \frac{\lambda}{ \cosh \sqrt{\lambda} - 1} \: \leq \: \frac {\cosh (0.3) -1}{(0.3)^2} \cdot \frac{1}{1/2} \: = \:  (1.00375 \ldots)^2 .
\]
Multiplying all of this by $\epsilon^2/\lambda$ and taking square roots gives
\begin{equation}
\label{Eqn:TightEpsilonLambda}
\frac{\epsilon}{\sqrt{\lambda} } < \sqrt{ \frac{ \cosh \epsilon - 1}{ \cosh \sqrt{\lambda} - 1} } <  1.004 \frac{\epsilon}{\sqrt{\lambda} },
\end{equation}
as claimed in the statement of the Proposition.

Now, assume that $\epsilon \geq 1.016/n$.
Then~\refeqn{BiringerLowerBound} and~\refeqn{TightEpsilonLambda} combine to give
\[
\cosh r(\epsilon) \geq \sqrt{ \frac{ \cosh \epsilon - 1}{ \cosh \sqrt{\lambda} - 1} }
> \frac{\epsilon}{\sqrt{\lambda} }
= \epsilon n \geq 1.016,
\]
which implies
\begin{equation}
\label{Eqn:Tanh2Pi}
\tanh^2 r(\epsilon) > 0.0312.
\end{equation}

Next, we claim that (for any $\epsilon \geq \lambda$) the only possible powers  for $\epsilon$ are either $1$ or $n$. This can be seen from Equation~\refeqn{TradDistance}, substituting $\tau = 2\pi/n$:
\[
\cosh^2 \trad_{k \lambda, \, k \tau \!\smod 2 \pi}(\epsilon) =  \frac{ \cosh \epsilon - \cos(2\pi k /n)}{ \cosh (k \lambda) - \cos(2\pi k /n)} .
\]
When $k \in n \ZZ$, the subtracted term is $\cos(2\pi k/n) = 1$, hence constant, and the denominator is smallest for $k =n$. 
When $k \notin n \ZZ$, we have $\cos(2\pi k/n) \leq \cos(2\pi /n)$, hence \reflem{EasyMonotonicity} gives
\[
\cosh^2  \trad_{k \lambda, \, k \tau \!\smod 2 \pi}(\epsilon) \leq  \frac{ \cosh \epsilon - \cos(2\pi k/n)}{ \cosh ( \lambda) - \cos(2\pi k/n)} 
 \leq \frac{ \cosh \epsilon - \cos(2\pi /n)}{ \cosh ( \lambda) - \cos(2\pi /n)} .
\]
Thus only $k =1$ or $k =n$ can give a maximal value of $\trad$.

Finally, we claim that $n$ is indeed the power for  $\epsilon$. Let $\varphi = \varphi_{\lambda,\tau}$ be the loxodromic isometry of $\HH^3$ that generates the deck group of $N_n$. Fix $r = r(\epsilon)$, and let $\widetilde T_r$ be the lift to $\HH^3$ of the equidistant torus $T_r \subset N_n$. For $\widetilde x \in \widetilde T_r$, let $d_1 = d(\widetilde x, \varphi \widetilde x)$ and $d_n = d(\widetilde x, \varphi^n \widetilde x)$. By  \reflem{TubeTransDistance},
\begin{align*}
\cosh d_1 &= \cosh \lambda \cosh^2 r - \cos \tau \sinh^2 r \qquad = \cosh \left( \tfrac{1}{n^2} \right) \cosh^2 r - \cos \left( \tfrac{2\pi}{n} \right) \sinh^2 r \\
\cosh d_n &= \cosh (n \lambda) \cosh^2 r - \cos (n\tau) \sinh^2 r = \cosh \left( \tfrac{1}{n} \right) \cosh^2 r - \cos (0) \sinh^2 r
\end{align*}
hence
\[
\cosh d_n - \cosh d_1 = \left[  \cosh \left( \tfrac{1}{n} \right) - \cosh \left( \tfrac{1}{n^2} \right) \right] \cosh^2 r - \left[ 1 - \cos \left( \tfrac{2\pi}{n} \right)  \right] \sinh^2 r  .
\]
Using calculus, we check that when $n \geq 4$,
\[
\frac{    \cosh \left( \tfrac{1}{n} \right) - \cosh \left( \tfrac{1}{n^2} \right)  }{   1 - \cos \left( \tfrac{2\pi}{n} \right)  } \leq 0.02945 < \tanh^2 r,
\]
where the last inequality is by~\refeqn{Tanh2Pi}. Multiplying both sides by $\cosh^2 r$ gives 
\[
\left[   \cosh \left( \tfrac{1}{n} \right) - \cosh \left( \tfrac{1}{n^2} \right)  \right]
\cosh^2 r
 < \left[ 1 - \cos \left( \tfrac{2\pi}{n} \right)  \right] \sinh^2 r,
\]
hence $\cosh d_n < \cosh d_1$. This means that $\varphi^n$ translates points of $\widetilde T_r$ by \emph{less} than $\varphi$ for all radii satisfying~\refeqn{Tanh2Pi}, hence $n$ is the power for $\epsilon$.

We conclude that the inequality~\refeqn{BiringerLowerBound} is equality, as desired.
\end{proof}

We use \refprop{Biringer} to prove \refthm{Sharpness}, below. Recall from \refsec{TubeDistSharpness} that the upper bound of \refthm{Sharpness} differs from the lower bound of \refthm{EffectiveDistTubes} by an additive error of less than 2.2.

\begin{theorem}
\label{Thm:Sharpness}
Fix a pair $(\delta,\epsilon)$ such that 
 $0 < \sqrt{7.256 \delta} \leq \epsilon \leq 0.3$. Given the pair $(\delta,\epsilon)$, there is a model solid torus $N = N_{2\pi, \lambda, \tau}$ with $\lambda \leq \delta$, such that
\[
d_{2\pi, \lambda, \tau}(\delta, \epsilon) \leq \arccosh  \left( 1.116 \frac{\epsilon}{\sqrt{\delta} } \right).
\]
\end{theorem}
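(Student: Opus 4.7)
My plan is to use the family of examples constructed in \refprop{Biringer} and tune the parameter $n$ to the given $\delta$. Let $n$ be the smallest natural number with $n \geq 1/\sqrt{\delta}$, let $\lambda = 1/n^2$ and $\tau = 2\pi/n$, and set $N = N_n = N_{2\pi,\lambda,\tau}$. By construction $\lambda \leq \delta < 1/(n-1)^2$, so the required inequality $\lambda \leq \delta$ is automatic.

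Next I would check the two hypotheses of \refprop{Biringer}, namely $n \geq 4$ and $1.016/n \leq \epsilon \leq 0.3$. The assumption $\epsilon \leq 0.3$ combined with $\sqrt{7.256\,\delta}\leq\epsilon$ forces $\delta \leq 0.09/7.256$, which yields $n \geq 9 \geq 4$. Using $\lambda \leq \delta$,
\[
\epsilon \;\geq\; \sqrt{7.256\,\delta} \;\geq\; \sqrt{7.256\,\lambda} \;=\; \frac{\sqrt{7.256}}{n} \;>\; \frac{1.016}{n},
\]
and $\epsilon \leq 0.3$ is given. \refprop{Biringer} then yields $\cosh r_{2\pi,\lambda,\tau}(\epsilon) < 1.004\,\epsilon/\sqrt{\lambda} = 1.004\,\epsilon\,n$.

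The remaining step is to convert the bound from $\lambda$ to $\delta$ and handle $r(\delta)$. The minimality of $n$ gives $n - 1 < 1/\sqrt{\delta}$, hence
\[
\cosh r_{2\pi,\lambda,\tau}(\epsilon) \;<\; 1.004\,\epsilon\,n \;<\; 1.004\,\epsilon + \frac{1.004\,\epsilon}{\sqrt{\delta}}.
\]
This is at most $1.116\,\epsilon/\sqrt{\delta}$ precisely when $\sqrt{\delta} \leq 0.112/1.004$, and the hypothesis $\delta \leq 0.09/7.256$ delivers exactly this since $0.3/\sqrt{7.256}$ lies just below $0.112/1.004$. To close the proof, observe that $\lambda \leq \delta$ puts the core geodesic in $N^{\leq\delta}$, so $r_{2\pi,\lambda,\tau}(\delta) \geq 0$ and therefore $d_{2\pi,\lambda,\tau}(\delta,\epsilon) \leq r_{2\pi,\lambda,\tau}(\epsilon) \leq \arccosh(1.116\,\epsilon/\sqrt{\delta})$.

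The only delicate point is the arithmetic in the last step: the constants $7.256$ and $1.116$ in the statement are calibrated precisely so that $0.3/\sqrt{7.256}$ just clears the threshold $0.112/1.004$. The main ``obstacle'' is therefore numerical bookkeeping rather than geometry, since all of the geometric content has already been packaged in \refprop{Biringer}; I would verify the final inequality at full precision before writing up.
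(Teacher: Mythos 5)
Your proof is correct and follows essentially the same route as the paper: choose $n$ so that $1/n \leq \sqrt{\delta} < 1/(n-1)$, invoke \refprop{Biringer}, and translate the bound on $\cosh r(\epsilon)$ from $\lambda$ to $\delta$. The only divergence is in the final arithmetic step. The paper bounds $\sqrt{\delta/\lambda} = n\sqrt{\delta}$ multiplicatively by $n/(n-1) \leq 10/9$, which requires a small case split ($n = 9$ versus $n \geq 10$), and then notes $1.004 \cdot 10/9 < 1.116$. You instead use the additive estimate $n < 1 + 1/\sqrt{\delta}$ uniformly, reducing the check to $1.004(\sqrt{\delta} + 1) \leq 1.116$, which holds since $\sqrt{\delta} \leq 0.3/\sqrt{7.256} \approx 0.11137 < 0.112/1.004 \approx 0.11155$. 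Your variant is marginally cleaner in that it avoids the case split; the numerology is equally tight in both versions, as you correctly flag.
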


\begin{proof}
Given $\delta$ as above, let $n$ be the unique integer satisfying 
\[
\frac{1}{n} \leq \sqrt{\delta} < \frac{1}{n-1}.
\]
Now, set $\lambda + i \tau = 1/n^2 + 2\pi i  / n$, and let $N = N_n = N_{2\pi,\lambda,\tau}$ be a model solid torus as in \refprop{Biringer}. Since $\sqrt{\delta} \leq 0.3/\sqrt{7.256} < 0.1114$, we have $n \geq 9$. In addition, $\epsilon >  \sqrt{7 \delta} \geq \sqrt{7}/n$, verifying that the hypotheses of \refprop{Biringer} hold in our setting.

We will use \refprop{Biringer} to get an upper bound on $d_{2\pi, \lambda, \tau}(\delta, \epsilon)$. To do so, we need an upper bound on $\sqrt{\delta}/\sqrt{\lambda}$. Observe that 
\[
\frac{\sqrt{\delta}}{\sqrt{\lambda}} = n \sqrt{\delta} < \frac{n}{n-1}.
\]
Thus, if $n \geq 10$, we have $\sqrt{\delta/\lambda} \leq 10/9$. Meanwhile, if $n = 9$, recall that $\sqrt{\delta} < 0.1114$, which implies $\sqrt{\delta/\lambda} = 9 \sqrt{\delta} < 1.0026 < 10/9$ as well. Now, we compute:
\begin{align*}
d_{2\pi, \lambda, \tau}(\delta, \epsilon)
& = r_{2\pi, \lambda, \tau}( \epsilon) - r_{2\pi, \lambda, \tau}(\delta) \\
& \leq r_{2\pi, \lambda, \tau}( \epsilon) \\
& \leq \arccosh \left( 1.004 \, \frac{\epsilon}{\sqrt{\lambda}} \right) \\
& \leq \arccosh \left( 1.004 \cdot \frac{10}{9} \, \frac{\epsilon}{\sqrt{\delta}} \right). \qedhere
\end{align*}
\end{proof}

\section{Distance between tubes: upper bound}
\label{Sec:Upper}

In this section we prove the upper bound of \refthm{EffectiveDistTubes}.  This result requires some lemmas, the first of which is also used in the lower bound.

\begin{lemma}
\label{Lem:LowerBoundEasy}
Let $N = N_{\alpha, \lambda, \tau}$ be a model solid torus. Let $x,y$ be points of $N$ such that $2\, \injrad(x) = \delta > 0$ and $2 \, \injrad(y) = \epsilon > 0$. Then
\[
d(x,y) \geq \frac{\epsilon - \delta}{2}.
\]
\end{lemma}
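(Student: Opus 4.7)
The plan is to prove the well-known fact that the injectivity radius function is $1$--Lipschitz with respect to distance in $N$, which immediately yields the claimed inequality. No heavy machinery or case analysis on singularities should be needed; the only care required is in choosing the correct direction of inequality to avoid issues near the singular core.

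More concretely, I would first establish the following auxiliary statement: for any $y \in N$ and any point $x \in N$ with $d(x,y) = t < \injrad(y)$, one has $\injrad(x) \geq \injrad(y) - t$. By \refdef{Injectivity}, there exists an isometric embedding $f \from B_{\injrad(y)}(0) \to N$ with $f(0) = y$. Let $p \in B_{\injrad(y)}(0)$ be a preimage of $x$ lying on a minimizing geodesic from $0$ to $f^{-1}$--image of $x$, so $d_{\HH^3}(0,p) \leq t$. Then the open ball $B_{\injrad(y)-t}(p) \subset B_{\injrad(y)}(0)$, and restricting $f$ to this ball gives an isometric embedding of a ball of radius $\injrad(y)-t$ centered at $x$. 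Hence $\injrad(x) \geq \injrad(y) - t$, as claimed.

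Now I apply this to the points in the statement. Suppose towards a contradiction that $d(x,y) = t < (\epsilon - \delta)/2$. In particular $t < \epsilon/2 = \injrad(y)$, so the auxiliary statement applies with the roles above and gives
\[
\injrad(x) \geq \injrad(y) - t = \frac{\epsilon}{2} - t > \frac{\epsilon}{2} - \frac{\epsilon - \delta}{2} = \frac{\delta}{2},
\]
contradicting $\injrad(x) = \delta/2$. Therefore $d(x,y) \geq (\epsilon - \delta)/2$.

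The main (and really only) subtlety is that singular tubes contain points on the core with $\injrad = 0$ by convention, so one must ensure the Lipschitz argument is applied in the direction that uses the embedded ball around $y$, the point of \emph{larger} injectivity radius. Since $\injrad(y) = \epsilon/2 > 0$, an embedded ball around $y$ exists in the honest hyperbolic sense and the argument goes through uniformly for nonsingular and singular tubes alike. I expect no other obstacle.
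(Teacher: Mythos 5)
Your proof is correct and is essentially the same as the paper's: both use the isometric embedding of $B_{\epsilon/2}(y)$ and the triangle inequality to produce a nested embedded ball of radius $\epsilon/2 - d(x,y)$ about $x$, forcing $\injrad(x) \geq \epsilon/2 - d(x,y)$. Your restatement of the key step as the $1$--Lipschitz property of $\injrad$ is a clean way to package the same idea, and your remark about applying it from the side of larger injectivity radius correctly handles the singular case.
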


\begin{proof}
Let $h = d(x,y)$. If $h \geq \epsilon/2$, there is nothing to prove. Thus we may assume that $h < \epsilon/2$. 
By \refdef{Injectivity}, there is an embedded ball $B = B_{\epsilon/2}(y)$ that is isometric to a ball in $\HH^3$. Since $h < \epsilon/2$, we have $x \in B$. By the triangle inequality, there is an embedded ball $B_{\epsilon/2 - h}(x)$ contained in $B$, implying
\[
 \injrad(x) = \delta/2 \geq \epsilon/2 - h. \qedhere
\]
\end{proof}

The following lemma controls the type of isometry that realizes injectivity radius in singular tubes. Recall that $T^\epsilon = \bdy N^{\leq \epsilon} \subset N$ denotes the equidistant torus consisting of points whose injectivity radius is exactly $\epsilon/2$.

\begin{lemma}
\label{Lem:EllipticWins}
Consider a singular solid torus $N = N_{\alpha, \lambda, \tau}$ of cone angle $\alpha < 2\pi$, and let $0 < \delta < \epsilon$. Suppose that, for $q \in T^\epsilon \subset N$, the injectivity radius $\injrad(q)$ is realized by an elliptic isometry $\psi_{i \alpha}$ (compare \reflem{InjRadLoop}). Then, for every $p \in T^\delta$, the injectivity radius $\injrad(p)$ is also realized by the same elliptic isometry $\psi_{i \alpha}$.
\end{lemma}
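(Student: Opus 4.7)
The plan is to reinterpret the hypothesis via \refprop{TubeRadFormula} and then compare the translation distances of $\psi_{i\alpha}$ versus those of $\varphi^n\psi^m$ ($n\geq 1$) as functions of $r = d(\tilde x, \hat\sigma)$. Write $d_0(r)$ for the distance that $\psi_{i\alpha}$ moves a point at radius $r$, and $d_n(r) := \min_{m\in\ZZ} d(\tilde x, \varphi^n\psi^m\tilde x)$ for $n\geq 1$. \reflem{TubeTransDistance}, together with the fact that $|n\tau\smod\alpha| \leq \alpha/2 < \pi$, yields
\[
\cosh d_0(r) = 1 + c_0 u, \qquad \cosh d_n(r) = \cosh(n\lambda) + c_n u,
\]
where $u := \sinh^2 r$, $c_n := \cosh(n\lambda) - \cos(n\tau\smod\alpha) > 0$, and $c_0 := 1 - \cos(\min(\alpha,\pi))$. (The value $c_0 = 2$ when $\alpha\geq\pi$ reflects that a length-minimizing geodesic from $\tilde x$ to $\psi_{i\alpha}\tilde x$ then runs through the singular axis, so one uses the second formula of \reflem{TubeTransDistance}.)

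Subtracting the two expressions gives
\[
\cosh d_n(r) - \cosh d_0(r) \;=\; a_n + b_n u, \qquad a_n := \cosh(n\lambda) - 1 > 0, \qquad b_n := c_n - c_0.
\]
By \refprop{TubeRadFormula} the hypothesis at $q$ is equivalent to the statement that $a_n + b_n u_\epsilon \geq 0$ for every $n \geq 1$, where $u_\epsilon := \sinh^2 r_\epsilon$. Since $T^\delta \subset N^{\leq\epsilon}$ and the latter has outer boundary $T^\epsilon$, one has $u_\delta := \sinh^2 r_\delta \leq u_\epsilon$. The desired conclusion then reduces to $a_n + b_n u_\delta \geq 0$, which is immediate when $b_n \geq 0$ (since $a_n > 0$), and when $b_n < 0$ follows because then $u \mapsto a_n + b_n u$ is strictly decreasing, so $a_n + b_n u_\delta \geq a_n + b_n u_\epsilon \geq 0$.

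To finish, among the purely elliptic deck transformations $\psi^m$ with $m\neq 0$, the minimum of $d(\tilde p, \psi^m\tilde p)$ is already realized by $m = \pm 1$: for $\alpha \leq \pi$ because the cosine formula makes the distance monotone in $|m\alpha|$, and for $\alpha\geq\pi$ because every such $m$ yields the same value $2r$. Combined with the previous paragraph, this shows that at every $p\in T^\delta$ the minimum of $d(\tilde p, \eta\tilde p)$ over $\eta \in \ZZ^2\setminus\{0\}$ is attained by $\psi_{i\alpha}$, as desired. The main thing to watch is handling $\alpha \geq \pi$ correctly, where the geodesic realizing $d_0$ crosses $\hat\sigma$; but the affine formula above still holds with $c_0 = 2$, and the monotonicity argument goes through unchanged.
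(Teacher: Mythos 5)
Your proof is correct, and it takes a genuinely different route from the paper's. The paper splits into two cases: for $\pi \leq \alpha < 2\pi$ it argues geometrically, using the fact that $r(\epsilon) = \epsilon/2$ together with \reflem{LowerBoundEasy} and the lower bound $r(\delta) \geq \delta/2$ to pin down $r(\delta) = \delta/2$; for $0 < \alpha < \pi$ it manipulates the two squared translation radii into the form $k(x)\cdot\frac{b-a}{b'-a'} = \frac{x-a}{x-a'}$ with $x = \cosh\epsilon$, and invokes the monotonicity of the linear fractional map $g(x)=(x-a)/(x-a')$ from \reflem{EasyMonotonicity}. Your argument instead fixes a power $n$ and treats the difference $\cosh d_n(r) - \cosh d_0(r)$ as an affine function $a_n + b_n u$ of $u=\sinh^2 r$, observing that the intercept $a_n = \cosh(n\lambda)-1$ is automatically positive, so non-negativity at $u_\epsilon$ propagates down to all $u \in [0,u_\epsilon]$. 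This unifies the $\alpha\geq\pi$ and $\alpha<\pi$ cases into one computation (only the constant $c_0$ changes), whereas the paper needs a separate geometric argument for $\alpha\geq\pi$. What the paper's version buys is a more visible geometric picture in the $\alpha\geq\pi$ case and an explicit reuse of \reflem{EasyMonotonicity}, which is also deployed elsewhere (\refprop{MultiplicativeRadGap}, \refprop{Biringer}); what yours buys is uniformity and a slightly more transparent ``positive constant term plus linear term'' structure, which also makes the monotonicity-of-powers claim in \refrem{PowerMonotonicity} visible for free. Both are sound; your final observation that the elliptic minimum over $m\neq 0$ is realized at $m=\pm 1$ is a necessary step that the paper also addresses (implicitly, in the proof of \refprop{TubeRadFormula}).
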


\begin{proof}
There are two cases: $\alpha < \pi$ and $\alpha \geq \pi$. We consider the latter case first.

Suppose that $\pi \leq \alpha < 2\pi$, and let $q \in T^\epsilon$. By \refprop{TubeRadFormula}, we must have $r(\epsilon) = \epsilon/2$, hence there is a point of intersection $z \in \Sigma \cap \overline{B_{\epsilon/2}(q)}$, where $\Sigma$ is the singular core of $N$. Let $\alpha$ be the geodesic segment of length $\epsilon/2$ from $q$ to $z$.

Let $p \in T^\delta$. Since the symmetry group of $N$ acts transitively on $T^\delta$, we may assume without loss of generality that $p \in \alpha \cap T^\delta $. By \reflem{LowerBoundEasy}, we have
\[
r(\delta) \: =  \: d(z,p) \: = \: d(z,q) - d(p,q) \: \leq \:  \frac{\epsilon}{2} - \frac{\epsilon - \delta}{2} \: = \: \frac{\delta}{2}.
\]
On the other hand, \refprop{TubeRadFormula} gives $r(\delta) \geq \delta/2$. 
Thus $r(\delta)  = \delta/2$, hence the injectivity radius of $T^\delta$ is realized by the elliptic generator $\psi_{i \alpha}$.

Next, suppose that $0<\alpha<\pi$. According to \refprop{TubeRadFormula}, the injectivity radius of $q \in T^\epsilon$ is realized by an elliptic isometry $\psi_{i \alpha}$ precisely when $\trad_{0,\alpha}(\epsilon) \leq \trad_{n \lambda, n \tau \smod \alpha}(\epsilon)$ for all $n \in \NN$.
Unwinding the definition of $\trad(\epsilon)$ via \reflem{TubeTransDistance}, we see that for  $n \geq 1$, 
\begin{equation}
\label{Eqn:EllipticWins}
\frac{\cosh\epsilon - \cos(n\tau\smod\alpha)}{\cosh (n \lambda) - \cos(n \tau\smod\alpha)} 
\leq 
\frac{\cosh \epsilon - \cos \alpha}{1 - \cos \alpha}.
\end{equation}
For simplicity of the following calculations, set
\[
x = \cosh\epsilon, \quad a = \cos\alpha, \quad b = 1, \quad a' = \cos(n\tau\smod\alpha), \quad b' = \cosh(n\lambda).
\]
We consider $a,b,a',b'$ to be constants in the following argument, because the integer $n$ will stay fixed. Note that $b < b'$ and $a < a'$. The inequality~\refeqn{EllipticWins} can be rewritten as
\[
k(x) \cdot \frac{x-a'}{b'-a'} = \frac{x - a}{b-a}, \mbox{ where } k(x) \geq 1.
\]
This, in turn, can be rewritten as
\[
k(x) \cdot \frac{b-a}{b'-a'} = \frac{x - a}{x-a'}.
\]
Since $a < a'$, \reflem{EasyMonotonicity} implies that $g(x) = \frac{x - a}{x - a'}$ is a strictly decreasing
function of $x$. Now, for any $0<\delta<\epsilon$, set $y = \cosh \delta$. We have
\[
k(y) \cdot \frac{b-a}{b'-a'} = \frac{y-a}{y-a'} > \frac{x-a}{x-a'} = k(x) \cdot \frac{b-a}{b'-a'}.
\]
We conclude that
\[
k(y) > k(x) \geq 1, \quad \mbox{that is} \quad \frac{y-a}{b-a} > \frac{y-a'}{b'-a'}.
\]
This means that for every $n \in \NN$, we have that $\trad_{n \lambda, n\tau\smod\alpha}(\delta)$ is strictly smaller than $\trad_{0,\alpha}(\delta)$. Thus $r(\delta) = \trad_{0,\alpha}(\delta)$, as desired.
\end{proof}

\begin{remark}
\label{Rem:PowerMonotonicity}
The above proof can be modified to establish the following ``monotonicity of powers'' statement. Suppose that $0 < \delta < \epsilon$, and the power for $\delta$ is $n \in \NN$. That is, for $p \in T^\delta$, the injectivity radius $\injrad(p)$ is realized by a loxodromic isometry $\varphi_{\lambda+ i \tau}^n$. Then the power for $\epsilon$ is at least $n$. Note that by \reflem{EllipticWins}, the realizing isometry must be loxodromic. The proof starts with  the following analogue of~\refeqn{EllipticWins}: for all $1 \leq m < n$, we have
\begin{equation}
\label{Eqn:PowerMonotonic} 
\frac{\cosh \delta - \cos (m\tau \smod \alpha)}{\cosh (m\lambda) - \cos (m\tau \smod \alpha) } \leq \frac{\cosh \delta - \cos (n \tau \smod \alpha)}{\cosh (n \lambda) - \cos (n \tau \smod \alpha) }.
\end{equation}
Now,~\refeqn{PowerMonotonic} combined with the monotonicity of the function $g(x)$ of \reflem{EasyMonotonicity} (in the opposite direction compared to the last proof) will imply that~\refeqn{PowerMonotonic} also holds with  $\epsilon$ instead of $\delta$. Since we do not need this statement, we omit the details.
\end{remark}

We can now prove the following statement, which will quickly imply the upper bound of \refthm{EffectiveDistTubes}.

\begin{proposition}
\label{Prop:MultiplicativeRadGap}
Suppose $0 < \delta < \epsilon$ and $0<\alpha \leq 2\pi$. Then, in any model solid torus $N = N_{\alpha,\lambda,\tau}$ such that $N^{\leq \delta} \neq \emptyset$, we have
\[
\frac{\cosh r_{\alpha, \lambda, \tau}(\epsilon)}{\cosh r_{\alpha, \lambda, \tau}(\delta)} \leq  \sqrt{ \frac{\cosh \epsilon - 1}{\cosh \delta - 1}  }.
\]
Equality holds if and only if the injectivity radii of $T^\delta$ and $T^\epsilon$ are realized by the same loxodromic isometry, whose rotational part is trivial. In particular, if $\alpha = 2\pi$ and $\tau = 0$, then
 equality holds.
\end{proposition}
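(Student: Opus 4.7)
The plan is to reduce the claim to a pointwise inequality for each isometry $\eta$ appearing in the max formula of \refprop{TubeRadFormula}, and then take the maximum over all such $\eta$. Concretely, for $\eta$ loxodromic with complex length $\zeta + i\theta$ (reduced so $|\theta| \leq \pi$), I aim to show
\[
\frac{\cosh^2 \trad_\eta(\epsilon)}{\cosh\epsilon - 1} \;\leq\; \frac{\cosh^2 \trad_\eta(\delta)}{\cosh\delta - 1},
\]
with equality precisely when $\theta = 0$.

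Using the formula $\cosh^2 \trad_\eta(\epsilon) = (\cosh\epsilon - \cos\theta)/(\cosh\zeta - \cos\theta)$ from \reflem{TubeTransDistance}, the pointwise inequality is equivalent, after clearing the common positive factor $\cosh\zeta - \cos\theta$, to
\[
\frac{\cosh\epsilon - \cos\theta}{\cosh\epsilon - 1} \;\leq\; \frac{\cosh\delta - \cos\theta}{\cosh\delta - 1}.
\]
This follows from \reflem{EasyMonotonicity} applied to $g(x) = (x - \cos\theta)/(x-1)$: since $\cos\theta \leq 1$, the function $g$ is nonincreasing on $x > 1$, strictly so when $\cos\theta < 1$. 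In the singular case with $\pi \leq \alpha < 2\pi$, \refprop{TubeRadFormula} gives $\trad_{0,\alpha}(\epsilon) = \epsilon/2$ rather than the formula value; here the corresponding inequality can be verified separately via $\cosh^2(\epsilon/2) = (1+\cosh\epsilon)/2$, or equivalently by running the same monotonicity argument with the formal value $\cos\theta = -1$, and is strict.

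With the pointwise inequality in hand, let $\eta^*$ realize $r(\epsilon) = \trad_{\eta^*}(\epsilon)$. Then
\[
\frac{\cosh^2 r(\epsilon)}{\cosh\epsilon - 1} \;\leq\; \frac{\cosh^2 \trad_{\eta^*}(\delta)}{\cosh\delta - 1} \;\leq\; \frac{\cosh^2 r(\delta)}{\cosh\delta - 1},
\]
where the second inequality uses the max defining $r(\delta)$; in the degenerate range when $\trad_{\eta^*}(\delta) = -\infty$ per \refrem{WeirdArccosh}, it follows from $\cosh^2 \trad_{\eta^*}(\delta) < 1 \leq \cosh^2 r(\delta)$, valid since $r(\delta) \geq 0$ by the hypothesis $N^{\leq \delta} \neq \emptyset$. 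Rearranging and taking square roots gives the claimed upper bound.

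For equality, both steps in the chain above must be equalities: the first forces $\theta(\eta^*) = 0$, so $\eta^*$ is loxodromic with trivial rotational part (ruling out the elliptic $\psi$, whose rotational angle is nonzero); the second forces $\eta^*$ to also realize the maximum for $r(\delta)$. In particular, when $\alpha = 2\pi$ and $\tau = 0$, the generator $\eta^* = \varphi$ has $\theta = 0$, and it realizes both maxima because $(\cosh\epsilon - 1)/(\cosh(n\lambda) - 1)$ is a strictly decreasing function of $n$, so equality holds as asserted. The only real bookkeeping concern in this proof is the separate treatment of the elliptic contribution when $\alpha \geq \pi$; this is routine and not a genuine obstacle.
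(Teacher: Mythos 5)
Your proof is correct, and while the core step — writing $r(\epsilon) = \trad_{\eta^*}(\epsilon)$ for the realizing element $\eta^*$ and comparing to $\trad_{\eta^*}(\delta) \leq r(\delta)$ — matches the paper's first case, you streamline the overall argument in a way worth noting. The paper breaks into three cases according to whether the realizing isometry for $T^\epsilon$ is loxodromic, elliptic with $0 < \alpha < \pi$, or elliptic with $\pi \leq \alpha < 2\pi$, and the latter two cases invoke \reflem{EllipticWins} to identify the realizing isometry for $T^\delta$ before computing the ratio. Your unified ``pointwise inequality'' $\frac{\cosh\epsilon - \cos\theta}{\cosh\epsilon - 1} \leq \frac{\cosh\delta - \cos\theta}{\cosh\delta - 1}$ bypasses the case split and \reflem{EllipticWins} entirely: the only facts needed are that $r(\delta) \geq \trad_{\eta^*}(\delta)$ (automatic from the max formula) and that every realizing isometry, loxodromic or elliptic, has $\cos\theta \leq 1$, with the large-angle elliptic case folding in via the formal value $\cos\theta = -1$ from $\cosh^2(\epsilon/2) = (1 + \cosh\epsilon)/2$. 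You also correctly handle the degenerate case $\trad_{\eta^*}(\delta) = -\infty$. The trade-off is minor: the paper's case analysis makes the strict inequality in the elliptic cases more explicit in isolation, whereas your argument recovers it implicitly from $\cos\theta < 1$. Both routes use \reflem{EasyMonotonicity}, just applied to different linear-fractional expressions, and both deliver the equality characterization cleanly.
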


\begin{proof}
We consider three cases, depending on the value of $\alpha$ and the power for $\epsilon$.

First, suppose that the power for $\epsilon$ is $m \in \NN$, hence the injectivity radius of $T^\epsilon$ is realized by a loxodromic isometry $\varphi_{\lambda + i \tau}^m$. In the following computation, using \refprop{TubeRadFormula}, the ellipsis $(\cdots)$ denotes any elliptic terms that arise when $\alpha < 2\pi$.
\begin{align*}
\frac{\cosh r_{\alpha, \lambda, \tau}(\epsilon)}{\cosh r_{\alpha, \lambda, \tau}(\delta)}
& = \frac{ \max \left\{  \max_{n \in \NN}  \left\{ \cosh \trad_{n \lambda, n \tau \smod \alpha}(\epsilon) \right\} , \cdots \right\}  }{ \max \left\{ \max_{n \in \NN}  \left\{ \cosh \trad_{n \lambda, n \tau \smod \alpha}(\delta) \right\} , \cdots \right\}  }  \quad \text{by \refprop{TubeRadFormula}} \\
& = \frac{   \cosh \trad_{m \lambda, m \tau \smod \alpha}(\epsilon)   }{ \max \left\{ \max_{n \in \NN}  \left\{ \cosh \trad_{n \lambda, n \tau \smod \alpha}(\delta) \right\} , \cdots \right\}  } \quad \text{by definition of $m$} \\
& \leq  
 \frac{  \cosh \trad_{m \lambda, m \tau \smod \alpha }(\epsilon)  }{\cosh \trad_{m \lambda,  m \tau \smod \alpha}(\delta)   } 
 \\
& =    \sqrt{ \frac{ \cosh \epsilon - \cos (m\tau \smod \alpha)}{ \cosh m\lambda - \cos (m\tau \smod \alpha)} \cdot \frac{ \cosh m\lambda - \cos (m \tau \smod \alpha)}{ \cosh \delta - \cos (m\tau \smod \alpha)}     } \quad \text{ by~\refeqn{TradDistance}}  \\
& =   \sqrt{ \frac{ \cosh \epsilon - \cos (m\tau \smod \alpha)} { \cosh \delta - \cos (m\tau \smod\alpha)}     }  \\
& \leq  \sqrt{ \frac{ \cosh \epsilon - 1} { \cosh \delta - 1} }  \quad \text{by \reflem{EasyMonotonicity}}.
\end{align*}
Observe that the first inequality is equality precisely when $m$ is also the power for $\delta$. The second inequality is equality precisely when $(m \tau \smod \alpha) = 0$: that is, when the realizing isometry has trivial rotational part.

Next, suppose that the injectivity radius of $T^\epsilon$ is realized by an elliptic isometry $\psi_{ i \alpha}$, and furthermore $0 < \alpha < \pi$. Then \reflem{EllipticWins} says that the injectivity radius of $T^\delta$ is realized by the same elliptic. Thus we may compute as above:
\begin{align*}
\frac{\cosh r_{\alpha, \lambda, \tau}(\epsilon)}{\cosh r_{\alpha, \lambda, \tau}(\delta)}
& =  
 \frac{  \cosh \trad_{0, \alpha}(\epsilon)  }{\cosh \trad_{0,  \alpha}(\delta)   } 
 \\
& =    \sqrt{ \frac{ \cosh \epsilon - \cos ( \alpha)}{ 1 - \cos ( \alpha)} \cdot \frac{ 1 - \cos ( \alpha)}{ \cosh \delta - \cos ( \alpha)}     } \quad \text{ by~\refeqn{TradDistance}}  \\
& =   \sqrt{ \frac{ \cosh \epsilon - \cos ( \alpha)} { \cosh \delta - \cos (\alpha)}     }  \\
& <  \sqrt{ \frac{ \cosh \epsilon - 1} { \cosh \delta - 1} } .
\end{align*}
The inequality is strict because $0 <  \alpha < \pi$, hence $\cos \alpha < 1$.

Finally, suppose that the injectivity radius of $T^\epsilon$ is realized by an elliptic isometry $\psi_{ i \alpha}$, and furthermore $\pi \leq \alpha < 2\pi$. Then \reflem{EllipticWins} says that the injectivity radius of $T^\delta$ is realized by the same elliptic. Furthermore, $r(\epsilon) = \epsilon/2$ and $r(\delta) = \delta/2$. Thus
\begin{align*}
\frac{\cosh^2 (r_{\alpha, \lambda, \tau}(\epsilon))}{\cosh^2 (r_{\alpha, \lambda, \tau}(\delta))}
& = \frac{\cosh^2 (\epsilon/2)}{\cosh^2 (\delta/2)} \\
& < \frac{\cosh^2 (\epsilon/2) -1}{\cosh^2 (\delta/2) -1 }, \quad \text{by \reflem{EasyMonotonicity} } \\
& = \frac{(2 \cosh^2 (\epsilon/2) -1) - 1 }{(2 \cosh^2 (\delta/2) -1) -1} \\
& = \frac{\cosh (\epsilon) - 1 }{\cosh (\delta)  -1}.
\end{align*}
Again, the inequality is strict in this case.
\end{proof}

We can now prove the upper bound of \refthm{EffectiveDistTubes}, including its sharpness. 

\begin{proposition}
\label{Prop:EffectiveDistUpper}
Suppose $0 < \delta < \epsilon$ and $0 < \alpha \leq 2\pi$. Then, in any model solid torus $N_{\alpha,\lambda,\tau}$ with $N^{\leq \delta}$ non-empty, we have
\begin{equation*}
d_{\alpha,\lambda, \tau}(\delta,\epsilon) \leq \arccosh \sqrt{ \frac{\cosh \epsilon - 1}{\cosh \delta - 1}  }.
\end{equation*}
Furthermore, 
equality holds if and only if  $(\alpha,\lambda,\tau) = (2\pi, \delta, 0)$.
\end{proposition}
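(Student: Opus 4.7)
The plan is to reduce the upper bound to \refprop{MultiplicativeRadGap} by means of a standard hyperbolic trigonometry inequality. Recall from \refdef{TubeRad} that $d_{\alpha,\lambda,\tau}(\delta,\epsilon) = r(\epsilon) - r(\delta)$, where (by \refprop{TubeRadFormula} applied to the inclusion $N^{\leq \delta} \subset N^{\leq \epsilon}$) we have $0 \leq r(\delta) \leq r(\epsilon)$. The key elementary inequality is
\[
\cosh(a - b) \leq \frac{\cosh a}{\cosh b} \qquad \text{whenever } 0 \leq b \leq a,
\]
with equality if and only if $b = 0$ or $b = a$. This follows from expanding $\cosh(a-b) = \cosh a \cosh b - \sinh a \sinh b$ and clearing $\cosh b$ to reduce to $\tanh b \leq \tanh a$; alternatively it can be cited from Appendix~\ref{Sec:Trig}.

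Setting $a = r(\epsilon)$ and $b = r(\delta)$ and then invoking \refprop{MultiplicativeRadGap} gives
\[
\cosh d_{\alpha,\lambda,\tau}(\delta,\epsilon) \;\leq\; \frac{\cosh r(\epsilon)}{\cosh r(\delta)} \;\leq\; \sqrt{\frac{\cosh \epsilon - 1}{\cosh \delta - 1}},
\]
and applying $\arccosh$ yields the claimed bound.

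For the equality claim, equality in \refprop{MultiplicativeRadGap} occurs exactly when the injectivity radii of $T^\delta$ and $T^\epsilon$ are realized by a common loxodromic isometry whose rotational part is trivial. Equality in the trigonometric step requires either $r(\delta) = r(\epsilon)$ (which forces $d = 0$ and thus $\cosh \epsilon = \cosh \delta$, contradicting $\delta < \epsilon$) or $r(\delta) = 0$. So I reduce to the case $r(\delta) = 0$. Using \refprop{TubeRadFormula}, I would then check that $r(\delta) = 0$ together with the realizing-isometry condition forces $\alpha = 2\pi$ (since a singular tube has $r(\delta) \geq \min\{\trad_{0,\alpha}(\delta), \, \delta/2\} > 0$ in both sub-cases $0 < \alpha < \pi$ and $\pi \leq \alpha < 2\pi$), $\lambda = \delta$ (since $\trad_{\lambda,0}(\delta) = 0$ iff $\lambda = \delta$), and $\tau = 0$ (so that the relevant power has trivial rotational part). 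A direct substitution into \refprop{TubeRadFormula} then confirms that $(\alpha,\lambda,\tau) = (2\pi,\delta,0)$ actually realizes equality, since $r(\delta) = 0$ and $\cosh r(\epsilon) = \sqrt{(\cosh\epsilon - 1)/(\cosh\delta - 1)}$.

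The main obstacle I anticipate is the equality analysis rather than the inequality itself: ruling out the degenerate case $r(\delta) = r(\epsilon)$, and carefully tracking the sub-cases of \refprop{MultiplicativeRadGap} in the singular setting, to confirm that no triple $(\alpha, \lambda, \tau)$ other than $(2\pi, \delta, 0)$ can make both inequalities tight simultaneously.
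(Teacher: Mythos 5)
Your proof is correct and takes essentially the same route as the paper: combine \reflem{CoshMult} with \refprop{MultiplicativeRadGap} to get the inequality, then trace equality back through both steps to force $r(\delta)=0$, which rules out singular tubes and pins down $(\alpha,\lambda,\tau)=(2\pi,\delta,0)$. The only cosmetic issue is the expression $r(\delta)\geq\min\{\trad_{0,\alpha}(\delta),\,\delta/2\}$; since \refprop{TubeRadFormula} gives $r(\delta)\geq\trad_{0,\alpha}(\delta)\geq\delta/2>0$ for every singular tube, the simpler bound $r(\delta)\geq\delta/2$ (as in the paper) suffices and is what you actually need.
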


\begin{proof}
Combining \reflem{CoshMult} and \refprop{MultiplicativeRadGap} gives
\begin{equation}
\label{Eqn:DistUpperCompute}
\cosh d_{\alpha,\lambda, \tau}(\delta,\epsilon) = \cosh \big( r_{\alpha, \lambda, \tau}(\epsilon) -  r_{\alpha, \lambda, \tau}(\delta)  \big) \leq  \frac{\cosh r_{\alpha, \lambda, \tau}(\epsilon)}{\cosh r_{\alpha, \lambda, \tau}(\delta)} \leq  \sqrt{ \frac{\cosh \epsilon - 1}{\cosh \delta - 1}  }.
\end{equation}

Next, let us analyze when equality can hold. Assuming $\delta < \epsilon$, hence $r(\delta) < r(\epsilon)$, \reflem{CoshMult} implies that the first inequality of~\refeqn{DistUpperCompute} will be equality if and only if $r(\delta) = 0$. But $r(\delta) \geq \delta/2 > 0$ when $N$ is singular, hence equality cannot hold for singular tori.

From now on, assume that $\alpha = 2\pi$, hence $N$ is nonsingular. By \reflem{CoshMult},  the first inequality of~\refeqn{DistUpperCompute} will be equality precisely when $r(\delta) = 0$, hence when $\lambda = \delta$ and the realizing isometry is a generator of $\pi_1(N) = \ZZ$. By \refprop{MultiplicativeRadGap}, the second inequality of~\refeqn{DistUpperCompute} is equality precisely when this realizing isometry has rotational part $\tau = 0$.
\end{proof}

\section{Euclidean bounds}
\label{Sec:Euclidean}

We now begin working toward the lower bound of \refthm{EffectiveDistTubes}. In that argument, we will need to estimate distances on the torus $T^\epsilon$ that forms the boundary of an $\epsilon$--tube.
The torus $T^\epsilon \subset N$ inherits a Euclidean metric, as does its preimage $\widetilde{T}^\epsilon \subset \Hhat$, and we may use that metric to measure the distance between points. In this section, we consider how the Euclidean distance between points on $\widetilde{T}^\epsilon$ relates to the actual hyperbolic distance between these points in $\Hhat$. 

For points $p$ and $q$ in an equidistant torus $\widetilde{T}^\epsilon = \widetilde{T}_r \subset \Hhat$, define $d_E(p,q)$ to be the distance between them in the Euclidean metric on $\widetilde{T}_r$. Note that if $p=(r,0,0)$ and $q=(r,\zeta,\theta)$ in cylindrical coordinates, then~\refeqn{CylindricalCoords} gives
\begin{equation}
\label{Eqn:EuclDist}
  d_E(p,q)^2 = \zeta^2\cosh^2 r + \theta^2\sinh^2 r.
\end{equation}

\begin{lemma}
\label{Lem:EucInjectivityGeneral}
Let $\widetilde{T}_r \subset \Hhat$ be a plane at fixed distance $r > 0$ from the singular geodesic $\hat{\sigma}$.
Let $p,q \in \widetilde{T}_r$ be points whose $\theta$--coordinates differ by at most $A \leq \pi$ and whose $\zeta$--coordinates differ by at most $B$. Then
\[
\frac{1 - \cos A}{A^2} \, d_E (p,q)^2 \leq \cosh d(p,q) - 1 \leq \frac{\cosh B - 1}{B^2} \, d_E (p,q)^2.
\]
\end{lemma}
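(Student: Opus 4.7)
My proof proposal:

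The plan is to reduce the inequalities to monotonicity facts about two elementary single-variable functions. By translating in $\zeta$ and rotating in $\theta$, I can place $p = (r,0,0)$ and $q = (r,\zeta,\theta)$ with $|\zeta| \leq B$ and $|\theta| \leq A \leq \pi$. Using \reflem{TubeTransDistance}, I would then rewrite
\[
\cosh d(p,q) - 1 = (\cosh\zeta - 1)\cosh^2 r + (1 - \cos\theta)\sinh^2 r,
\]
which is the exact analogue of the Euclidean formula~\refeqn{EuclDist}:
\[
d_E(p,q)^2 = \zeta^2 \cosh^2 r + \theta^2 \sinh^2 r.
\]
The two inequalities in the lemma now have the shape ``linear combinations with the same nonnegative weights $\cosh^2 r$ and $\sinh^2 r$,'' so it suffices to compare the $\cosh^2 r$--terms and the $\sinh^2 r$--terms separately.

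The key analytic input is the behavior of the two functions $f(t) = (\cosh t - 1)/t^2$ and $g(s) = (1 - \cos s)/s^2$. From their Taylor expansions $f(t) = \tfrac12 + \tfrac{t^2}{24} + \cdots$ and $g(s) = \tfrac12 - \tfrac{s^2}{24} + \cdots$, one sees that $f$ is strictly increasing in $|t|$ while $g$ is strictly decreasing in $|s|$ on $[0,\pi]$; the latter monotonicity follows by checking that $s\sin s - 2(1-\cos s) \leq 0$ on $[0,\pi]$ via the half-angle identity $1-\cos s = 2\sin^2(s/2)$. In particular, both functions are bracketed by the common value $\tfrac12$: one has $g(s) \leq \tfrac12 \leq f(t)$ for all admissible $s,t$.

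For the upper bound, the monotonicity of $f$ gives $\cosh\zeta - 1 \leq \frac{\cosh B - 1}{B^2}\zeta^2$, and the crossover at $\tfrac12$ gives $1 - \cos\theta \leq \tfrac12\theta^2 \leq \frac{\cosh B - 1}{B^2}\theta^2$. Multiplying by $\cosh^2 r$ and $\sinh^2 r$ respectively and adding yields
\[
\cosh d(p,q) - 1 \leq \frac{\cosh B - 1}{B^2}\, d_E(p,q)^2.
\]
For the lower bound, the monotonicity of $g$ on $[0,\pi]$ (here one needs $A \leq \pi$) gives $1 - \cos\theta \geq \frac{1-\cos A}{A^2}\theta^2$, while the crossover gives $\cosh\zeta - 1 \geq \tfrac12\zeta^2 \geq \frac{1-\cos A}{A^2}\zeta^2$; adding the weighted inequalities yields the left-hand bound. (If $A = 0$ or $B = 0$, one uses the limiting value $\tfrac12$.)

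I do not expect a serious obstacle here; the only slightly nonobvious ingredient is the monotonicity of $g(s) = (1-\cos s)/s^2$ on $[0,\pi]$, but this is a short calculus check and, in the context of this paper, may already appear in Appendix~\ref{Sec:Trig} alongside \reflem{CoshTaylorApprox}, in which case I would simply quote it.
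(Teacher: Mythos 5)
Your proof is correct and follows essentially the same approach as the paper: both reduce to the identity $\cosh d(p,q) - 1 = (\cosh\zeta - 1)\cosh^2 r + (1-\cos\theta)\sinh^2 r$, compare it term by term with $d_E(p,q)^2 = \zeta^2\cosh^2 r + \theta^2\sinh^2 r$, and invoke the monotonicity chain $\frac{1-\cos A}{A^2} \leq \frac{1-\cos\theta}{\theta^2} \leq \frac12 \leq \frac{\cosh\zeta-1}{\zeta^2} \leq \frac{\cosh B-1}{B^2}$. The only difference is that the paper asserts the monotonicity of $(1-\cos s)/s^2$ without proof, whereas you supply a short calculus verification; \reflem{CoshTaylorApprox} in the appendix only treats the $\cosh$ side, so your added check is a harmless bit of extra rigor.
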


\begin{proof}
Suppose without loss of generality that the cylindrical coordinates of $p$ are $(r,0,0)$ and the coordinates of of $q$ are $(r,\zeta, \theta)$, where  $|\theta | \leq A$. We can now compute using \reflem{TubeTransDistance}:
\begin{align}
\cosh d(p,q) - 1
& =  \cosh \zeta \cosh^2 r  - \cos \theta  \sinh^2 r - 1 \nonumber \\
& =  (\cosh \zeta - 1) \cosh^2 r + (\cosh^2 r -1) - \cos \theta  \sinh^2 r \nonumber \\
& =  (\cosh \zeta - 1) \cosh^2 r + (1 - \cos \theta )  \sinh^2 r  \nonumber \\
& = \frac{\cosh \zeta - 1}{\zeta^2} (\zeta \cosh r)^2 + \frac{1 - \cos \theta}{\theta^2} (\theta \sinh r)^2.
\label{Eqn:DivideByZero}
\end{align}
If either $\zeta = 0$ or $\theta = 0$, equation~\refeqn{DivideByZero} should be interpreted by substituting the limiting value of $1/2$:
\begin{equation}
\label{Eqn:QuadraticLimit}
\lim_{\zeta \to 0}  \frac{\cosh \zeta - 1}{\zeta^2} = \frac{1}{2} = \lim_{\theta \to 0} \frac{1 - \cos \theta}{\theta^2}.
\end{equation}
The monotonicity of the functions in~\refeqn{QuadraticLimit} leads to the following chain of inequalities:
\[
\frac{1 - \cos A}{A^2} \leq \frac{1 - \cos \theta}{\theta^2}   \leq \frac{1}{2} \leq  \frac{\cosh \zeta - 1}{\zeta^2} \leq  \frac{\cosh B - 1}{B^2}. 
\]
To get the lower bound of the lemma, we return to~\refeqn{DivideByZero}, and substitute the lower bound $(1 - \cos A)/(A^2)$ for both $(1-\cos\theta)/(\theta^2)$ and $(\cosh\zeta-1)/(\zeta^2)$. The upper bound follows similarly.
\end{proof}

The last result should be interpreted in light of  \reflem{CoshTaylorApprox}: so long as $d(p,q)$ is bounded above, $(\cosh d(p,q)-1)$ is not too different from $\frac{1}{2} d(p,q)^2$. Thus  \reflem{EucInjectivityGeneral} can be interpreted as saying that the Euclidean and hyperbolic distances are quite similar. The next lemma makes this idea precise, in a special case.

\begin{lemma}
\label{Lem:EucInjectivity}
Let $\widetilde{T}_r \subset \Hhat$ be a plane at fixed distance $r > 0$ from the singular geodesic $\hat{\sigma}$.
Let $p,q \in \widetilde{T}_r$ be points whose $\theta$--coordinates differ by at most $A \leq \pi$. 
Suppose that $d(p,q) \leq 0.3$. Then
\[ 
0.6342 \, d_E(p,q) < d(p,q) < d_E(p,q).
\]
\end{lemma}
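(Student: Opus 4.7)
I would handle the two inequalities independently.

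For the upper bound $d(p,q) < d_E(p,q)$, the plan is a length comparison. Since $dr = 0$ along $\widetilde{T}_r$, the Euclidean line element $ds_E^2 = \cosh^2 r\, d\zeta^2 + \sinh^2 r\, d\theta^2$ on $\widetilde{T}_r$ coincides with the restriction of the hyperbolic line element \refeqn{CylindricalCoords}. Hence the Euclidean-straight segment from $p$ to $q$ in $\widetilde{T}_r$ is a curve in $\Hhat$ of hyperbolic length exactly $d_E(p,q)$, giving $d(p,q) \leq d_E(p,q)$. To upgrade this to strict inequality for $p\neq q$, I would note that the $r$-component of the geodesic equation in the cylindrical coordinates of \refdef{SingularModel} is $\ddot r = \cosh r \sinh r\,(\dot\zeta^2 + \dot\theta^2)$; since $r > 0$, any hyperbolic geodesic in $\Hhat$ constrained to $\widetilde{T}_r$ must satisfy $\dot\zeta = \dot\theta = 0$. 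Thus no Euclidean-straight segment between distinct points of $\widetilde{T}_r$ is a hyperbolic geodesic, forcing strict inequality.

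For the lower bound $0.6342\, d_E(p,q) < d(p,q)$, I would combine \reflem{EucInjectivityGeneral} with a Taylor-type bound on $\cosh$. Applying \reflem{EucInjectivityGeneral} with $A = \pi$ (the worst case permitted by $|\theta_1 - \theta_2| \leq \pi$) yields
\[
\cosh d(p,q) - 1 \geq \tfrac{2}{\pi^2}\, d_E(p,q)^2.
\]
The monotonicity of $x \mapsto (\cosh x - 1)/x^2$ on $(0, \infty)$ (an aspect of \reflem{CoshTaylorApprox}) together with $d(p,q) \leq 0.3$ gives the complementary bound
\[
\cosh d(p,q) - 1 \leq \tfrac{\cosh(0.3) - 1}{(0.3)^2}\, d(p,q)^2.
\]
Chaining these and rearranging,
\[
d(p,q)^2 \geq \frac{2\,(0.3)^2}{\pi^2(\cosh(0.3) - 1)}\, d_E(p,q)^2,
\]
and a direct numerical evaluation shows the right-hand coefficient equals $0.40226\ldots > (0.6342)^2 = 0.40221\ldots$, so taking square roots yields the required strict inequality.

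The main obstacle is that $0.6342$ is very nearly sharp: in the boundary configuration $\zeta_1 = \zeta_2$, $|\theta_1 - \theta_2| = \pi$, $d(p,q) = 0.3$ (i.e., $r = 0.15$), both inequalities above become equalities and $d/d_E = 2(0.15)/(\pi \sinh 0.15) \approx 0.63424$, leaving a margin of only about $0.00004$. Consequently, adequate numerical precision is essential when verifying $2(0.3)^2/(\pi^2(\cosh(0.3)-1)) > (0.6342)^2$.
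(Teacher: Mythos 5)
Your proof is correct and takes essentially the same approach as the paper. The lower bound is word-for-word the paper's argument: apply \reflem{EucInjectivityGeneral} with $A=\pi$, bound $(\cosh d - 1)/d^2$ above by its value at $d = 0.3$ via the monotonicity in \reflem{CoshTaylorApprox}, and verify numerically that $\frac{2 \cdot (0.3)^2}{\pi^2(\cosh 0.3 - 1)} \approx 0.40226 > (0.6342)^2 \approx 0.40221$; your observation that the boundary configuration $\zeta_1 = \zeta_2$, $|\theta_1-\theta_2|=\pi$, $r = 0.15$ makes both inequalities simultaneously tight is a nice sanity check. For the upper bound the paper simply cites strict convexity of equidistant tubes, whereas you derive the same fact explicitly from the restricted metric on $\widetilde T_r$ together with the $r$-component of the geodesic equation (plus uniqueness of geodesics in $\Hhat$); this is a valid, somewhat more elementary justification of the same one-line claim.
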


\begin{proof}
The upper bound $d(p,q) < d_E(p,q)$ is immediate, because tubes are strictly convex. 
For the lower bound, we use the monotonicity of the function
 $f(x) = (\cosh x - 1)/(x^2)$.
\reflem{CoshTaylorApprox}
 implies that on the interval $[0, \, 0.3]$, we have
\begin{equation}
\label{Eqn:CoshTaylorApprox}
\frac{\cosh d(p,q) - 1}{d(p,q)^2} \leq f(0.3) =  0.50376 \ldots .
\end{equation}
Similarly, $g(x) = (1 - \cos x)/(x^2)$ is strictly decreasing on $[0,\pi]$, hence (setting $A$ as in \reflem{EucInjectivityGeneral}) we get
\begin{equation}
\label{Eqn:CosTaylorApprox}
\frac{1 - \cos A}{A^2} \geq g(\pi) =  \frac{2}{\pi^2}.
\end{equation}
Plugging~\refeqn{CoshTaylorApprox} and~\refeqn{CosTaylorApprox} into  \reflem{EucInjectivityGeneral} gives
\[
0.50377 \, d(p,q)^2 \geq \cosh d(p,q) - 1 \geq \frac{1 - \cos A}{A^2} d_E (p,q)^2 \geq   \frac{2}{\pi^2} d_E (p,q)^2 ,
\]
which simplifies to the desired result.
\end{proof}

\section{The depth of a tube}
\label{Sec:TubeDepth}

The next step in the proof of \refthm{EffectiveDistTubes} is \refprop{TubeRadBound}, which provides a lower bound on the radius of an $\epsilon$--tube.  Note that by \refprop{Biringer}, the estimate of \refprop{TubeRadBound} is sharp up to multiplicative error in $\cosh r(\epsilon)$, hence up to additive error in $r(\epsilon)$.

\begin{proposition}
\label{Prop:TubeRadBound}
Let $N = N_{\alpha, \lambda, \tau}$ be a model solid torus whose core curve has cone angle $0 < \alpha \leq 2\pi$ and length $0 < \lambda \leq 2.97$.  Suppose that $N^{\leq \epsilon} \neq \emptyset$. Then the tube radius $r(\epsilon)$ satisfies
\[
\cosh r_{\alpha, \lambda, \tau}(\epsilon) \: \geq \:  \frac{\epsilon}{\sqrt{(4 \pi  / \sqrt{3}) \, \lambda }}
\: \geq \:  \frac{\epsilon}{\sqrt{7.256 \, \lambda}} \,  .
\]
\end{proposition}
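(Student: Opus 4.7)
The approach is a Euclidean circle-packing argument on the boundary torus $T = T^\epsilon$ at the edge of the $\epsilon$--tube, exploiting the fact that $T$ inherits a flat metric of area $\alpha\lambda \cosh r\sinh r$ while containing an embedded disk of Euclidean radius at least $\epsilon/2$. Set $r = r_{\alpha,\lambda,\tau}(\epsilon)$, and lift $T$ to the Euclidean plane $\widetilde T = \widetilde T_r \subset \Hhat$.

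For the area, equation~\refeqn{CylindricalCoords} shows that at fixed $r$ the plane $\widetilde T$ has area element $\cosh r \, \sinh r \, d\zeta \, d\theta$. A fundamental domain for the deck group $\langle \varphi_{\lambda+i\tau}, \psi_{i\alpha}\rangle$ has $(\zeta,\theta)$-area equal to the determinant $\lambda\alpha$, so $\area(T) = \alpha\lambda\cosh r\sinh r$. (In the nonsingular case $\alpha=2\pi$, one obtains the same formula, either via the cylinder-quotient in $\HH^3$ or the plane-quotient in $\Hhat$.) For the systole, let $p \in T^\epsilon$ and $\widetilde p \in \widetilde T$ a lift. For each nontrivial deck transformation $\eta$, \reflem{InjRadLoop} gives $d(\widetilde p, \eta\widetilde p) \geq \epsilon$. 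Since any curve on the 2-submanifold $\widetilde T$ is a curve in $\Hhat$, the induced Euclidean path distance dominates the ambient hyperbolic distance:
\[
d_E(\widetilde p, \eta\widetilde p) \;\geq\; d(\widetilde p, \eta\widetilde p) \;\geq\; \epsilon.
\]
Thus the Euclidean systole of $T$ is at least $\epsilon$, so $T$ admits an embedded Euclidean disk of radius $\epsilon/2$.

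Finally, the optimal disk-packing density in the plane is $\pi/(2\sqrt 3)$, so any flat torus containing an embedded disk of radius $\rho$ has area at least $2\sqrt 3\,\rho^2$. Combining with the area formula,
\[
\alpha\lambda\cosh r\,\sinh r \;=\; \area(T) \;\geq\; 2\sqrt 3\,(\epsilon/2)^2 \;=\; \frac{\sqrt 3}{2}\epsilon^2.
\]
Since $\alpha \leq 2\pi$ and $\sinh r \leq \cosh r$, this rearranges to $2\pi\lambda\cosh^2 r \geq \frac{\sqrt 3}{2}\epsilon^2$, that is,
\[
\cosh^2 r \;\geq\; \frac{\epsilon^2}{(4\pi/\sqrt 3)\lambda} \;\geq\; \frac{\epsilon^2}{7.256\,\lambda},
\]
as desired.

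The main subtle step is the systole bound: verifying $d_E(\widetilde p, \eta\widetilde p) \geq d(\widetilde p, \eta\widetilde p)$ for every nontrivial $\eta \in \ZZ^2$, whether loxodromic, elliptic, or mixed, regardless of whether the ambient chord from $\widetilde p$ to $\eta\widetilde p$ approaches the singular axis $\hat\sigma$. The hypothesis $\lambda \leq 2.97$ does not appear essential in the argument above and presumably reflects a technical constraint needed elsewhere in the paper, or a sharper variant obtained via the Zagier and Cao--Gehring--Martin improvements referenced in \refsec{Intro}.
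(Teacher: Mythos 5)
Your packing argument is exactly the paper's proof in the \emph{singular} case ($\alpha < 2\pi$), where it appears as \reflem{SingularTubeRad}. But the argument breaks down in the nonsingular case ($\alpha = 2\pi$), and that is the case the hypothesis $\lambda \leq 2.97$ is for.

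Here is the gap. When $\alpha = 2\pi$, the elliptic $\psi_{2\pi i}$ descends to the identity on $\HH^3$, so the meridian of the boundary torus $T_r$ --- which on the flat torus is represented by the lattice vector of length $2\pi \sinh r$ --- is homotopically \emph{trivial} in $\pi_1(N) = \ZZ$. Accordingly, \reflem{InjRadLoop}, in its nonsingular form~\refeqn{NonsingInjectivityTrans}, only bounds $d(\widetilde p, \varphi^n \widetilde p)$ for loxodromic powers $\varphi^n$, and says nothing about $\psi^m \varphi^n$ with $m \neq 0$. The Euclidean systole of $T_r$ can therefore be strictly less than $\epsilon$, and the claimed embedded disk of radius $\epsilon/2$ need not exist. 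Concretely, take $\alpha = 2\pi$, $\lambda = 0.29$, $\tau = \pi$, $\epsilon = 0.3$. Then only $n = 1$ contributes a finite $\trad$ in \refprop{TubeRadFormula}, giving
\[
r(\epsilon) = \arccosh \sqrt{ \frac{\cosh(0.3) + 1}{\cosh(0.29) + 1} } \approx 0.0384, \qquad 2\pi\sinh r(\epsilon) \approx 0.241 < 0.3 = \epsilon,
\]
so the meridian is shorter than $\epsilon$. (The conclusion of the proposition still holds here, trivially, because $\epsilon/\sqrt{7.256 \lambda} < 1$; but your packing inequality $\area(T_r) \geq \tfrac{\sqrt 3}{2}\epsilon^2$ is literally false in this example.) The paper instead handles the nonsingular case via \reflem{NonSingTubeRad}, which plugs the Cao--Gehring--Martin bound (\reflem{ZagierCGM}) into \refprop{TubeRadFormula}: there exists $m \geq 1$ with $\cosh(m\lambda) - \cos(m\tau) \leq (2\pi/\sqrt 3)\lambda$, and this is exactly where the hypothesis $\lambda \leq 2.97$ is used. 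Your remark at the end correctly flags the Zagier / Cao--Gehring--Martin input as suspicious; it is in fact essential, not an optimization.
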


The proof breaks into separate cases, depending on whether $N$ is singular or nonsingular. We will handle singular tubes first.

\begin{lemma}
\label{Lem:SingularTubeRad}
Let $N = N_{\alpha, \lambda, \tau}$ be a model solid torus whose core has cone angle $\alpha < 2\pi$. Then 
\[
\sinh 2 r_{\alpha,\lambda,\tau}(\epsilon) 
\: \geq \:  \frac{\sqrt{3} \, \epsilon^2}{ \alpha \lambda}
\: > \: \frac{\sqrt{3} \, \epsilon^2}{ 2 \pi \lambda}.
\]
\end{lemma}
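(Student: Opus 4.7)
Set $r = r_{\alpha,\lambda,\tau}(\epsilon)$. The plan is to exploit the flat structure on the equidistant torus $T_r \subset N$ and then apply the sharp two-dimensional bound on the shortest vector in a Euclidean lattice.

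First, I would use~\refeqn{CylindricalCoords} to equip $\widetilde T_r \subset \Hhat$ with its induced flat metric. Under this metric, the generators $\psi_{i\alpha}$ and $\varphi_{\lambda+i\tau}$ of the deck group act on $\widetilde T_r$ as Euclidean translations by the vectors $(0,\, \alpha\sinh r)$ and $(\lambda \cosh r,\, \tau\sinh r)$, respectively, so the associated lattice $L \subset \RR^2$ has covolume
\[
\area(T_r) \;=\; \alpha\lambda \sinh r \cosh r \;=\; \tfrac{\alpha\lambda}{2}\sinh(2r).
\]
Next, I would invoke the Hermite constant in dimension two: the shortest nonzero vector $v \in L$ satisfies $|v|^2 \leq \tfrac{2}{\sqrt 3}\, \mathrm{covol}(L)$, with equality only for hexagonal lattices. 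Applied to our $L$, this produces a nontrivial $\eta \in \ZZ^2$ with
\[
d_E(\widetilde x, \eta \widetilde x)^2 \;=\; |v|^2 \;\leq\; \frac{\alpha\lambda}{\sqrt 3}\,\sinh(2r).
\]

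The remaining step is to pass from the Euclidean bound to a hyperbolic bound. Since the flat metric on $\widetilde T_r$ comes from restricting the ambient hyperbolic metric of $\Hhat$, a straight-line Euclidean segment on $\widetilde T_r$ from $\widetilde x$ to $\eta \widetilde x$ has hyperbolic length equal to its Euclidean length; hence $d(\widetilde x, \eta \widetilde x) \leq |v|$ (cf.\ the proof of \reflem{EucInjectivity}). Combining with \reflem{InjRadLoop},
\[
\epsilon \;=\; \min_{\eta' \in \ZZ^2 \setminus \{0\}} d(\widetilde x, \eta' \widetilde x) \;\leq\; d(\widetilde x, \eta \widetilde x) \;\leq\; |v|,
\]
and squaring rearranges to $\sinh(2r) \geq \sqrt 3 \, \epsilon^2/(\alpha\lambda)$. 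The strict inequality on the right is then immediate from the hypothesis $\alpha < 2\pi$. I do not foresee a serious obstacle: the Hermite bound is classical, the lattice-covolume calculation is a one-line determinant, and the Euclidean--hyperbolic distance comparison is elementary (the intrinsic metric is the restriction of the ambient metric, so any surface path is a path in $\Hhat$ of the same length).
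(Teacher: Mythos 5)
Your proof is correct, and it takes a genuinely different (though closely related) route from the paper. The paper's proof works with the area of the equidistant torus $T_{r(\epsilon)}$: it shows that $\widetilde x$ is the center of an embedded Euclidean disk of radius $\epsilon/2$ on $\widetilde T_{r(\epsilon)}$, invokes the optimal circle-packing density $\pi/(2\sqrt{3})$ of $\RR^2$ to obtain $\area(T^\epsilon) \geq \frac{\sqrt{3}}{2}\epsilon^2$, and then compares this against the area formula $\area(T_r) = \frac{\alpha\lambda}{2}\sinh 2r$. You instead interpret the deck group acting on $\widetilde T_{r(\epsilon)}$ as a rank-two Euclidean lattice $L$ of covolume $\frac{\alpha\lambda}{2}\sinh 2r$ and apply the Hermite bound $|v|^2 \leq \frac{2}{\sqrt{3}}\mathrm{covol}(L)$ on the shortest nonzero vector. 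These two classical facts are dual to one another for lattice packings and produce the same constant, so the numerics come out identically. Both approaches also require the same Euclidean-to-hyperbolic comparison $d \leq d_E$ along $\widetilde T_r$ (the paper states the strict version coming from convexity; the non-strict version you use is all that is needed). Your version is arguably slightly cleaner because it works directly with the shortest lattice vector and avoids the intermediate step of exhibiting an embedded disk, but it gains no sharper constant; the paper's version has the pedagogical advantage of explaining \emph{why} the bound is on the order of $\epsilon^2/(\alpha\lambda)$ in terms of the area of the boundary torus. One small point worth highlighting in your write-up, which the paper also emphasizes: the fact that the deck group is $\ZZ^2$ (so that $L$ is a rank-two lattice) depends on $N$ being singular ($\alpha < 2\pi$); in the nonsingular case the deck group on $\Hhat$ has rank one and a different argument is needed, which is why the paper separates out \reflem{NonSingTubeRad}.
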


\begin{proof}
Let $T_r \subset N$ be the torus at radius $r$ from the core, and let $\widetilde T_r$ be is preimage in $\Hhat$.
By Equation~\refeqn{CylindricalCoords}, the area of $T_r$ is 
\begin{equation}
\label{Eqn:TorusArea}
\area (T_r) = \alpha \lambda \sinh r \cosh r = \frac{\alpha \lambda}{2} \sinh 2r.
\end{equation}

Now, suppose that an (arbitrary) point $x \in T_r$ has $\injrad(x) = \epsilon/2$. Let $\widetilde x \in \widetilde T_r$ be a lift of $x$. By \reflem{InjRadLoop}, every non-trivial deck transformation $\eta \in \ZZ^2 - \{0\}$ satisfies
\[
d(\widetilde x, \eta \widetilde x) \geq \epsilon.
\]
This includes elliptic isometries of $\Hhat$, because $N$ is singular.
Observe that Euclidean distance along $\widetilde T_r$ is greater than hyperbolic distance, because tubes are strictly convex (compare \reflem{EucInjectivity}). Thus 
\[
d_E (\widetilde x, \eta \widetilde x) > d(\widetilde x, \eta \widetilde x) \geq \epsilon.
\]
This means $\widetilde x$ is the center of a Euclidean disk of radius $\epsilon/2$, disjoint from all of its translates by the deck group. Projecting down to $T_r$ gives an embedded Euclidean disk of radius $\epsilon/2$.
Since a packing of $\RR^2$ by isometric disks has density at most $\pi/(2\sqrt{3})$, we have
\begin{equation}
\label{Eqn:EpsilonArea}
\area(T^\epsilon) \: \geq \: 
\frac{\pi \epsilon^2}{4}  \cdot \frac{2 \sqrt{3}}{\pi}
\: = \:
\frac{\sqrt{3}}{2}  \epsilon^2.
\end{equation}

Combining this result with Equation~\refeqn{TorusArea}, we obtain
\[
\area(T^\epsilon) = \frac{\alpha \lambda}{2} \sinh 2 r(\epsilon) \geq \frac{\sqrt{3} \, \epsilon^2}{2}.
\qedhere
\]
\end{proof}

The nonsingular case of \refprop{TubeRadBound} uses the following result by Cao, Gehring, and Martin \cite[Lemma 3.4]{CaoGehringMartin}, sharpening an earlier lemma by Zagier \cite[Page 1045]{meyerhoff}. We remark that more elementary forms of \reflem{ZagierCGM}, such as a pigeonhole principle argument due to Meyerhoff \cite[Page 1048]{meyerhoff}, suffice to show a version of \refprop{TubeRadBound}, but with a larger constant in front of $\sqrt{\lambda}$. 

\begin{lemma}
\label{Lem:ZagierCGM}
Consider a nonsingular tube whose core geodesic has  complex length $\lambda + i\tau$, where $0 < \lambda \leq 2.97$. Then there is an integer $m \geq 1$ such that
\begin{equation*}
\cosh(m \lambda) - \cos (m \tau) 
   \leq 
\frac{  2 \pi \lambda }{ \sqrt{3} } \leq 3.628 \lambda.
\end{equation*}
   \end{lemma}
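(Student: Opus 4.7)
The plan is a two-dimensional geometry-of-numbers argument in the $(\text{length}, \text{rotation})$--plane. Consider the lattice
\[
\Lambda \: = \: \ZZ \cdot (\lambda, \tau) + \ZZ \cdot (0, 2\pi) \: \subset \: \RR^2,
\]
whose covolume is $\det \Lambda = 2\pi\lambda$. Every nonzero element of $\Lambda$ has the form $(m\lambda,\, m\tau - 2k\pi)$ for some $(m,k) \in \ZZ^2 \setminus \{0\}$; after negating if necessary we may take $m \geq 1$. The goal is then to exhibit such a vector with $\cosh(m\lambda) - \cos(m\tau - 2k\pi) \leq 2\pi\lambda/\sqrt{3}$, since this quantity equals $\cosh(m\lambda) - \cos(m\tau)$.

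The natural region in which to seek this lattice point is the sublevel set
\[
R_C \: = \: \bigl\{ (x, y) \in \RR \times [-\pi, \pi] : \cosh(x) - \cos(y) \leq C \bigr\},
\]
which is centrally symmetric about the origin and, for $C$ not too large, strictly convex. A direct integration (using $\cosh x = 1 + x^2/2 + O(x^4)$ and $\cos y = 1 - y^2/2 + O(y^4)$) gives $\mathrm{area}(R_C) = 2\pi C$ to leading order in $C$. I would then invoke the optimality of the hexagonal packing of the plane: for a Euclidean disk $D$ of area $A$, the critical determinant is $\Delta(D) = A\sqrt{3}/(2\pi)$, realized by the hexagonal lattice. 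Since $R_C$ is a small perturbation of a disk for $C$ small, the same critical-lattice bound persists for $R_C$ in the relevant range. Consequently, whenever $\mathrm{covol}(\Lambda) < \Delta(R_C)$, the lattice $\Lambda$ meets $R_C$ in a nonzero point. Substituting $\mathrm{covol}(\Lambda) = 2\pi\lambda$ and $\mathrm{area}(R_C) = 2\pi C$ produces the desired bound $C = 2\pi\lambda/\sqrt{3}$.

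The main obstacle is the rigorous replacement of a Euclidean disk by the region $R_C$ in the hexagonal-packing step. A direct Taylor-expansion approach, which would try to reduce to a Euclidean disk $\{x^2 + y^2 \leq 2C\}$ inscribed in $R_C$, fails to give the sharp constant because $\cosh(x) - 1 > x^2/2$ for $x > 0$; the weaker (Minkowski) density $1/4$ would only give the constant $4$ in place of $2\pi/\sqrt{3}$. Instead, one must check that $R_C$ stays convex throughout the range, and either match the disk critical determinant directly or apply a suitable variant of the hexagonal-packing bound to $R_C$ itself. The hypothesis $\lambda \leq 2.97$ is exactly the quantitative constraint making $R_C$ convex at $C = 2\pi\lambda/\sqrt{3}$; the excluded case $m = 0$ produces vectors of Euclidean length $\geq 2\pi$, which lie well outside $R_{2\pi\lambda/\sqrt{3}}$ in this range, so the short vector extracted automatically has $m \geq 1$. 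The final numerical estimate $2\pi/\sqrt{3} \leq 3.628$ is immediate.
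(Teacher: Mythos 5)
Your outline heads in the right general direction, but the paper does not give a proof here at all---it simply cites Cao--Gehring--Martin, Lemma~3.4, after converting normalizations via the identity
\[
\left|\,2\sinh^2\!\left(\tfrac{m(\lambda+i\tau)}{2}\right)\right| \;=\; \cosh(m\lambda)-\cos(m\tau).
\]
So you are attempting a from-scratch proof where the authors rely on a citation, and your sketch has two genuine problems at the crucial step.

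First, the hexagonal-packing step is a real gap, not a routine detail. The bound $\Delta(K) \geq \tfrac{\sqrt{3}}{2\pi}\,\mathrm{area}(K)$ is special to the Euclidean disk; it is \emph{not} true for general convex symmetric regions (for a square, $\Delta = \tfrac{1}{4}\,\mathrm{area}$, which is smaller than the disk ratio). Moreover $\mathrm{area}(R_C) = 2\pi C$ only to leading order; at the top of the allowed range $C = 2\pi\lambda/\sqrt{3} \approx 10.77$, the region $R_C$ is roughly a $6 \times 2\pi$ rectangle and looks nothing like a disk. So the ``small perturbation of a disk'' heuristic cannot carry the argument. Establishing the precise lattice constant of $R_C$ is exactly the content of Cao--Gehring--Martin's lemma, and it requires a direct computation.

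Second, your claim that $\lambda \leq 2.97$ is ``exactly the quantitative constraint making $R_C$ convex'' is false. Parametrizing the right boundary by $x(y) = \arccosh(C + \cos y)$, one finds $x''(y) \leq 0$ if and only if $(A + c)(1 - Ac) \geq 0$ where $A = C + \cos y$ and $c = |\cos y|$, which reduces to $Ac \leq 1$, i.e.\ $\cosh x \cdot |\cos y| \leq 1$ along the boundary. Near $y = \pm\pi$ this forces $C \leq 2$, i.e.\ $\lambda \leq \sqrt{3}/\pi \approx 0.55$. For larger $C$ the region $R_C$ is genuinely non-convex, so the convex-body critical-determinant machinery cannot be applied to it directly. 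This is why the actual proof in Cao--Gehring--Martin treats the region as a star body (not a convex body) and computes its lattice constant explicitly; the threshold $\lambda \leq 2.97$ emerges from that computation, not from a convexity requirement.

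In short: your plan matches the spirit of the cited result, but the key lattice-constant computation is missing and the convexity claim needed to shortcut it is incorrect, so the argument as written does not close.
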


\begin{proof}
This is a restatement of \cite[Lemma 3.4]{CaoGehringMartin}.  To convert their result to the form stated above, one needs the identity
\[
\left|  2 \sinh^2 \left( \frac{m(\lambda + i \tau)}{2} \right) \right|
= \left| \cosh \big( m(\lambda + i \tau) \big) - 1 \right| = \cosh(m \lambda) - \cos (m \tau) ,
\]
which is readily verified. See \cite[Equation (3.10)]{CaoGehringMartin}.
\end{proof}

Using this, we can prove the nonsingular case of \refprop{TubeRadBound}.

\begin{lemma}
\label{Lem:NonSingTubeRad}
Let $r_{2\pi,\lambda, \tau}(\epsilon)$ be the radius of a nonsingular tube that has injectivity radius $\epsilon$. Suppose the core curve has length $\lambda \leq 2.97$.  Then, for every $\epsilon \geq \lambda$,
\[
\cosh^2 ( r_{2\pi,\lambda, \tau}(\epsilon) )\geq  \frac{\sqrt{3} \, \epsilon^2}{4 \pi  \lambda}.
\]
\end{lemma}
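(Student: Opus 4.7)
The plan is to combine the exact tube-radius formula from \refprop{TubeRadFormula} with the Zagier--Cao--Gehring--Martin bound of \reflem{ZagierCGM}. The formula expresses $r_{2\pi,\lambda,\tau}(\epsilon)$ as a maximum of translation radii $\trad_{n\lambda,\, n\tau \smod 2\pi}(\epsilon)$ over $n \in \NN$, and \reflem{ZagierCGM} supplies an integer $m \geq 1$ with $\cosh(m\lambda) - \cos(m\tau) \leq 2\pi\lambda/\sqrt{3}$. The strategy is simply to plug $n = m$ into the maximum and estimate.

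The one subtlety is that $\trad_{m\lambda,\, m\tau \smod 2\pi}(\epsilon) = -\infty$ whenever $m\lambda > \epsilon$ (see \refrem{WeirdArccosh}), while \reflem{ZagierCGM} gives no a priori control on $m\lambda$ relative to $\epsilon$. I therefore split into two cases. In the admissible case $m\lambda \leq \epsilon$, the formula~\refeqn{TradDistance}, together with the numerator bound $\cos(m\tau) \leq 1$ and the elementary inequality $\cosh\epsilon - 1 \geq \epsilon^2/2$ from the Taylor expansion of $\cosh$, gives
\[
\cosh^2 r_{2\pi,\lambda,\tau}(\epsilon) \: \geq \: \frac{\cosh\epsilon - \cos(m\tau)}{\cosh(m\lambda) - \cos(m\tau)} \: \geq \: \frac{\cosh\epsilon - 1}{2\pi\lambda/\sqrt{3}} \: \geq \: \frac{\epsilon^2/2}{2\pi\lambda/\sqrt{3}} \: = \: \frac{\sqrt{3}\,\epsilon^2}{4\pi\lambda},
\]
which is exactly the desired bound. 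Both replacements go in the correct direction for a lower bound: lowering the (positive) numerator via $\cos(m\tau) \leq 1$ and raising the (positive) denominator via \reflem{ZagierCGM} each decrease the fraction.

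In the remaining case $m\lambda > \epsilon$, the chain $\cosh\epsilon < \cosh(m\lambda) \leq 1 + 2\pi\lambda/\sqrt{3}$ holds, where the second inequality uses \reflem{ZagierCGM} together with $\cos(m\tau) \leq 1$. Combined once more with $\cosh\epsilon - 1 \geq \epsilon^2/2$, this forces $\sqrt{3}\,\epsilon^2/(4\pi\lambda) < 1 \leq \cosh^2 r_{2\pi,\lambda,\tau}(\epsilon)$, so the target inequality holds trivially. The main obstacle I had anticipated --- needing an upper bound on $m$ in \reflem{ZagierCGM} in terms of $\epsilon$ --- thus dissolves, because the potentially problematic case produces a vacuous target.
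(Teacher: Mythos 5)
Your proposal is correct and follows essentially the same route as the paper: invoke \refprop{TubeRadFormula}, specialize the maximum to the integer $m$ supplied by \reflem{ZagierCGM}, and estimate using $\cos(m\tau)\le 1$ together with $\cosh\epsilon - 1\ge\epsilon^2/2$. Your explicit case split on whether $m\lambda > \epsilon$ is a sound clarification of a point the paper treats tacitly via the $\trad = -\infty$ convention of \refrem{WeirdArccosh}: in that regime the fraction $\frac{\cosh\epsilon-\cos(m\tau)}{\cosh(m\lambda)-\cos(m\tau)}$ is less than $1 \leq \cosh^2 r$, so the chain of inequalities still goes through, which is precisely what your second case verifies directly.
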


\begin{proof}
Let $m \geq 1$ be the integer guaranteed by \reflem{ZagierCGM}. Then \refprop{TubeRadFormula} gives
\begin{align*}
\cosh^2(r_{2\pi,\lambda, \tau}(\epsilon))
&=  \max_{n \in \NN}  \, \{ \cosh^2(\trad_{n\lambda, \, n \tau \smod \alpha}(\epsilon)) \} \\
& \geq \cosh^2(\trad_{m \lambda, \, m \tau \smod \alpha}(\epsilon))  \\
  & = \frac{\cosh\epsilon - \cos(m\tau )}{\cosh(m\lambda) - \cos(m\tau )} \\
  & \geq (\cosh\epsilon - 1) \cdot \frac{\sqrt{3}}{2\pi \lambda}  \\
  & \geq \frac{\epsilon^2 }{2} \cdot \frac{\sqrt{3}}{2\pi \lambda} .  \qedhere
\end{align*}
\end{proof}

\begin{proof}[Proof of \refprop{TubeRadBound}]
If $N$ is a nonsingular tube whose core has length $\lambda \leq 2.97$, the proposition holds by  \reflem{NonSingTubeRad}. Meanwhile, if $N$ is a singular tube whose core curve has cone angle $\alpha < 2\pi$, 
\reflem{SingularTubeRad} implies
\[
\cosh^2 r(\epsilon)
> \sinh r(\epsilon) \cosh r(\epsilon) 
 = \frac{1}{2} \sinh 2 r(\epsilon) 
 \geq \frac{\sqrt{3} \, \epsilon^2}{4 \pi \lambda} . \qedhere
\]
\end{proof}

\smallskip

\section{Distance between tubes: lower bound}
\label{Sec:Lower}

In this section, we prove the lower bound of \refthm{EffectiveDistTubes}. The proof breaks into two cases: shallow and deep.
A tube is said to be \emph{shallow} if its radius is bounded above by some constant denoted $\rmax$. Similarly, a tube is said to be \emph{deep} if its radius is bounded below by some constant denoted $\rmin$. The optimal values of $\rmin$ and $\rmax$ will be determined later.

The following lemma gives a bound for shallow tubes.

\begin{lemma}
\label{Lem:LowerBoundShallow}
    Suppose that $0 < \delta < \epsilon$.  Let $N = N_{\alpha, \lambda, \tau}$ be a model solid torus, where $0 < \alpha \leq 2\pi$ and $0 < \lambda \leq \min(\delta, \, 2.97)$. Suppose as well that
 $r_{\alpha, \lambda, \tau}(\delta) \leq \rmax$, for some $\rmax>0$. Then
\[
d_{\alpha, \lambda, \tau}(\delta,\epsilon) \geq 
\arccosh \frac{\epsilon}{\sqrt{7.256 \, \delta}} - \rmax.
\]
\end{lemma}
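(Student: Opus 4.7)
The proof is essentially a two-line chain of inequalities, so my plan is more of an unpacking than a sketch. The key ingredients are already in place: \refprop{TubeRadBound} bounds $\cosh r(\epsilon)$ from below in terms of $\lambda$, and the hypothesis $r(\delta)\leq\rmax$ bounds $r(\delta)$ from above. All that remains is to combine them using the definition of $d_{\alpha,\lambda,\tau}(\delta,\epsilon)$ as a difference of tube radii \refeqn{DistDef}.

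First I would observe that the hypotheses do guarantee the relevant quantities are defined. Since $\lambda \leq \delta < \epsilon$, both $N^{\leq\delta}$ and $N^{\leq\epsilon}$ are non-empty, so $r_{\alpha,\lambda,\tau}(\delta)$ and $r_{\alpha,\lambda,\tau}(\epsilon)$ make sense as in \refdef{TubeRad}. The hypothesis $\lambda\leq 2.97$ is exactly what \refprop{TubeRadBound} requires, which gives
\[
\cosh r_{\alpha,\lambda,\tau}(\epsilon) \: \geq \: \frac{\epsilon}{\sqrt{7.256\, \lambda}}.
\]
Next, because $\lambda\leq\delta$, replacing $\lambda$ by the larger quantity $\delta$ in the denominator only decreases the right-hand side, yielding
\[
\cosh r_{\alpha,\lambda,\tau}(\epsilon) \: \geq \: \frac{\epsilon}{\sqrt{7.256\, \delta}}.
\]

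Taking $\arccosh$ of both sides (interpreting $\arccosh(x)=-\infty$ when $x<1$ as in \refrem{WeirdArccosh}, in which case the conclusion is vacuous) gives $r_{\alpha,\lambda,\tau}(\epsilon)\geq \arccosh\bigl(\epsilon/\sqrt{7.256\,\delta}\bigr)$. Combining this with the assumption $r_{\alpha,\lambda,\tau}(\delta)\leq\rmax$ and the defining Equation~\refeqn{DistDef} yields
\[
d_{\alpha,\lambda,\tau}(\delta,\epsilon) \: = \: r_{\alpha,\lambda,\tau}(\epsilon) - r_{\alpha,\lambda,\tau}(\delta) \: \geq \: \arccosh \frac{\epsilon}{\sqrt{7.256\,\delta}} - \rmax,
\]
which is the desired bound. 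There is no real obstacle here; the work is all packaged inside \refprop{TubeRadBound}, whose proof in turn relies on the Zagier--Cao--Gehring--Martin lemma in the nonsingular case and on the Euclidean area/disk-packing estimate on $T^\epsilon$ in the singular case.
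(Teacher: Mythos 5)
Your proof is correct and arrives at the same bound by essentially the same ingredients, but with a slightly different ordering that is worth noting. The paper first invokes the monotonicity of $r_{\alpha,\lambda,\tau}(\epsilon)$ as a function of the parameter $\lambda$ (a consequence of \refprop{TubeRadFormula}) to pass from $r_{\alpha,\lambda,\tau}(\epsilon)$ to $r_{\alpha,\delta,\tau}(\epsilon)$, and only then applies \refprop{TubeRadBound} to the torus $N_{\alpha,\delta,\tau}$. You instead apply \refprop{TubeRadBound} directly to $N_{\alpha,\lambda,\tau}$ and then observe that $\lambda \leq \delta$ weakens the resulting algebraic bound $\epsilon/\sqrt{7.256\,\lambda}$ to $\epsilon/\sqrt{7.256\,\delta}$. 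Your route has a small technical advantage: \refprop{TubeRadBound} requires the core length to be at most $2.97$, and the lemma's hypothesis only guarantees $\lambda \leq 2.97$, not $\delta \leq 2.97$, so applying \refprop{TubeRadBound} at core length $\lambda$ (as you do) is literally covered by the hypotheses, whereas applying it at core length $\delta$ (as the paper does) implicitly requires $\delta \leq 2.97$ as well. In the paper's downstream uses $\delta$ is always at most $0.3$, so this makes no practical difference, but your version of the argument is cleaner and needs no monotonicity lemma at all --- just the elementary fact that $\sqrt{\lambda} \leq \sqrt{\delta}$.
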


\begin{proof}
An immediate consequence of \refprop{TubeRadFormula} is that the tube radius $r_{\alpha, \lambda, \tau}(\epsilon)$ is decreasing in $\lambda$.
Combining this fact with \refprop{TubeRadBound} gives
\begin{align*}
d_{\alpha, \lambda, \tau}(\delta,\epsilon) 
&= r_{\alpha, \lambda, \tau}(\epsilon) - r_{\alpha, \lambda, \tau}(\delta) \\
&\geq r_{\alpha, \delta, \tau}(\epsilon) - r_{\alpha, \lambda, \tau}(\delta) \\
&\geq \arccosh \frac{\epsilon}{\sqrt{7.256 \, \delta}} - \rmax. \qedhere
\end{align*}
\end{proof}

The corresponding statement for deep tubes is:

\begin{lemma}
\label{Lem:LowerBoundDeep}
Suppose that $0 < \delta < \epsilon$, where $\delta \leq 0.3$. 
Let $0 < \alpha \leq 2\pi$, and let $N = N_{\alpha, \lambda, \tau}$ be a model solid torus such that  $N^{\leq \delta} \neq \emptyset$.
Suppose as well that $r_{\alpha, \lambda, \tau}(\delta) \geq \rmin > 0$.
Then 
\[
d_{\alpha, \lambda, \tau}(\delta,\epsilon) 
\: \geq \: \log \left( \frac{ \epsilon }{ \delta} \cdot 1.268 \sinh \rmin \right)  - \rmin .
\]
\end{lemma}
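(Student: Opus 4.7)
The strategy is to transfer information between the two tube boundaries via a single deck transformation, and then convert a multiplicative comparison of $\sinh r$ into an additive lower bound on $d = r(\epsilon) - r(\delta)$.

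First I would invoke \reflem{InjRadLoop} to choose a non-trivial deck transformation $\eta^\ast$ with complex length $a+ib$ (taking the rotational part $b\in[-\alpha/2,\alpha/2)$, so $|b|\leq\pi$) and a lift $\widetilde p$ of some $p\in T^\delta$ with $d(\widetilde p,\eta^\ast\widetilde p)=\delta$. Since $\delta\leq 0.3$, \reflem{EucInjectivity} yields
\[
d_E(\widetilde p,\eta^\ast\widetilde p) < \delta/0.6342.
\]
Now for any lift $\widetilde q$ of a point $q\in T^\epsilon$, the element $\eta^\ast$ is still a non-trivial deck transformation, so $d(\widetilde q,\eta^\ast\widetilde q)\geq\epsilon$; strict convexity of tubes gives $d\leq d_E$, hence $d_E(\widetilde q,\eta^\ast\widetilde q)\geq\epsilon$.

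Next I would compute both Euclidean distances in closed form using~\refeqn{EuclDist}, namely $d_E^2 = a^2\cosh^2 r + b^2\sinh^2 r$ at the respective radii $r(\delta),r(\epsilon)$, with the same $a,b$. Since $\sinh^2 r(\delta)\cosh^2 r(\epsilon) - \sinh^2 r(\epsilon)\cosh^2 r(\delta) = -\sinh(r(\epsilon)-r(\delta))\sinh(r(\epsilon)+r(\delta)) \leq 0$, cross-multiplication gives the clean scaling bound
\[
\frac{d_E(\widetilde q,\eta^\ast\widetilde q)}{d_E(\widetilde p,\eta^\ast\widetilde p)} \leq \frac{\sinh r(\epsilon)}{\sinh r(\delta)}.
\]
Chaining this with the two distance inequalities yields $\sinh r(\epsilon)/\sinh r(\delta) \geq 0.6342\,\epsilon/\delta$.

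The last move is to convert this ratio into an additive bound on $d$. Using the addition formula and $\coth r(\delta) \leq \coth \rmin$ (since $\coth$ is decreasing and $r(\delta)\geq\rmin$),
\[
\frac{\sinh r(\epsilon)}{\sinh r(\delta)} = \cosh d + \coth r(\delta)\sinh d \leq \cosh d + \coth\rmin \sinh d = \frac{\sinh(\rmin+d)}{\sinh\rmin},
\]
so $\sinh(\rmin+d) \geq 0.6342\,(\epsilon/\delta)\sinh\rmin$. Bounding $\sinh x \leq e^x/2$ and taking $\log$ then gives exactly $d \geq \log\!\big((\epsilon/\delta)\cdot 1.268\sinh\rmin\big) - \rmin$.

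The main obstacle I foresee is the degenerate case where $\alpha\in(\pi,2\pi)$ and the realizing element on $T^\delta$ is the pure elliptic $\psi$: then the $\theta$--jump equals $\alpha>\pi$ and \reflem{EucInjectivity} does not apply to $(\widetilde p,\eta^\ast\widetilde p)$. In this branch, however, \refprop{TubeRadFormula} forces $r(\delta)=\delta/2$ and likewise $r(\epsilon)\geq\epsilon/2$, so $d\geq(\epsilon-\delta)/2$; I would handle it separately by checking that the inequality $e^{\rmin+(\epsilon-\delta)/2}\geq 1.268\,(\epsilon/\delta)\sinh\rmin$ holds under the standing constraint $\rmin \leq \delta/2$, an elementary calculus exercise.
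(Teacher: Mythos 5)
Your proposal is correct and follows essentially the same route as the paper's proof: pick a deck transformation realizing the injectivity radius on $T^\delta$, compare Euclidean lengths on the two boundary tori (your cross-multiplication using $\sinh^2 r\cosh^2 R - \sinh^2 R\cosh^2 r \leq 0$ is a direct re-derivation of the upper half of \reflem{TorusProjection}), and then convert the multiplicative comparison of $\sinh r$ into an additive bound on $d$ using monotonicity (your $\coth r \leq \coth\rmin$ identity $\sinh R/\sinh r = \cosh d + \coth r\,\sinh d$ is an algebraic variant of the paper's use of the monotonicity of $e^{-r}\sinh r$; both produce the same final constant). What your write-up does that the paper's does not is flag and separately handle the case where $\alpha \in (\pi, 2\pi)$ and the realizing element on $T^\delta$ is the pure elliptic $\psi_{i\alpha}$. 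The paper's proof cites \reflem{EucInjectivity} without noting that its hypothesis $A \leq \pi$ fails in that branch (the $\theta$--jump is $\alpha > \pi$, and the ratio $d/d_E$ can drop below $0.6342$). Your branch argument is sound: \refprop{TubeRadFormula} gives $r(\delta) = \delta/2$ and $r(\epsilon) \geq \epsilon/2$, so $d \geq (\epsilon-\delta)/2$, and the inequality $e^{\rmin + (\epsilon-\delta)/2} \geq 1.268\,(\epsilon/\delta)\sinh\rmin$ indeed holds for $0 < \rmin \leq \delta/2$ and $\delta \leq 0.3$ (the function $G(\rmin) = A e^{\rmin} + B e^{-\rmin}$ with $A = e^{(\epsilon-\delta)/2} - 0.634\epsilon/\delta$, $B = 0.634\epsilon/\delta$ is positive at $\rmin=0$, is either always positive or monotonically decreasing, and $G(\delta/2) \geq e^{\epsilon/2} - 0.6364\,\epsilon > 0$ for all $\epsilon$). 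This is a genuine improvement in rigor over the proof as written.
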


To prove \reflem{LowerBoundDeep}, we need to compute how fast the Euclidean injectivity radius changes.

\begin{lemma}
\label{Lem:TorusProjection}
Let $0 < r < R$, and consider equidistant tori $T_r, T_R \subset N_{\alpha, \lambda, \tau}$. Let $c_r \subset T_r$ be a rectifiable curve, and let $c_R \subset T_R$ be the cylindrical projection of $c_r$ to $T_R$. Then
\[
\frac{\cosh R}{\cosh r} \leq \frac{\ell(c_R)}{\ell(c_r)} \leq \frac{\sinh R}{\sinh r}.
\]
\end{lemma}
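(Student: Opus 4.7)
The plan is to exploit the cylindrical coordinate metric from~\refeqn{CylindricalCoords}. Restricting $ds^2 = dr^2 + \cosh^2 r \, d\zeta^2 + \sinh^2 r \, d\theta^2$ to a fixed radius $\rho$ gives the induced Euclidean metric on $T_\rho$, namely $ds_\rho^2 = \cosh^2 \rho \, d\zeta^2 + \sinh^2 \rho \, d\theta^2$. Since the cylindrical projection $T_r \to T_R$ preserves the $(\zeta,\theta)$--coordinates, $c_R$ has the same $(\zeta,\theta)$--parametrization as $c_r$.

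First I would parametrize $c_r$ by $t \in [0,1]$ as $(r, \zeta(t), \theta(t))$, so that $c_R$ is parametrized as $(R, \zeta(t), \theta(t))$. Then at each parameter value the pointwise arc length elements satisfy
\[
\frac{ds_R^2}{ds_r^2} \: = \: \frac{\cosh^2 R \, \dot\zeta^2 + \sinh^2 R \, \dot\theta^2}{\cosh^2 r \, \dot\zeta^2 + \sinh^2 r \, \dot\theta^2}.
\]
This ratio can be written as a convex combination, namely
\[
\frac{ds_R^2}{ds_r^2} \: = \: \frac{\cosh^2 R}{\cosh^2 r} \cdot \frac{\cosh^2 r \, \dot\zeta^2}{\cosh^2 r \, \dot\zeta^2 + \sinh^2 r \, \dot\theta^2} \: + \: \frac{\sinh^2 R}{\sinh^2 r} \cdot \frac{\sinh^2 r \, \dot\theta^2}{\cosh^2 r \, \dot\zeta^2 + \sinh^2 r \, \dot\theta^2},
\]
so it is trapped between $\cosh^2 R / \cosh^2 r$ and $\sinh^2 R / \sinh^2 r$.

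The key inequality to check is the ordering: since $0 < r < R$ implies $\tanh r < \tanh R$, we have $\cosh^2 R \sinh^2 r < \sinh^2 R \cosh^2 r$, hence $\cosh^2 R / \cosh^2 r \leq \sinh^2 R / \sinh^2 r$. Therefore
\[
\frac{\cosh R}{\cosh r} \: \leq \: \frac{ds_R}{ds_r} \: \leq \: \frac{\sinh R}{\sinh r}
\]
pointwise along the curve. Integrating $ds_R = (ds_R/ds_r) \, ds_r$ against the arc length measure on $c_r$ yields the claimed bounds on $\ell(c_R)/\ell(c_r)$.

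There is no real obstacle here; the argument is essentially a direct calculation with the metric~\refeqn{CylindricalCoords}. The only nontrivial observation is the convex combination representation, which immediately pins the ratio between the two extremal values coming from the pure--$\zeta$ and pure--$\theta$ directions.
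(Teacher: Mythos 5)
Your proof is correct and follows essentially the same route as the paper: parametrize $c_r$ and its projection $c_R$ by the shared $(\zeta,\theta)$--coordinates, compare the pointwise line elements coming from~\refeqn{CylindricalCoords}, and integrate. The only cosmetic difference is that you observe the ratio $ds_R^2/ds_r^2$ is a convex combination of $\cosh^2 R/\cosh^2 r$ and $\sinh^2 R/\sinh^2 r$ and verify their ordering directly via monotonicity of $\tanh$, whereas the paper factors the larger (resp.\ smaller) ratio out of the square root and cites \reflem{SinhCoshGrowth} for the comparison $\cosh R/\cosh r \leq \sinh R/\sinh r$; both amount to the same pointwise estimate.
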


\begin{proof}
Let $(r,\zeta(t),\theta(t))$ be a parametrization of $c_r$, where $t \in [0,1]$. By~\refeqn{CylindricalCoords}, the distance element on $T_r$ satisfies
\begin{equation*}
ds^2 = \cosh^2 r \, d\zeta^2 + \sinh^2 r \, d\theta^2 .
\end{equation*}
Thus we may compute that
\[
\ell(c_r) = \int_0^1 \, ds = \int_0^1 \sqrt{\cosh^2 r \left( \frac{d\zeta}{dt} \right)^2 + \sinh^2 r \left( \frac{d\theta}{dt} \right)^2 } dt .
\]
Similarly, 
\begin{align*}
\ell(c_R)  &= \int_0^1 \sqrt{\cosh^2 R \left( \frac{d\zeta}{dt} \right)^2 + \sinh^2 R \left( \frac{d\theta}{dt} \right)^2 } dt \\
& =  \int_0^1  \sqrt{\frac{\cosh^2 R}{\cosh^2 r} \cdot \cosh^2 r \left( \frac{d\zeta}{dt} \right)^2 + \frac{\sinh^2 R}{\sinh^2 r} \cdot \sinh^2 r \left( \frac{d\theta}{dt} \right)^2 } dt \\
& \geq \int_0^1 \frac{\cosh R}{\cosh r} \, \sqrt{\cosh^2 r \left( \frac{d\zeta}{dt} \right)^2 + \sinh^2 r \left( \frac{d\theta}{dt} \right)^2 } dt \\
& = \frac{\cosh R}{\cosh r} \, \ell(c_r),
\end{align*}
where the inequality in the next-to-last line is \reflem{SinhCoshGrowth}.

A similar computation proves the other inequality in the lemma.
\end{proof}

\begin{proof}[Proof of \reflem{LowerBoundDeep}]
Let $x \in T^\delta \subset N$. Then, by \reflem{InjRadLoop}, 
there is a lift $\widetilde{x} \in \Hhat$ and a deck transformation $\eta$ such that $d(\widetilde{x}, \eta \widetilde{x}) = \delta$. 
Furthermore, $\eta$ is a loxodromic if $N$ is nonsingular.
The points $\widetilde{x}, \eta (\widetilde{x})$ are connected by a Euclidean geodesic arc $\widetilde{c}^\delta \subset \widetilde{T}^\delta$.
Projecting back down to $N$, we have a Euclidean geodesic $c^\delta$ whose length satisfies
\begin{equation}
\label{Eqn:DeltaLoopLength}
0.634 \, \ell(c^\delta) = 0.634 \, d_E(\widetilde{x}, \eta \widetilde{x}) < \delta ,
\end{equation}
where the inequality is by \reflem{EucInjectivity}. 

Let $c^\epsilon \subset T^\epsilon$ be the cylindrical projection of $c^\delta$ to $T^\epsilon$. Since every point of $T^\epsilon$ has injectivity radius $\epsilon/2$, we know that $\ell(c^\epsilon) \geq \epsilon$. 
Therefore,
\[
\frac{\epsilon}{\delta}
 < \: \frac{\ell(c^\epsilon)}{ 0.634 \, \ell(c^\delta) } 
 \leq \: \frac{\sinh r(\epsilon)}{0.634 \, \sinh r(\delta)} 
 < \: \frac{e^{r(\epsilon)}}{2 \cdot 0.634 }
\cdot  \frac{e^{-r(\delta)} \cdot e^{r(\delta)}  }{ \sinh r(\delta)} 
 \leq \: \frac{e^{r(\epsilon)-r(\delta)}}{1.268} \cdot \frac{ e^{\rmin}  }{ \sinh \rmin} .
\]
Here, the first inequality uses~\refeqn{DeltaLoopLength}, the second inequality is \reflem{TorusProjection}, the third inequality uses the definition of $\sinh r$, and the fourth inequality uses the monotonicity of $e^{-r} \sinh r$.
Taking logarithms completes the proof.
\end{proof}

To complete the proof of \refthm{EffectiveDistTubes}, we need one more elementary lemma.

\begin{lemma}
\label{Lem:ComparisonFunction}
Consider the function
\[
j(\delta, \epsilon) = \frac{1}{1.268} \left( \sqrt{ \frac{\delta}{7.256} } + \sqrt{ \frac{\delta}{7.256} - \frac{\delta^2}{\epsilon^2} } \right) .
\]
On the domain $Q = \{  (\delta, \epsilon) : 0 \leq \epsilon \leq 0.3, \, 0 \leq \delta \leq \epsilon^2 / 7.256  \}$, the function satisfies
$j(\delta,\epsilon) \leq 0.0424$.
\end{lemma}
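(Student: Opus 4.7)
The plan is to reduce the two-variable optimization to a single-variable one by a clean substitution, then do calculus.

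First, I substitute $t = 7.256\,\delta/\epsilon^{2}$, which ranges over $[0,1]$ on the domain $Q$. With this substitution, $\delta/7.256 = t\epsilon^{2}/7.256^{2}$ and $\delta^{2}/\epsilon^{2} = t^{2}\epsilon^{2}/7.256^{2}$, so
\[
j(\delta,\epsilon) \;=\; \frac{\epsilon}{1.268 \cdot 7.256}\Bigl(\sqrt{t} + \sqrt{t-t^{2}}\,\Bigr) \;=\; \frac{\epsilon}{1.268 \cdot 7.256}\,g(t),
\]
where $g(t) = \sqrt{t}\bigl(1+\sqrt{1-t}\bigr)$ on $[0,1]$. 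The prefactor is increasing in $\epsilon$ and independent of $t$, while $g(t)$ is independent of $\epsilon$. Hence the maximum of $j$ on $Q$ is attained by substituting $\epsilon = 0.3$ and the maximum value of $g$.

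Next, I maximize $g$. Differentiating gives
\[
g'(t) \;=\; \frac{\sqrt{1-t} + 1 - 2t}{2\sqrt{t(1-t)}},
\]
so interior critical points satisfy $\sqrt{1-t} = 2t-1$. This forces $t \geq 1/2$, and squaring yields $4t^{2} = 3t$, i.e., $t = 3/4$, which does satisfy the original equation. Comparing with the boundary values $g(0)=0$ and $g(1)=1$, the maximum is
\[
g(3/4) \;=\; \sqrt{\tfrac{3}{4}}\bigl(1+\sqrt{\tfrac{1}{4}}\bigr) \;=\; \tfrac{\sqrt{3}}{2}\cdot\tfrac{3}{2} \;=\; \tfrac{3\sqrt{3}}{4}.
\]

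Combining the two steps,
\[
j(\delta,\epsilon) \;\leq\; \frac{0.3}{1.268\cdot 7.256}\cdot\frac{3\sqrt{3}}{4} \;=\; \frac{0.9\sqrt{3}}{4 \cdot 1.268 \cdot 7.256}.
\]
A direct numerical check gives $4\cdot 1.268 \cdot 7.256 = 36.802\ldots$ and $0.9\sqrt{3} = 1.5588\ldots$, so the upper bound is $0.04236\ldots < 0.0424$, which is the desired inequality.

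There is no real obstacle here; the only thing to be careful about is the substitution — one must verify that the square root $\sqrt{\delta/7.256 - \delta^{2}/\epsilon^{2}}$ is real on $Q$, which is exactly the content of the constraint $\delta \leq \epsilon^{2}/7.256$ (equivalently $t \leq 1$), and that the prefactor monotonicity in $\epsilon$ is genuinely independent of the constraint boundary (which it is, because the $t$-range is $[0,1]$ regardless of $\epsilon$).
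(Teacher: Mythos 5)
Your proof is correct and follows essentially the same strategy as the paper's: reduce to $\epsilon = 0.3$ by monotonicity in $\epsilon$, then solve a one-variable optimization by calculus. Your substitution $t = 7.256\,\delta/\epsilon^2$ is a nice refinement — it decouples the two variables cleanly and produces the exact critical value $g(3/4) = 3\sqrt{3}/4$, whereas the paper simply reports the numerical location $\delta = 0.0093026\ldots$ and value $0.042357\ldots$ without a closed form.
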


\begin{proof}
It is clear from the definition that $j(\delta, \epsilon)$ is increasing in $\epsilon$. Thus the maximum over $Q$ occurs on the arc of $\bdy Q$ where $\epsilon = 0.3$. On this arc, we compute $\frac{\partial}{\partial \delta} j(\delta, \epsilon)$ and find that $j(\delta, \epsilon)$ has a single interior critical point at $\delta = 0.0093026 \ldots$, with maximal value $j(\delta, \epsilon) = 0.042357 \ldots$.
\end{proof}

We can now complete the proof of the main theorem.

\begin{proof}[Proof of \refthm{EffectiveDistTubes}]
The upper bound of the theorem is proved in \refprop{EffectiveDistUpper}.

For the lower bound, suppose that $0 < \delta < \epsilon \leq 0.3$, and set $\rmax = \rmin = 0.0424$. 
Suppose that $N = N_{\alpha,\lambda,\tau}$ is a model solid torus such that $\lambda \leq \delta$.
By \reflem{LowerBoundEasy}, we have $ d_{\alpha,\lambda, \tau}(\delta,\epsilon) \geq (\epsilon - \delta)/ 2$. Thus it remains to show that
\begin{equation}
\label{Eqn:DesiredBoundTubes}
d_{\alpha,\lambda, \tau}(\delta,\epsilon)
\geq
\arccosh \frac{\epsilon}{\sqrt{7.256 \, \delta}} - \rmin.
\end{equation}
If $ \sqrt{7.256 \, \delta} > \epsilon$, our convention (see \refrem{WeirdArccosh}) is that $\arccosh(\epsilon/\sqrt{7.256 \, \delta}) = - \infty$, 
hence~\refeqn{DesiredBoundTubes} holds trivially.
If $r(\delta) \leq \rmin$, the desired lower bound of~\refeqn{DesiredBoundTubes} holds by \reflem{LowerBoundShallow}. 
From now on, we assume that  $r(\delta) \geq \rmin = 0.0424$ and $ \delta \leq \epsilon^2 / 7.256 \leq 0.3^2 / 7.256$. This means the hypotheses of both \reflem{LowerBoundDeep} and  \reflem{ComparisonFunction} are satisfied.

We will use  \reflem{ComparisonFunction} to show that the lower bound of \reflem{LowerBoundDeep} is stronger than~\refeqn{DesiredBoundTubes}. We compute as follows, starting from  \reflem{ComparisonFunction}:

\begin{align*}
0.0424 = \rmin
&\geq \frac{1}{1.268} \left( \sqrt{ \frac{\delta}{7.256} } + \sqrt{ \frac{\delta}{7.256} - \frac{\delta^2}{\epsilon^2} } \right)  \\
 \sinh \rmin
& \geq  \frac{1}{1.268} \cdot \frac{\delta}{\epsilon} \left( \sqrt{ \frac{\epsilon^2}{7.256 \delta} } + \sqrt{ \frac{\epsilon^2}{7.256 \delta} - 1 } \right) \\
 \frac{\epsilon }{\delta} \cdot 1.268 \sinh \rmin
& \geq \sqrt{ \frac{\epsilon^2}{7.256 \delta} } + \sqrt{ \frac{\epsilon^2}{7.256 \delta} - 1 } \\
 \log \left(  \frac{\epsilon }{\delta} \cdot 1.268  \sinh \rmin  \right) - \rmin
& \geq \log \left( \sqrt{ \frac{\epsilon^2}{7.256 \delta} } + \sqrt{ \frac{\epsilon^2}{7.256 \delta} - 1 } \right) - \rmin \\
 \log \left(  \frac{\epsilon }{\delta} \cdot 1.268  \sinh \rmin  \right) - \rmin
& \geq \arccosh \frac{\epsilon}{\sqrt{7.256 \, \delta}} - \rmin.
\end{align*}
Thus, by \reflem{LowerBoundDeep}, 
the desired lower bound~\refeqn{DesiredBoundTubes} holds. 
\end{proof}

\begin{remark}
\label{Rem:LargeEpsilon}
In the above proof, the constant $\rmin = 0.0424$  is (a slight overestimate for) the maximum of the function $j(\delta,\epsilon)$ from \reflem{ComparisonFunction}. The maximum of $j(\delta, \epsilon)$ on its domain is attained when $\epsilon$ takes the maximal value $0.3$, and is the primary reason why the additive constant $-0.0424$ in the statement of \refthm{EffectiveDistTubes} depends on the upper bound for $\epsilon$.
Meanwhile, the multiplicative constant $7.256$ in the statement of \refthm{EffectiveDistTubes}  is a slight overestimate for $\sqrt{4 \pi  / \sqrt{3}}$. This multiplicative constant comes from \refprop{TubeRadBound} and \reflem{LowerBoundShallow}, where  $\delta \leq 2.97$ suffices. In particular, it carries no hypotheses on $\epsilon$.

Consequently, there is a generalization of \refthm{EffectiveDistTubes} that holds for larger values of $\epsilon$.
\end{remark}

\begin{theorem}
\label{Thm:EffectiveDistGeneral}
Fix a positive constant $ \epsilon_{\max} \leq 1.475$, and suppose that $0 < \delta < \epsilon \leq \epsilon_{\max}$.   Let $j(\delta,\epsilon)$ be the function of \reflem{ComparisonFunction}, and let $j_{\max}$ be the maximal value of $j(\delta, \epsilon_{\max})$ on the interval $0 \leq \delta \leq \epsilon_{\max}^2 / 7.256$. Let $\rmin = \arcsinh(j_{\max})$.

Then, for every hyperbolic solid torus $N = N_{\alpha, \lambda, \tau}$, where  $0 < \alpha \leq 2\pi$ and $\lambda \leq \delta$,
we have
\[
\max \left\{ \frac{\epsilon - \delta}{2}, \, 
\arccosh \frac{\epsilon}{\sqrt{7.256 \, \delta}} - \rmin \right\}
\leq
d_{\alpha, \lambda, \tau}(\delta,\epsilon)
\leq
\arccosh \sqrt{ \frac{\cosh \epsilon - 1}{\cosh \delta - 1}  }.
\]
\end{theorem}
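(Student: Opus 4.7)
The proof follows the same structure as that of \refthm{EffectiveDistTubes}, with the specific numerical constant $0.0424$ replaced by the more flexible $\rmin = \arcsinh(j_{\max})$. The upper bound comes immediately from \refprop{EffectiveDistUpper}, which is valid without any restriction on $\epsilon$. The weak lower bound $(\epsilon - \delta)/2$ is \reflem{LowerBoundEasy}. What remains is to establish the bound
\[
d_{\alpha, \lambda, \tau}(\delta, \epsilon) \: \geq \: \arccosh\!\left( \frac{\epsilon}{\sqrt{7.256\, \delta}} \right) - \rmin,
\]
which holds trivially when $\sqrt{7.256\, \delta} > \epsilon$ by our convention on $\arccosh$; I therefore assume $\delta \leq \epsilon^2/7.256$ from this point on.

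The plan is to set $\rmax = \rmin = \arcsinh(j_{\max})$ and split into shallow and deep cases, just as in the specialized proof. The hypothesis $\epsilon_{\max} \leq 1.475$ is engineered precisely so that $\epsilon_{\max}^2 / 7.256 \leq 0.3$; under the assumption $\delta \leq \epsilon^2/7.256$ this forces $\delta \leq 0.3$, which is exactly the restriction needed to invoke \reflem{LowerBoundDeep}. The shallow subcase $r_{\alpha,\lambda,\tau}(\delta) \leq \rmax$ follows at once from \reflem{LowerBoundShallow}, whose hypothesis $\lambda \leq \min(\delta, 2.97)$ holds since $\lambda \leq \delta \leq 0.3$.

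In the deep subcase $r_{\alpha,\lambda,\tau}(\delta) \geq \rmin$, \reflem{LowerBoundDeep} gives
\[
d_{\alpha, \lambda, \tau}(\delta, \epsilon) \: \geq \: \log\!\left( \frac{\epsilon}{\delta} \cdot 1.268\, \sinh \rmin \right) - \rmin .
\]
The key observation is that the same chain of algebraic manipulations appearing at the end of the proof of \refthm{EffectiveDistTubes} shows that this lower bound implies our target as soon as $\sinh \rmin \geq j(\delta, \epsilon)$ holds on the region $\{(\delta, \epsilon) : 0 < \delta \leq \epsilon^2/7.256,\; 0 < \epsilon \leq \epsilon_{\max}\}$. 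Since $j(\delta, \epsilon)$ is increasing in $\epsilon$ (as in the proof of \reflem{ComparisonFunction}), its supremum on this region is the maximum of $j(\delta, \epsilon_{\max})$ on $0 \leq \delta \leq \epsilon_{\max}^2/7.256$, which is $j_{\max}$ by definition. The choice $\rmin = \arcsinh(j_{\max})$ is exactly the smallest value for which $\sinh \rmin \geq j_{\max}$, so the bound closes.

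The proof presents no substantial new obstacle beyond bookkeeping: every ingredient from \refsec{Lower} was written so that it would apply in this broader regime, and the only delicate point is isolating the numerical condition $\epsilon_{\max} \leq 1.475$ as exactly what is needed to import \reflem{LowerBoundDeep}. Any further enlargement of $\epsilon_{\max}$ would require replacing \reflem{EucInjectivity} (whose constant depends on the bound $0.3$) with a version computed at the new upper limit, and then propagating the weaker Euclidean comparison through \reflem{LowerBoundDeep}.
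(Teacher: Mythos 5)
Your proof is correct and follows the same approach as the paper's: set $\rmax = \rmin = \arcsinh(j_{\max})$, re-run the shallow/deep case split from the proof of \refthm{EffectiveDistTubes}, and observe that $\epsilon_{\max} \leq 1.475$ forces $\delta \leq 0.3$ in the nontrivial case, which is exactly what is needed to invoke \reflem{LowerBoundDeep}. The paper's own proof is a one-paragraph remark to this effect; your write-up traces the same modifications more explicitly, and your closing observation about why $\epsilon_{\max}$ cannot be enlarged without reworking \reflem{EucInjectivity} agrees with the discussion following the theorem.
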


\begin{proof}
The proof is nearly identical to the above proof of \refthm{EffectiveDistTubes}. We substitute $\epsilon_{\max}$ in place of $0.3$ and the new definition  $\rmin = \arcsinh(j_{\max})$ in place of $0.0424$. In the non-trivial case where $\delta \leq \epsilon^2/7.256$, the hypothesis $\epsilon \leq 1.475$  implies $\delta \leq 0.3$, which means \reflem{LowerBoundDeep} still applies. The final computation goes through verbatim.
\end{proof}

Finally, we observe that the hypotheses of \refthm{EffectiveDistGeneral} can be loosened even further, to $\epsilon_{\max} \leq \sqrt{7.256 \cdot 2.97} = 4.642\ldots,$ at the cost of a more complicated adjustment to the value of $\rmin$. Given $\delta_{\max} = \epsilon_{\max}^2/7.256$,  we have to adjust the statement of \reflem{LowerBoundDeep} to work for all $\delta$ in the range $0 < \delta \leq \delta_{\max}$. This will cause the multiplicative constant $1.268$ to change, which will cause the function $j(\delta,\epsilon)$ of \reflem{ComparisonFunction} to change as well. The maximum of the adjusted function will then determine the needed value of $\rmin$. 
  We expect the constants of \refthm{EffectiveDistGeneral} to suffice for our applications.

\appendix

\section{Hyperbolic trigonometry}
\label{Sec:Trig}

%
%
%

This appendix records several easy lemmas involving hyperbolic sines and cosines.

\begin{lemma}
\label{Lem:CoshMult}
Let $0 \leq r \leq s$. Then
\[
\cosh(s-r) \leq \frac{\cosh s}{\cosh r} ,
\]
with equality if and only if $r = 0$ or $r = s$.
\end{lemma}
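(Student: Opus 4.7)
The plan is to rewrite the inequality $\cosh(s-r) \leq \cosh s / \cosh r$ in the equivalent form $\cosh r \cosh(s-r) \leq \cosh s$, and then recognize the left-hand side as part of the hyperbolic cosine addition formula applied to $s = r + (s-r)$.

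Specifically, I would first apply the identity
\[
\cosh s = \cosh\bigl(r + (s-r)\bigr) = \cosh r \cosh(s-r) + \sinh r \sinh(s-r),
\]
so that
\[
\cosh s - \cosh r \cosh(s-r) = \sinh r \sinh(s-r).
\]
Next I would observe that the hypothesis $0 \leq r \leq s$ gives $\sinh r \geq 0$ and $\sinh(s-r) \geq 0$, hence the right-hand side is nonnegative. Dividing through by $\cosh r > 0$ then yields the desired inequality.

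For the equality case, the product $\sinh r \sinh(s-r)$ vanishes exactly when one of its two nonnegative factors is zero, i.e.\ when $r = 0$ or $s - r = 0$. In both cases one directly checks that equality indeed holds in the original inequality. There is no real obstacle here; the only subtlety is remembering to invoke nonnegativity of $\sinh$ on $[0,\infty)$, which is where the hypothesis $0 \leq r \leq s$ enters.
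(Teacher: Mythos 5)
Your proof is correct and follows essentially the same route as the paper's: both expand $\cosh s = \cosh(r + (s-r))$ via the addition formula and observe that the cross term $\sinh r \sinh(s-r)$ is nonnegative, vanishing exactly when $r=0$ or $r=s$.
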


\begin{proof}
Let $h = s-r$. Then 
\begin{align*}
\cosh(s) & = \cosh(r+h) \\
& = \cosh r \cosh h + \sinh r \sinh h \\
& \geq \cosh r \cosh h . 
\end{align*}
Note that we will have equality if and only if $r = 0$ or $h = 0$, as desired.
\end{proof}

\begin{lemma}
\label{Lem:SinhCoshGrowth}
Let $0 < r \leq s$. Then
\[
\frac{\cosh s}{\cosh r} \leq e^{s-r} \leq \frac{\sinh s}{\sinh r},
\]
with equality if and only if $r=s$.
\end{lemma}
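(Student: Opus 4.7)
The plan is to prove both inequalities by direct manipulation of the exponential definitions
\[
\cosh x = \tfrac{1}{2}(e^x + e^{-x}), \qquad \sinh x = \tfrac{1}{2}(e^x - e^{-x}),
\]
reducing each one to the hypothesis $r \leq s$. First I would tackle the left inequality $\cosh s / \cosh r \leq e^{s-r}$. Clearing denominators, this is equivalent to $e^s + e^{-s} \leq e^{s-r}(e^r + e^{-r}) = e^s + e^{s - 2r}$, which collapses to $e^{-s} \leq e^{s - 2r}$, i.e.\ to $2r \leq 2s$. This is the hypothesis, and the inequality becomes equality precisely when $r = s$.

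Next I would handle the right inequality $e^{s-r} \leq \sinh s / \sinh r$ in the same spirit. Assuming $r > 0$ so that $\sinh r > 0$, the inequality is equivalent to $e^{s - r}(e^r - e^{-r}) \leq e^s - e^{-s}$, that is, $e^s - e^{s - 2r} \leq e^s - e^{-s}$, which simplifies to $e^{-s} \leq e^{s - 2r}$, the same condition as before. Again equality happens iff $r = s$.

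An equivalent and slightly more conceptual route (which I would only fall back on if the algebra above were for some reason unpalatable) is to observe that $\frac{d}{dx}\log\cosh x = \tanh x \leq 1$ and $\frac{d}{dx}\log\sinh x = \coth x \geq 1$ for $x > 0$, with strict inequalities off of the degenerate values. Integrating from $r$ to $s$ gives $\log\cosh s - \log\cosh r \leq s - r \leq \log\sinh s - \log\sinh r$, and exponentiating yields the claim, with equality iff $r = s$. There is no substantive obstacle here; the only thing to be careful about is the hypothesis $r > 0$ in the right-hand inequality, which is needed so that $\sinh r$ is nonzero and the logarithm (or the division by $\sinh r$) is legal.
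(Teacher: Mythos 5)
Your proof is correct. It takes a slightly different route from the paper: the paper sets $h = s-r$ and uses the hyperbolic addition formulas, writing $\cosh s = \cosh r\cosh h + \sinh r\sinh h$ and then bounding $\sinh r \leq \cosh r$ (and analogously for $\sinh$), whereas you expand directly into exponentials and observe that both inequalities collapse to the single condition $e^{-s}\leq e^{s-2r}$. Your version has the small advantage of making it transparent that both inequalities are equivalent to the same elementary statement $r \leq s$, which also makes the equality characterization immediate; the paper's version, by contrast, foregrounds the structural reason (the addition formulas and the comparison $\sinh \leq \cosh$), which fits the style of the other trigonometric lemmas in its appendix. Your second, calculus-based argument (comparing $\tanh x$ and $\coth x$ with $1$) is also correct and would serve equally well; none of the three routes has any gap.
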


\begin{proof}
Again, let $h = s-r$. Then, as above,
\begin{align*}
\cosh(s) 
& = \cosh r \cosh h + \sinh r \sinh h \\
& \leq \cosh r \cosh h + \cosh r \sinh h \\
& = \cosh r \cdot e^h,
\end{align*}
proving the first inequality. Observe that equality holds if and only if $h = 0$.
The second inequality is proved similarly, using the sum formula for $\sinh(r+h)$.
\end{proof}

\begin{lemma}
\label{Lem:CoshTaylorApprox}
Suppose $x, y \geq 0$. Define 
\[ f(x) = 
\begin{cases}
(\cosh x - 1)/x^2, & x > 0 \\
1/2, & x = 0 \\
\end{cases}
\qquad
\text{and} \qquad 
h(x,y) = \frac {f(y)}{f(x)}.
\]
Then $f(x)$ strictly increasing in $x$, while $h(x,y)$ is increasing in $y$ and decreasing in $x$.
\end{lemma}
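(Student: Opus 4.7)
The plan is to reduce everything to monotonicity of the single-variable function $f$, and to prove that in turn by expanding $\cosh x - 1$ as a power series with nonnegative coefficients.

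First I would write the Taylor expansion $\cosh x - 1 = \sum_{n=1}^{\infty} \frac{x^{2n}}{(2n)!}$, so that for $x > 0$,
\[
f(x) \;=\; \frac{\cosh x - 1}{x^2} \;=\; \sum_{n=1}^{\infty} \frac{x^{2n-2}}{(2n)!} \;=\; \frac{1}{2} + \sum_{n=2}^{\infty} \frac{x^{2n-2}}{(2n)!}.
\]
The $n=1$ term gives the constant $1/2$, which matches the defined value $f(0) = 1/2$, so this representation is valid on all of $[0,\infty)$ and shows in particular that $f$ is continuous there. Each term $x^{2n-2}/(2n)!$ for $n \geq 2$ is strictly increasing in $x$ on $[0,\infty)$, and the $n=1$ term is constant, so the sum is strictly increasing. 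Hence $f$ is strictly increasing on $[0,\infty)$, and $f(x) \geq 1/2 > 0$ everywhere.

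The two-variable statement is then immediate. With $x$ fixed, $h(x,y) = f(y)/f(x)$ is a positive scalar multiple of $f(y)$, hence strictly increasing in $y$. With $y$ fixed, $h(x,y) = f(y) \cdot (1/f(x))$; since $f$ is strictly increasing and positive, $1/f(x)$ is decreasing in $x$, and so is $h$.

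There is no serious obstacle here; the only thing to watch is the continuity of $f$ at $0$, which the power series handles automatically. If one prefers a derivative-based argument, one can instead show $f'(x) = [x\sinh x - 2(\cosh x - 1)]/x^3 > 0$ on $x > 0$ by checking that $g(x) = x\sinh x - 2\cosh x + 2$ satisfies $g(0) = g'(0) = 0$ and $g''(x) = x\sinh x > 0$ for $x > 0$, so that $g$ is strictly positive on $(0,\infty)$.
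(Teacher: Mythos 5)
Your proof is correct and follows essentially the same approach as the paper's: expand $\cosh x - 1$ as a power series with nonnegative coefficients, observe that $f$ is a sum of a constant and strictly increasing nonnegative terms, and then deduce the monotonicity of $h$ from the monotonicity and positivity of $f$. You simply spell out a few details the paper leaves implicit (continuity at $0$, positivity of $f$, the reduction of $h$'s monotonicity to $f$'s), and you add an optional derivative-based alternative.
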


\begin{proof}
Expanding the Taylor series
\[
\cosh x= 1 + \frac{x^2}{2!} + \frac{x^4}{4!} +  \frac{x^6}{6!} + \ldots
\]
gives
\[
f(x)=  \frac{x^0}{2!} + \frac{x^2}{4!} +  \frac{x^4}{6!}+  \ldots,
\]
which is clearly increasing in $x$. The monotonicity of $h$ is now immediate.
\end{proof}

%

%

\bibliographystyle{amsplain}
\bibliography{../biblio}

\end{document}